\documentclass
 {amsart}
\usepackage{amsmath,amssymb}

\newtheorem{propo}{{\bf Proposition}}[section]
\newtheorem{coro}[propo]{{\bf Corollary}}
\newtheorem{lemma}[propo]{{\bf Lemma}}
\newtheorem{remark}[propo]{{\bf Remark}}
\newtheorem{theor}[propo]{{\bf
Theorem}} 
\newtheorem{definition}{\bf Definition}

\usepackage{comment}
\usepackage{array}
\usepackage{mathrsfs}
\usepackage{color}
 \usepackage{lmodern}
\usepackage[T1]{fontenc}
\usepackage{a4wide}
\usepackage{enumitem}
\setlist[enumerate,1]{label={\roman*)}}

\begin{document}

\title[Poisson closures and Poisson $A$-algebras]{Poisson closures and Poisson $A$-algebras}

\bigskip

\author[Amir Fernández Ouaridi]{Amir Fernández Ouaridi}
\address{Amir Fernández Ouaridi. \newline \indent University of Sevilla, Department of Geometry and Topology, Sevilla (Spain).}
\email{{\tt amir.fernandez.ouaridi@gmail.com}}

\author[Salvatore Siciliano]{Salvatore Siciliano}
\address{Salvatore Siciliano. \newline \indent University of Salento, Dipartimento di Matematica e Fisica "Ennio De Giorgi", Lecce (Italy).}
\email{{\tt salvatore.siciliano@unisalento.it}}

\author[David A. Towers]{David A. Towers}
\address{David A. Towers. \newline \indent School of
Mathematical Sciences, Lancaster University. 
Lancaster LA1 4YF (England).}
\email{{\tt d.towers@lancaster.ac.uk}}

\begin{abstract}
We study the interaction between the Lie and Poisson subalgebras of a Poisson algebra. Given a Lie subalgebra $U$ of the underlying Lie algebra $\mathcal P_L$, we consider the Poisson subalgebra generated by $U$, called its {Poisson closure}, and investigate when Lie-theoretic properties such as nilpotency and solvability are preserved under this construction. We also study the stability under Poisson closure of several distinguished Lie subalgebras, including maximal nilpotent and  solvable subalgebras, Engel and Cartan subalgebras and the derived algebra. As an application, we investigate Poisson $A$-algebras, defined by the condition that every nilpotent subalgebra is a zero algebra. In the finite-dimensional case, we obtain structural results relating Poisson $A$-algebras to Lie $A$-algebras and to the associative structure of the algebra.
\end{abstract}

\subjclass[2020]{17A30, 17B30, 17B63}
\keywords{Poisson algebras, Lie algebras, nilpotency, solvability.}

\thanks{Supported by Junta de Andalucía through the project FQM-336.}

\maketitle

\section{Introduction}
\label{sec1}
Poisson algebras lie naturally at the intersection of associative and Lie algebra theory. They consist of a commutative associative algebra together with a Lie bracket satisfying the Leibniz rule, and arise in several areas of Mathematics and Mathematical Physics. Their double algebraic structure makes it natural to study them simultaneously from the associative, Lie-theoretic and genuinely Poisson points of view.

In recent years, increasing attention has been devoted to the structure of nilpotent and solvable Poisson algebras; see, for example \cite{GR08, oo, FNT24, nsfp}. One approach is to define nilpotency and solvability intrinsically, by means of series involving both products, as introduced in \cite{pan}. This raises a natural problem: to what extent can structural information about a Poisson algebra be recovered from its underlying associative algebra $\mathcal P_A$ and Lie algebra $\mathcal P_L$? Results in this direction show that, under suitable assumptions, there is a strong connection between Poisson-theoretic properties and the corresponding properties of the underlying Lie algebra.

The aim of this paper is to pursue this question at the level of subalgebras. If $U$ is a Lie subalgebra of $\mathcal P_L$, the associative subalgebra $S_U$ generated by $U$ is, by the Leibniz rule, also closed under the Lie bracket. Thus $S_U$ is the smallest Poisson subalgebra containing $U$, and we call it the \emph{Poisson closure} of $U$. 
Our main purpose is to understand how Lie-theoretic properties behave under Poisson closure. In particular, we study when nilpotency and solvability are preserved by Poisson closure and which distinguished Lie subalgebras are already Poisson subalgebras. We consider, among others, maximal nilpotent and solvable subalgebras, Cartan and Engel subalgebras, and the derived algebra of $\mathcal P_L$. The latter leads naturally to questions concerning nil and nilpotent associative subalgebras, closely related to earlier work on solvable Poisson algebras, see \cite{S20, SU21, SU21b}.

These ideas are then applied to the study of Poisson $A$-algebras, the Poisson analogue of Lie $A$-algebras. Recall that Lie $A$-algebras are Lie algebras whose nilpotent subalgebras are abelian \cite{Aalg}. In the Poisson setting, we require nilpotent subalgebras to be zero algebras. We investigate some basic properties of this class and obtain structural results in the finite-dimensional case, showing in particular the close relationship between Poisson $A$-algebras and their Lie counterparts.

The paper is organized as follows. Section~\ref{sec2} contains the preliminaries and fixes the notation and basic results used throughout the paper. Section~\ref{sec3} is devoted to Poisson closures and their interaction with nilpotency and solvability. In Section~\ref{sec4}, we study the stability under Poisson closure of several distinguished Lie subalgebras. Finally, Section~\ref{sec5} concerns Poisson $A$-algebras and their structure.

\section{Preliminaries} \label{sec2}
\medskip
A dialgebra \((\mathcal{A},\cdot,[\cdot,\cdot])\) is a vector space \(\mathcal{A}\) equipped with two bilinear products, \(\cdot\) and \([\cdot,\cdot]\), neither of which is assumed to be associative. Important classes of dialgebras include Lie--Yamaguti algebras, Gerstenhaber algebras, Nambu--Poisson algebras, Novikov--Poisson algebras, Gelfand--Dorfman algebras, and many others. Among them, Poisson algebras constitute the principal class considered in this paper.

\begin{definition}\rm
A {Poisson algebra} $\mathcal{P}$ is a dialgebra $(\mathcal{P}, \cdot, [\cdot, \cdot])$ such that $\mathcal{P}_A:=(\mathcal{P}, \cdot)$ is an associative commutative algebra, $\mathcal{P}_L:=(\mathcal{P}, [\cdot, \cdot])$ is a Lie algebra and they satisfy the compatibility identity for $x,y,z\in \mathcal{P}$ given by
\begin{equation*}
[x\cdot y, z]=[x,z]\cdot y + x\cdot[y,z] \quad \quad \textrm{(Leibniz rule).}
\end{equation*} For convenience, we will often omit the $\cdot$ and denote the associative product simply by juxtaposition. Unless otherwise stated, we will not assume that $\mathcal{P}$ has an identity element. 
\end{definition}

\begin{definition} For a Poisson algebra $\mathcal{P}$, we denote by $P_x$ and $Q_x$ the maps in $\textrm{End}(\mathcal{P})$ given by $P_x(y) = x\cdot y$ and $Q_x(y) = [x, y]$, for $x, y \in \mathcal{P}$. 
\end{definition}

\begin{definition} For a Poisson algebra $\mathcal{P}$ we recall the associative and Lie derived series
$$\mathcal{P}_A^{(0)}=\mathcal{P}, \quad \mathcal{P}_A^{(n+1)}=\mathcal{P}_A^{(n)}\cdot \mathcal{P}_A^{(n)}\quad \hbox{and}\quad \mathcal{P}_L^{(0)}=\mathcal{P} \quad \mathcal{P}_L^{(n+1)}=[\mathcal{P}_L^{(n)},\mathcal{P}_L^{(n)}].$$ Also, we recall the associative and the Lie lower annihilating series
$$\mathcal{P}_A^1=\mathcal{P}, \quad \mathcal{P}_A^{n+1}=\mathcal{P}_A^n\cdot \mathcal{P}\quad \hbox{and} \quad \mathcal{P}_L^1=\mathcal{P}, \quad \mathcal{P}_L^{n+1}=[\mathcal{P}_L^n,\mathcal{P}].$$ We say that $\mathcal{P}$ is associative (respectively, Lie) solvable if there exists $m > 0$ such that $\mathcal{P}_A^{(m)}=0$ (respectively, $\mathcal{P}_L^{(m)}=0$) . Similarly, we can define associative and Lie nilpotency.
Moreover, we define
$$\mathcal{P}^{(0)}:=\mathcal{P} \quad \quad \quad \mathcal{P}^{(n+1)} =  \mathcal{P}^{(n)}\cdot\mathcal{P}^{(n)} + [\mathcal{P}^{(n)}, \mathcal{P}^{(n)}],\quad \hbox{and}$$
$$\mathcal{P}^1:=\mathcal{P} \quad \quad \quad \mathcal{P}^{n+1} =\mathcal{P}^{n}\cdot\mathcal{P}+ [\mathcal{P}^{n}, \mathcal{P}].$$ We say that $\mathcal{P}$ is solvable (respectively, nilpotent) if there exists $m > 0$ such that $\mathcal{P}^{(m)}=0$ (respectively, $\mathcal{P}^m=0$); also, that $\mathcal{P}$ is a zero algebra if $\mathcal{P}^{(1)}=0$.
\end{definition}

    Each term of the two mixed series is a Poisson ideal of $\mathcal P$. Thus the preceding definitions refer to solvability and nilpotency in the category of Poisson algebras.

\begin{definition}
    A Poisson algebra is called a \emph{nilalgebra} whenever it is nil as an associative algebra; that is, for every $x$ one has $x^{n(x)}=0$ for some positive integer $n(x)$.
\end{definition}

We shall use the full centre of a Poisson algebra,
$$
Z(\mathcal P):=\{z\in\mathcal P:z\mathcal P=0=[z,\mathcal P]\}.
$$
When only the centre of the underlying Lie algebra is intended, we shall write $Z(\mathcal P_L)$.

\medskip

Unless otherwise specified, all results in this paper hold in arbitrary dimension. Throughout the paper, all vector spaces and algebras are defined over a fixed field $\mathbb F$. All linear maps are assumed to be $\mathbb F$-linear, and all tensor products are taken over $\mathbb F$. Further conventions will be stated explicitly when needed.

\section{Poisson closures}\label{sec3}

Let $\mathcal{P}$ be a Poisson algebra and let $U$ be a Lie subalgebra of
$\mathcal{P}_L$. We denote by $S_U$ the associative subalgebra of
$\mathcal{P}_A$ generated by $U$. Thus
$$S_U=\sum_{k\geq 1} U_A^k = U + U\cdot U + \ldots$$
The Leibniz identity implies that $S_U$ is again a Poisson subalgebra of
$\mathcal{P}$. Hence
$S_U$ is the minimal Poisson subalgebra of $\mathcal{P}$ containing $U$. We call $S_U$ the {\it Poisson closure} of $U$ in $\mathcal{P}$.
The aim of this section is to determine when Lie-theoretic properties of
$U$, such as nilpotency or solvability, are inherited by its Poisson
closure.  

\begin{lemma}
Let $U$ be a Lie subalgebra of $\mathcal{P}_L$. Then $S_U$ is the minimal
Poisson subalgebra of $\mathcal{P}$ containing $U$.
\end{lemma}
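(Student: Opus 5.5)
We need to show that $S_U$ is closed under the Lie bracket, and that any Poisson subalgebra containing $U$ contains $S_U$. Since $S_U$ is by definition the associative subalgebra of $\mathcal P_A$ generated by $U$, it is closed under $\cdot$. It is spanned by the products $u_1u_2\cdots u_k$ with $k\ge 1$ and $u_i\in U$; commutativity and associativity make the order and bracketing irrelevant. Minimality is then immediate. If $\mathcal Q$ is a Poisson subalgebra with $U\subseteq \mathcal Q$, then $\mathcal Q$ is in particular an associative subalgebra of $\mathcal P_A$ containing $U$, so it contains every product $u_1\cdots u_k$, and hence $S_U\subseteq\mathcal Q$.

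The main step is therefore closure of $S_U$ under $[\cdot,\cdot]$. By bilinearity it suffices to show that $[u_1\cdots u_k,\,v_1\cdots v_l]\in S_U$ for all $u_i,v_j\in U$ and $k,l\ge 1$.

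First fix $w\in U$ and prove by induction on $k$ that
$$[u_1\cdots u_k, w]=\sum_{i=1}^k u_1\cdots u_{i-1}[u_i,w]u_{i+1}\cdots u_k .$$
The case $k=1$ is trivial. The inductive step applies the Leibniz rule $[xy,z]=[x,z]y+x[y,z]$ with $x=u_1\cdots u_{k-1}$ and $y=u_k$. Each summand is a product of $k$ elements of $U$, because $[u_i,w]\in U$ since $U$ is a Lie subalgebra. Hence $[S_U,U]\subseteq S_U$.

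Next, for general second argument, use antisymmetry to write $[x,v_1\cdots v_l]=-[v_1\cdots v_l,x]$ with $x\in S_U$. Induct on $l$, again via the Leibniz rule:
$$[v_1\cdots v_l,x]=[v_1\cdots v_{l-1},x]\,v_l+v_1\cdots v_{l-1}[v_l,x].$$
The term $[v_l,x]=-[x,v_l]$ lies in $S_U$ by the first step. The term $[v_1\cdots v_{l-1},x]$ lies in $S_U$ by the induction hypothesis. Since $S_U$ is closed under multiplication and contains $v_l$ and $v_1\cdots v_{l-1}$, both products lie in $S_U$. Hence $[S_U,S_U]\subseteq S_U$.

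The only point requiring care is this double induction, which extends the Leibniz rule from one product factor to arbitrary products on both sides. No identity element is needed, since every term produced has length at least $1$.
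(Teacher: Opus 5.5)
Your proposal is correct and follows essentially the same route as the paper: the Leibniz rule, applied inductively, gives closure of $S_U$ under the bracket, and minimality is immediate. The paper compresses your double induction into the graded inclusion $[U_A^i,U_A^j]\subseteq U_A^{i+j-1}$, which your argument also yields if you track lengths.
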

\begin{proof}
     Since $S_U$ is, by definition, an associative subalgebra of $\mathcal{P}_A$,
it remains to show that it is closed under the Lie bracket. By the Leibniz
identity, we have $[U_A^i,U_A^j]\subseteq U_A^{i+j-1}$ 
for all $i,j\geq 1$. It follows $[S_U,S_U]\subseteq S_U$.
Moreover, if $B$ is any Poisson subalgebra of $\mathcal{P}$ containing $U$,
then $B$ contains all associative products of elements of $U$. Thus
$S_U\subseteq B$. Therefore $S_U$ is the minimal Poisson subalgebra of
$\mathcal{P}$ containing $U$.
\end{proof}

\begin{lemma}\label{sub} Let $U$ be a maximal abelian subalgebra of $\mathcal{P}_L$. Then $U$ is a subalgebra of $\mathcal{P}$. 
\end{lemma}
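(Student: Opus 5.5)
The plan is to show that $U$ is closed under the associative product, using maximality of $U$ among abelian subalgebras of $\mathcal P_L$. Since $U$ is already a Lie subalgebra, this is all that is needed for $U$ to be a Poisson subalgebra.

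The key observation is that the Lie centralizer $C_{\mathcal P_L}(U)=\{x\in\mathcal P:[x,U]=0\}$ equals $U$. Indeed, $U\subseteq C_{\mathcal P_L}(U)$ because $U$ is abelian. Conversely, suppose $x\in C_{\mathcal P_L}(U)$. Then $U+\mathbb F x$ is abelian: brackets within $U$ vanish, $[x,U]=0$, and $[x,x]=0$. By maximality, $U+\mathbb F x=U$, so $x\in U$. Hence $U=C_{\mathcal P_L}(U)$.

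Next, take $a,b\in U$ and show $ab\in C_{\mathcal P_L}(U)$. For any $u\in U$, the Leibniz rule gives
$$[ab,u]=[a,u]b+a[b,u]=0,$$
since $[a,u]=[b,u]=0$. Thus $ab$ centralizes $U$, and by the previous step $ab\in U$. Therefore $U\cdot U\subseteq U$, so $U$ is an associative subalgebra as well as a Lie subalgebra, that is, a subalgebra of $\mathcal P$. Equivalently, $S_U=U$.

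There is no real obstacle here. The only point needing care is the self-centralizing property, which depends on adjoining a single element $x$ with $[x,x]=0$; this is automatic in any Lie algebra, so the argument holds over any field and in arbitrary dimension.
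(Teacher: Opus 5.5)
Your proof is correct, and it takes a slightly different route from the paper's proof of this lemma.

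The paper never uses the centralizer. It checks directly, with the Leibniz rule, that $U+U\cdot U$ is again an abelian Lie subalgebra. This needs both $[U,U\cdot U]\subseteq U\cdot[U,U]=0$ and $[U\cdot U,U\cdot U]\subseteq U\cdot U\cdot[U,U]=0$. Maximality then forces $U\cdot U\subseteq U$.

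You replace the check on $[U\cdot U,U\cdot U]$ with the general fact that a maximal abelian subalgebra is self-centralizing. After that, a single application of the Leibniz rule, $[ab,u]=[a,u]b+a[b,u]=0$, finishes the proof.

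What your route buys is generality. It shows that every centralizer $C_{\mathcal P_L}(X)$ is closed under the associative product. This is exactly the argument the paper itself uses later, in items i) and ii) of the stability proposition in Section~\ref{sec4}.

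What the paper's version buys is that it exhibits the associative products of elements of an abelian Lie subalgebra as still commuting with everything in sight. That fits the paper's theme of Lie properties surviving Poisson closure.
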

\begin{proof} Note that $(U+U\cdot U)_L^2\subseteq U_L^2+U\cdot U_L^2+U_A^2\cdot U_L^2=0$, so $U+U\cdot U$ is an abelian subalgebra of $\mathcal{P}_L$. It follows from the maximality of $U$ that  $U\cdot U\subseteq U$, whence $U$ is a subalgebra of $\mathcal{P}$.
\end{proof}


\begin{lemma}\label{square} If ${\mathcal{P}}$ is a Poisson algebra with $\mathcal{P}_A$ nilpotent, then $\mathcal{P}_A^2$ is a nilpotent ideal of $\mathcal{P}$.
\end{lemma}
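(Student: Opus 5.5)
We need to show two things: that $\mathcal{P}_A^2$ is a Poisson ideal of $\mathcal{P}$, and that, viewed as a Poisson algebra, it is nilpotent in the mixed sense, i.e. $(\mathcal{P}_A^2)^m=0$ for some $m$.

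\emph{Ideal property.} Clearly $\mathcal{P}_A^2\cdot\mathcal{P}\subseteq\mathcal{P}_A^3\subseteq\mathcal{P}_A^2$. For the bracket, the Leibniz rule gives
$$[xy,z]=[x,z]y+x[y,z]\in\mathcal{P}_A^2 .$$
Hence $[\mathcal{P}_A^2,\mathcal{P}]\subseteq\mathcal{P}_A^2$. More generally, the same computation iterated shows $[\mathcal{P}_A^k,\mathcal{P}]\subseteq\mathcal{P}_A^k$ for every $k\ge 1$: expand a product of $k$ factors and distribute the bracket over each factor. So every term $\mathcal{P}_A^k$ is a Poisson ideal.

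\emph{Nilpotency.} Set $I=\mathcal{P}_A^2$, and let $n$ be such that $\mathcal{P}_A^n=0$. The key claim is
$$I^{j}\subseteq \mathcal{P}_A^{j+1}\quad\text{for all } j\ge1,$$
where $I^j$ is the mixed lower series of $I$ as a Poisson algebra: $I^1=I$ and $I^{j+1}=I^j\cdot I+[I^j,I]$. We prove it by induction on $j$; the case $j=1$ is the definition of $I$. Suppose $I^j\subseteq\mathcal{P}_A^{j+1}$. For the associative part, $I^j\cdot I\subseteq\mathcal{P}_A^{j+1}\cdot\mathcal{P}_A^2\subseteq\mathcal{P}_A^{j+3}$. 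For the bracket part, the point is that bracketing against an element of $I$ raises associative degree by at least one. Take $a\in\mathcal{P}_A^{j+1}$ and $xy\in I$. By the Leibniz rule,
$$[a,xy]=[a,x]y+x[a,y].$$
By the ideal property, $[a,x]$ and $[a,y]$ lie in $\mathcal{P}_A^{j+1}$. Multiplying by $y$ or by $x$ then lands in $\mathcal{P}_A^{j+2}$. This gives $I^{j+1}\subseteq\mathcal{P}_A^{j+2}$, completing the induction.

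Taking $j=n-1$ yields $I^{n-1}\subseteq\mathcal{P}_A^n=0$, so $I$ is nilpotent as a Poisson algebra. The only delicate step is the bracket part of the induction, namely the observation that bracketing with a product $xy$ gains an associative factor. This is exactly why the statement is about $\mathcal{P}_A^2$ rather than $\mathcal{P}$ itself, since $\mathcal{P}_L$ need not be nilpotent.
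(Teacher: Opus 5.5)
Your proof is correct and is essentially the paper's argument: the Leibniz rule shows that bracketing with an element of $\mathcal{P}_A^2$ raises associative degree by one, so the lower series of $\mathcal{P}_A^2$ lies in $\mathcal{P}_A^{k+1}$. The only difference is that the paper bounds just the Lie series $(\mathcal{P}_A^2)_L^k$ and then invokes \cite[Theorem 2.13]{oo} to pass from Lie and associative nilpotency to Poisson nilpotency, whereas you run the induction directly on the mixed series $I^{j+1}=I^j\cdot I+[I^j,I]$, which makes your version self-contained.
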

{
\begin{proof}
 Clearly $\mathcal{P}_A^2$ is associative nilpotent and an ideal of $\mathcal{P}$. We show that $\mathcal{P}_A^2$ is Lie nilpotent and the result will follow by \cite[Theorem 2.13]{oo}. 
Notice that the Leibniz rule implies that $[\mathcal{P}_A^r,\mathcal{P}_A^s]\subseteq \mathcal{P}_A^{r+s-1}$. 
Then we can prove by induction on  $k>1$ that 
$$(\mathcal{P}_A^2)^k_L = [(\mathcal{P}_A^2)^{k-1}_L, \mathcal{P}_A^2] \subseteq \mathcal{P}_A^{k+1}.$$
Indeed, the case $k=2$ is trivial. If it holds for $k=n$, then we have
$$(\mathcal{P}_A^2)^{n+1}_L = [(\mathcal{P}_A^2)^{n}_L, \mathcal{P}_A^2]\subseteq [\mathcal{P}_A^{n+1}, \mathcal{P}_A^2] \subseteq \mathcal{P}_A^{n+2}.$$

Hence, the associative nilpotency of  $\mathcal{P}_A^2$ implies the Lie nilpotency.
\end{proof}

}
\begin{lemma}\label{lemLs}
    Let $\mathcal{P}$ be a Poisson algebra with $\mathcal{P}_A$ nilpotent. Then any subalgebra of $\mathcal{P}$ which is Lie solvable (resp. Lie nilpotent) is solvable (resp. nilpotent). 
\end{lemma}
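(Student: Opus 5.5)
The idea is to reduce everything to Lemma~\ref{square} and to \cite[Theorem 2.13]{oo}, which (as used in the proof of Lemma~\ref{square}) says that a Poisson algebra that is both associative nilpotent and Lie nilpotent is nilpotent. Let $B$ be a Poisson subalgebra of $\mathcal P$. Since $B_A^k\subseteq \mathcal P_A^k$, the algebra $B_A$ is nilpotent.

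\emph{Nilpotent case.} If $B$ is Lie nilpotent, then $B$ is both associative nilpotent and Lie nilpotent. Hence $B$ is nilpotent directly by \cite[Theorem 2.13]{oo}.

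\emph{Solvable case.} Now suppose $B$ is Lie solvable, say $B_L^{(m)}=0$. Apply Lemma~\ref{square} to the Poisson algebra $B$: the ideal $I:=B_A^2$ is a nilpotent ideal of $B$. Since nilpotent implies solvable for the mixed series ($\mathcal P^{(n)}\subseteq \mathcal P^{n+1}$ by an easy induction), there is $k$ with $I^{(k)}=0$.

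Next consider the quotient $\bar B=B/I$. Its associative product is identically zero. So its mixed derived series reduces to the Lie derived series, $\bar B^{(n)}=\bar B_L^{(n)}$. This is the image of $B_L^{(n)}$, and therefore vanishes for $n=m$. Pulling back gives $B^{(m)}\subseteq I$.

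Finally, the mixed derived series is monotone: $X\subseteq Y$ implies $X^{(n)}\subseteq Y^{(n)}$. Applying this to $B^{(m)}\subseteq I$ yields
$$B^{(m+k)}=(B^{(m)})^{(k)}\subseteq I^{(k)}=0,$$
so $B$ is solvable.

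The only point needing care is the solvable case. There the naive approach of bounding $B^{(n)}$ directly by Lie and associative terms is messy. The extension argument via Lemma~\ref{square} avoids this, and the remaining checks (the identity $(B^{(m)})^{(k)}=B^{(m+k)}$ and that quotients preserve the series) are routine.
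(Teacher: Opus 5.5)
Your proof is correct and is essentially the paper's argument. The paper proves by induction that $U^{(k)}\subseteq U_L^{(k)}+U_A^2$, which is your observation that modulo the ideal $B_A^2$ the mixed derived series coincides with the Lie derived series; both then conclude via Lemma~\ref{square}, and the nilpotent case is handled identically by \cite[Theorem 2.13]{oo}.
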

\begin{proof}
    Let $U$ be a Lie solvable subalgebra of $\mathcal{P}$. We claim that $U^{(k)}\subseteq U_L^{(k)} + U_A^2$. This clearly holds for $k=1$. Suppose that it holds for $k=n$. Then 
\begin{align*}
U^{(n+1)}&\subseteq [U_L^{(n)} + U_A^2,U_L^{(n)} + U_A^2]+(U_L^{(n)} + U_A^2)\cdot (U_L^{(n)} + U_A^2) \\
&\subseteq U_L^{(n+1)}+[U,U_A^2]+[U_A^2,U_A^2]+U_A^2 \\
&\subseteq U_L^{(n+1)}+U\cdot [U,U]+U\cdot U\cdot [U,U]+U_A^2 \\
&\subseteq U_L^{(n+1)}+U_A^2.
\end{align*} Now, there is $n$ such that $U_L^{(n)}=0$ and so $U^{(n)}\subseteq U_A^2$. But $U_A^2$ is nilpotent, by Lemma~\ref{square}, and hence solvable. It follows that $U$ is solvable.

If $U$ is a Lie nilpotent subalgebra of $\mathcal{P}$, then it follows that it is nilpotent by \cite[Theorem 2.13]{oo}, since it is associative nilpotent.
\end{proof}

{
\begin{propo}\label{charsolv}
    Let $\mathcal{P}$ be a Poisson algebra. Then $\mathcal{P}$ is solvable if and only if $\mathcal{P}_A$ is nilpotent and $\mathcal{P}_L$ is solvable.
\end{propo}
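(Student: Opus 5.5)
The proof splits into the two implications. The reverse implication follows from Lemma~\ref{lemLs}, and the forward implication reduces to a short computation with the associative derived series.

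For the implication ($\Leftarrow$), suppose $\mathcal{P}_A$ is nilpotent and $\mathcal{P}_L$ is solvable. I will apply Lemma~\ref{lemLs} with $U=\mathcal{P}$. Indeed $\mathcal{P}$ is trivially a subalgebra of itself, it is Lie solvable by hypothesis, and $\mathcal{P}_A$ is nilpotent. The lemma therefore gives that $\mathcal{P}$ is solvable, and nothing further is needed in this direction.

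For the implication ($\Rightarrow$), suppose $\mathcal{P}^{(m)}=0$. A straightforward induction on $n$ shows that $\mathcal{P}_L^{(n)}\subseteq \mathcal{P}^{(n)}$ and $\mathcal{P}_A^{(n)}\subseteq \mathcal{P}^{(n)}$: each step of either one-product series is contained in the corresponding step of the mixed series. Hence $\mathcal{P}_L^{(m)}=0$, so $\mathcal{P}_L$ is solvable, and $\mathcal{P}_A^{(m)}=0$, so $\mathcal{P}_A$ is associative solvable.

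The only remaining point is to upgrade associative solvability of $\mathcal{P}_A$ to nilpotency. By associativity, $\mathcal{P}_A^i\cdot\mathcal{P}_A^j=\mathcal{P}_A^{i+j}$ as spans of products. Induction on $n$ then gives $\mathcal{P}_A^{(n)}=\mathcal{P}_A^{2^n}$, since
$$\mathcal{P}_A^{(n+1)}=\mathcal{P}_A^{2^n}\cdot\mathcal{P}_A^{2^n}=\mathcal{P}_A^{2^{n+1}}.$$
Thus $\mathcal{P}_A^{2^m}=0$, and $\mathcal{P}_A$ is nilpotent. This identity is the only step requiring any care, and it is routine; commutativity is not even needed for it.
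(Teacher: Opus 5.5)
Your proof is correct and follows the paper's: the converse is exactly Lemma~\ref{lemLs} applied to $U=\mathcal P$. Your forward direction spells out what the paper calls immediate, namely that each one-product derived series sits inside the mixed one, and that $\mathcal P_A^{(n)}=\mathcal P_A^{2^n}$ turns associative solvability into nilpotency.
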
}
\begin{proof}
    The forward implication is immediate. Conversely, if $\mathcal P_A$ is nilpotent and $\mathcal P_L$ is solvable, then using Lemma~\ref{lemLs} applied to $U=\mathcal P$, we deduce that $\mathcal P$ is solvable.
\end{proof}

\begin{coro} Let $\mathcal{P}$ be a Poisson algebra of finite dimension with $\mathcal{P}_A$ nilpotent. Then  $R_L$, the radical of $\mathcal{P}_L$, is a solvable ideal of $\mathcal{P}$. Likewise, $N_L$, the nilradical of $\mathcal{P}_L$, is a nilpotent ideal of $\mathcal{P}$.
\end{coro}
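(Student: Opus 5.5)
The plan is to reduce both claims to Lemma~\ref{lemLs}. That lemma applies to \emph{subalgebras} of $\mathcal P$, so the real task is to show that $R_L$ and $N_L$ are Poisson subalgebras, and in fact Poisson ideals.

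\textbf{Ideal property.} First, $R_L$ and $N_L$ are Lie ideals of $\mathcal P_L$ by definition. It remains to show they are closed under multiplication by $\mathcal P$. To do this, I will show that $R_L+\mathcal P\cdot R_L$ is a solvable Lie ideal of $\mathcal P_L$. Maximality of $R_L$ then forces $\mathcal P\cdot R_L\subseteq R_L$.

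The Lie ideal property follows from the Leibniz rule:
$$[x r,y]=[x,y]r+x[r,y]\in \mathcal P\cdot R_L \quad \text{for } x,y\in\mathcal P,\ r\in R_L.$$
Solvability is the delicate point. Since $\mathcal P_A$ is nilpotent, Lemma~\ref{square} shows that $\mathcal P_A^2$ is a nilpotent Poisson ideal of $\mathcal P$. In particular $\mathcal P_A^2$ is a Lie nilpotent ideal of $\mathcal P_L$, so $\mathcal P_A^2\subseteq N_L\subseteq R_L$. Hence
$$\mathcal P\cdot R_L\subseteq \mathcal P_A^2\subseteq R_L,$$
which already settles the multiplicative closure for $R_L$ directly, with no need for the maximality argument. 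The same inclusion $\mathcal P\cdot N_L\subseteq\mathcal P_A^2\subseteq N_L$ gives the analogous statement for $N_L$. So both are Poisson ideals of $\mathcal P$.

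\textbf{Conclusion.} Now $R_L$ is a Poisson subalgebra that is Lie solvable, so by Lemma~\ref{lemLs} it is solvable. Likewise $N_L$ is Lie nilpotent, so by Lemma~\ref{lemLs} it is nilpotent.

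\textbf{Main obstacle.} The key step is the observation that $\mathcal P_A^2\subseteq N_L$. This uses Lemma~\ref{square} together with the fact that, in finite dimension, the nilradical contains every nilpotent Lie ideal. The fact that $\mathcal P_A^2$ is a Lie ideal follows from the Leibniz rule, since $[\mathcal P_A^2,\mathcal P]\subseteq\mathcal P_A^2$.

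Finite dimensionality is needed only to guarantee that the radical and nilradical exist as the largest solvable and nilpotent ideals. For this I will use that the sum of two solvable (respectively nilpotent) ideals is again solvable (respectively nilpotent), which is the standard Fitting-type argument. No characteristic assumption is needed.
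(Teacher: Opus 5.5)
Your proof is correct and follows the paper's argument. Both use Lemma~\ref{square} (nilpotency of $\mathcal P_A^2$) to obtain $\mathcal P\cdot R_L\subseteq R_L$ and $\mathcal P\cdot N_L\subseteq N_L$, and then conclude with Lemma~\ref{lemLs}; the only difference is that the paper first checks that $R_L\cdot\mathcal P$ is a Poisson ideal lying inside the solvable algebra $\mathcal P_A^2$, whereas you note directly that the Lie ideal $\mathcal P_A^2$ lies in $N_L\subseteq R_L$, which is slightly more economical.
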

\begin{proof} First, note that $R_L\cdot \mathcal{P}\cdot \mathcal{P}\subseteq R_L\cdot \mathcal{P}$ and $$[R_L\cdot \mathcal{P},\mathcal{P}] \subseteq R_L\cdot \mathcal{P}_L^2+\mathcal{P}\cdot [R_L,\mathcal{P}]\subseteq R_L\cdot \mathcal{P},$$ so $R_L\cdot \mathcal{P}$ is an ideal of $\mathcal{P}$. Now $\mathcal{P}_A^2$ is nilpotent and hence solvable (see \cite[page 796]{oo}). It follows that $R_L\cdot \mathcal{P}$ is a solvable ideal of $\mathcal{P}$ and hence of $\mathcal{P}_L$. Thus, $R_L\cdot \mathcal{P}\subseteq R_L$, whence $R_L$ is an ideal of $\mathcal{P}$. In particular, $R_L$ is an ideal of $\mathcal{P}$ which is Lie solvable, so the result follows by Lemma~\ref{lemLs}. 

An analogous argument shows that the nilradical of $\mathcal{P}_L$ is a nilpotent ideal of $\mathcal{P}$.
\end{proof}

\begin{coro} 
Let $\mathcal{P}$ be a Poisson algebra of finite dimension with $\mathcal{P}_A$ nilpotent. Then  $R_L=R$, the radical of $\mathcal{P}$, and $N_L = N$, the nilradical of $\mathcal{P}$.
\end{coro}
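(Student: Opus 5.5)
We prove the two equalities by double inclusion, using the preceding Corollary. In finite dimension, $R$ is the largest solvable (Poisson) ideal of $\mathcal P$ and $N$ is the largest nilpotent ideal of $\mathcal P$. These exist because sums of two solvable (resp. nilpotent) Poisson ideals are again solvable (resp. nilpotent); we take this as the standard fact underlying the definitions, as in \cite{oo}.

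For $R_L\subseteq R$: by the preceding Corollary, $R_L$ is a solvable ideal of $\mathcal P$. By maximality of $R$ we get $R_L\subseteq R$. For $R\subseteq R_L$: $R$ is an ideal of $\mathcal P$, so in particular a Lie ideal of $\mathcal P_L$. Since $R$ is solvable, $R^{(m)}=0$ for some $m$, and the inclusion $R_L^{(k)}\subseteq R^{(k)}$ (clear from the definitions, since $R^{(k+1)}\supseteq[R^{(k)},R^{(k)}]$) shows that $R$ is Lie solvable. Hence $R$ is a solvable ideal of $\mathcal P_L$, and $R\subseteq R_L$.

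The nilradical is handled in the same way. The preceding Corollary gives that $N_L$ is a nilpotent ideal of $\mathcal P$, so $N_L\subseteq N$. Conversely, $N$ is a Lie ideal, and $N_L^k\subseteq N^k$ shows that it is Lie nilpotent. Therefore $N\subseteq N_L$.

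The only point needing care is the existence of $R$ and $N$ as unique maximal solvable/nilpotent Poisson ideals in finite dimension. If this is not taken as known, I would prove it directly. For solvability, if $I,J$ are solvable ideals, then $(I+J)/J\cong I/(I\cap J)$ is solvable, and Poisson solvability is closed under extensions because the mixed derived series maps onto the derived series of the quotient. For nilpotency of a sum of two nilpotent ideals, I would argue as in the Lie case: $(I+J)^n$ is spanned by mixed products containing many factors from $I$ or many from $J$. Alternatively, under the standing hypothesis that $\mathcal P_A$ is nilpotent, I would use the fact that $N_L$ itself is a nilpotent ideal containing every nilpotent Poisson ideal; this suffices to define $N$ as $N_L$ and avoids the general question.
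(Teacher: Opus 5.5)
Your proof is correct and matches the paper's argument: you get $R_L\subseteq R$ and $N_L\subseteq N$ from the preceding corollary, and the reverse inclusions from the fact that Poisson solvable (resp.\ nilpotent) ideals are Lie solvable (resp.\ Lie nilpotent). Your extra discussion of why $R$ and $N$ exist goes beyond what the paper spells out, and your remark that $R_L$ and $N_L$ are themselves solvable/nilpotent Poisson ideals containing every such ideal settles this cleanly under the standing hypothesis. One small fix: avoid writing $R_L^{(k)}$ and $N_L^k$ for the Lie series of $R$ and $N$, since this clashes with $R_L$ and $N_L$ denoting the radical and nilradical of $\mathcal P_L$.
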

\begin{proof}
By the preceding corollary, $R_L$ (respectively, $N_L$) is a solvable
(respectively, nilpotent) ideal of $\mathcal P$, so $R_L\subseteq R$ and
$N_L\subseteq N$. The reverse inclusions follow because every solvable
(respectively, nilpotent) Poisson ideal is Lie solvable (respectively, Lie
nilpotent).
\end{proof}

\begin{lemma}\label{actn}
    Let $\mathcal{P}$ be a Poisson algebra in which $\mathcal{P}_A$ is nilpotent. If $U$ is a nilpotent subalgebra of $\mathcal{P}_L$, then $Q_U$ acts nilpotently on $S_U$.
\end{lemma}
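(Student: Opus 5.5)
We need to show: there is an $N$ such that any composition $Q_{u_1}\cdots Q_{u_N}$ with $u_i\in U$ annihilates $S_U$. The plan is to filter $S_U$ by the associative powers of $U$ and show that each step in the filtration is killed by a bounded number of brackets.

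\textbf{Setup.} Since $\mathcal P_A$ is nilpotent, pick $m$ with $\mathcal P_A^m=0$. Then $U_A^k=0$ for $k\ge m$, so
$$S_U=U+U_A^2+\cdots+U_A^{m-1}.$$
Let $c$ be the nilpotency class of $U$, so that $U_L^{c+1}=0$; equivalently, any $c$-fold composition of maps $Q_u$ with $u\in U$ sends $U$ to $0$. It therefore suffices to show that for each $k$ there is $N_k$ such that any composition of $N_k$ maps $Q_{u}$, $u\in U$, annihilates $U_A^k$.

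\textbf{Key step.} Fix $u\in U$. The map $Q_u$ is a derivation of the associative product (Leibniz rule). Apply it repeatedly to a monomial $x_1\cdots x_k$ with $x_i\in U$, using a sequence $Q_{u_1},\dots,Q_{u_N}$. Expanding by the Leibniz rule, the result is a sum of products $y_1\cdots y_k$, where each $y_i$ is obtained from $x_i$ by applying some subsequence of the $Q_{u_j}$'s, and the subsequences partition $\{1,\dots,N\}$. If $N\ge k(c)+1$, then by pigeonhole some factor receives at least $c+1$ of the operators. That factor is an element of $U_L^{c+2}\subseteq U_L^{c+1}=0$, and we should recall that $U$ is Lie-closed, so each intermediate element stays in $U$. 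Hence the whole product vanishes. Thus $N_k=kc+1$ works, and more directly $N=(m-1)c+1$ kills all of $S_U$.

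\textbf{Potential obstacle.} The only point needing care is the generalized Leibniz formula for a product of several distinct derivations $Q_{u_1}\cdots Q_{u_N}$ applied to a $k$-fold associative product. It follows by induction on $N$, since each $Q_{u_j}$ is a derivation of $\mathcal P_A$ and maps $U$ into $U$. Note that nilpotency of $\mathcal P_A$ is used only to make the sum defining $S_U$ finite, and hence to get a uniform bound on $N$. Over arbitrary dimension this is the substantive role of the hypothesis; without it we would only get that $Q_U$ acts locally nilpotently on each $U_A^k$.
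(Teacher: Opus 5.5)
Your proof is correct and essentially matches the paper's: both write $S_U=\sum_{k=1}^{m-1}U_A^k$, expand iterated brackets applied to a $k$-fold product by the generalized Leibniz rule, and use pigeonhole to force one factor into a vanishing term of the lower central series of $U_L$. Your bound is slightly generous, since $c$ operators on a single factor already give $0$ when $U_L^{c+1}=0$, but this does not affect the argument.
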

\begin{proof}
    Since $\mathcal{P}_A$ is nilpotent, the subalgebra of $\mathcal{P}_A$ generated by $U$ is $S_U= \sum^{n-1}_{k=1} U^k_A$,  where $n$ is the nilpotency index of $\mathcal{P}_A$. Let $m$ be the nilpotency index of $U_L$. Then we have the following equation (which can be proven by induction on $t$)
    $$Q_U^{t}(U^k)\subseteq\sum_{i_1+ \ldots + i_k = t} Q_U^{i_1}(U)\cdots Q_U^{i_k}(U).$$
    From here, if we choose $t\geq nm$, it is clear that $Q_U^{t}(U^k)=0$ for $1 \leq k\leq n-1$ as one of the indexes in each summand will be greater than $m$. Therefore, it follows that $Q_U^{t}(S_U)=0$.
\end{proof}

\begin{propo}\label{propnil}
    Let $\mathcal{P}$ be a Poisson algebra in which $\mathcal{P}_A$ is nilpotent. If $U$ is a nilpotent subalgebra of $\mathcal{P}_L$, then $S_U$ is nilpotent.
\end{propo}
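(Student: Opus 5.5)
The plan is to reduce to \cite[Theorem 2.13]{oo}, exactly as in Lemma~\ref{square} and Lemma~\ref{lemLs}. That result says a Poisson algebra which is both associative nilpotent and Lie nilpotent is nilpotent. So it suffices to check these two properties for $S_U$. Associative nilpotency is immediate: $S_U$ is an associative subalgebra of the nilpotent algebra $\mathcal P_A$, so $(S_U)_A^n\subseteq\mathcal P_A^n=0$, where $n$ is the nilpotency index of $\mathcal P_A$. Everything therefore comes down to showing that $S_U$ is Lie nilpotent.

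For Lie nilpotency I would use the description $S_U=\sum_{k=1}^{n-1}U_A^k$ from the proof of Lemma~\ref{actn}. The aim is to show that $(S_U)_L^t=0$ for large $t$, i.e.\ that all brackets $[x_1,[x_2,\dots,[x_{t-1},x_t]\dots]]$ with $x_j\in S_U$ vanish. Each $x_j$ is a sum of associative monomials in elements of $U$.

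By the Leibniz rule, $Q_{ab}=P_aQ_b+P_bQ_a$ for all $a,b$. Hence for a monomial $x=u_1\cdots u_k$ the operator $Q_x$ is a sum of terms of the form $P_{w}Q_{u_i}$, where $w$ is a product of the remaining $u$'s. So $Q_{S_U}$ lies in the span of the operators $P_wQ_u$ with $u\in U$ and $w\in S_U$ or $w$ empty.

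The bracket above is $Q_{x_1}\cdots Q_{x_{t-1}}(x_t)$. After expanding, it is a sum of words in the operators $P_w$ and $Q_u$, applied to an element of $S_U$. The next step is to move all $P$'s to the left of all $Q$'s using $Q_uP_w=P_wQ_u+P_{[u,w]}$. Since $[U,S_U]\subseteq S_U$, every $P$-factor produced this way is again multiplication by an element of $S_U$. Each resulting term then has the form $P_{w_1}\cdots P_{w_r}\,Q_U^{s}(y)$ with $y\in S_U$ and $s$ equal to the number of $Q$-factors that survived.

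I would then split into two cases. If $r\ge n$, the term is zero by associative nilpotency. If $r<n$, then at least $t-1-r$ of the $Q_u$ factors survive, because each commutation trades one $Q$ for one new $P$. By Lemma~\ref{actn}, $Q_U^{s}$ kills $S_U$ once $s\ge nm$. So taking $t\ge n+nm+1$ makes every term vanish, and $(S_U)_L^t=0$.

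The main obstacle is the bookkeeping in this normal-ordering step. One has to check that each commutation step preserves the invariant ``the number of $P$'s plus the number of $Q$'s is at least $t-1$''. One also has to check that the $P$-factors never escape $S_U$, so that associative nilpotency still applies to them. Once this counting is in place, combining associative and Lie nilpotency via \cite[Theorem 2.13]{oo} gives that $S_U$ is nilpotent.
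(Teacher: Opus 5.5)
Your framework is the same as the paper's: reduce to \cite[Theorem 2.13]{oo}, expand each $Q_{x_j}$ by the Leibniz rule, push the $P$'s to the left with $Q_uP_w=P_wQ_u+P_{[u,w]}$, and combine associative nilpotency with Lemma~\ref{actn}. However, the counting in your case $r<n$ is wrong, at exactly the step you flagged as the main obstacle.

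In the second term of $Q_uP_w=P_wQ_u+P_{[u,w]}$, the pair $Q_uP_w$ is replaced by the single operator $P_{[u,w]}$. So the number of $P$'s is unchanged and the number of $Q$'s drops by one, which means the invariant ``$\#P+\#Q\ge t-1$'' is not preserved. For instance, take $x_1,\dots,x_{t-2}=u_1,\dots,u_{t-2}\in U$ and $x_{t-1}=u'u''$. The expansion then contains the word $Q_{u_1}\cdots Q_{u_{t-2}}P_{u'}Q_{u''}$. Its normal ordering contains the term $P_{[u_1,[\dots,[u_{t-2},u']\dots]]}Q_{u''}$, which has $r=s=1$ for every $t$. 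So the claim ``if $r<n$ then at least $t-1-r$ of the $Q$'s survive'' is false, and the bound $t\ge n+nm+1$ is not justified. That particular term does vanish for large $t$, but only because its subscript lies in $Q_U^{t-2}(S_U)$. Your bookkeeping treats the $P$-subscripts as arbitrary elements of $S_U$, and so it discards exactly this information.

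The repair, which is what the paper does, is to split into cases on the original, unreordered word rather than after reordering. Both terms of the commutation relation carry the same number of $P$'s as the word they came from.

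\begin{itemize}
\item If the original word has at least $n$ $P$-factors, every normal-ordered summand begins with at least $n$ multiplication operators and so kills $S_U$.
\item If it has fewer than $n$ $P$-factors, its $t-1$ $Q$-factors fall into at most $n$ maximal blocks of consecutive $Q$'s. Some block therefore has length at least $(t-1)/n$. Every $P_w$ and $Q_u$ maps $S_U$ into $S_U$, so that block acts on an element of $S_U$. The word then vanishes by Lemma~\ref{actn} once $(t-1)/n\ge nm$, and no reordering is needed in this case.
\end{itemize}

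Alternatively, you could keep your normal ordering but record each $P$-subscript as $Q_{v_1}\cdots Q_{v_j}(w)$ with $w\in S_U$. The $t-1$ $Q$'s are then distributed among at most $n$ slots, and the same pigeonhole argument applies. Either way, $t$ must be taken of order $n^2m$ rather than $n+nm$.
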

\begin{proof}
    Since $S_U$ is associative nilpotent, it remains to show that $S_U$ is Lie nilpotent. 
It is enough to prove that for a suitable positive number $k$ we have $Q_{s_1}\ldots Q_{s_k}(S_U)=0$ for every $s_1, \ldots, s_k\in S_U$ with $s_i=u_{i, 1}\cdots u_{i, k_i}\in U_A^{k_i}$. Using the relation $Q_{xy}=P_yQ_x + P_xQ_y$ for all $x, y\in \mathcal{P}$, which  follows by the Leibniz rule, we obtain 
$$Q_{s_i} = \sum_{t=1}^{k_i} P_{u_{i,1}}\ldots \widehat{P_{u_{i,t}}} \ldots P_{u_{i, k_i}} Q_{u_{i, t}},$$
where $\widehat{P_{u_{i,t}}}$ means that the factor $P_{u_{i,t}}$ is omitted.
Therefore, $Q_{s_1}\ldots Q_{s_k}$ can be written as a sum of terms of the form $T = T_1 \ldots T_N$ where $T_i$ can be an operator in $P_U$ or in $Q_U$. We will show that each of these terms vanishes in $S_U$. Notice that $N\geq k$ and that the number of $Q$ operators in each summand is precisely $k$.

Let $n$ be the index of nilpotency of $(S_U)_A$ and let $m$ be such that $Q_U^{m}(S_U)=0$. This $m$ exists by Lemma~\ref{actn}.
Since $S_U$ is a subalgebra of $\mathcal{P}$, any combination of operators in $P_U$ or in $Q_U$  yields an element in $S_U$. Choose $k> nm$. Then either $T$ has at least $n$ operators in $P_U$ or $T$ has a consecutive sequence of $m$ operators in $Q_U$.

In the first case, we 
note that the Leibniz rule is equivalent to $P_{[x,y]} = Q_{x}P_y - P_{y}Q_{x}$. Suppose $T = T_1 \ldots T_{i-1} Q_{x_i} P_{x_{i+1}}T_{i+2} \ldots T_N$ for  $x_i, x_{i+1}\in U$. By this relation we have $$T= T_1 \ldots T_{i-1} P_{x_{i+1}} Q_{x_i} T_{i+2} \ldots T_N + T_1 \ldots T_{i-1} P_{[x_i, x_{i+1}]} T_{i+2} \ldots T_N.$$
    We call this process an inversion. Notice that $P_{[x_i, x_{i+1}]}$ still belongs to $P_U$ and that the number of $P$ maps in both terms is preserved in an inversion. Also, observe that, by applying inversions, $T$ can be written as the sum of a term of length $N$ of the form $P \ldots P Q \ldots Q$ (we call this form regular) and other terms of lower length for certain $t\geq 0$. By induction on the length, other terms can be written in regular form too, so $T$ is a combination of regular terms, and each of these terms has the same number of operators in $P_U$ as the term from which it is obtained. Therefore, if $T$ has at least $n$ operators in $P_U$, each of its regular summands has at least $n$ operators in $P_U$. But then they have $n$ consecutive operators in $P_U$, so their action of $S_U$ vanishes by the nilpotency of $\mathcal{P}_A$. Hence, we conclude $T(S_U) = 0$.
    
In the second case, we simply have $T(S_U) = 0$ because $Q_U$ acts nilpotently on $S_U$ and we have $m$ consecutive maps in $Q_U$. 
\end{proof}

\begin{remark}\label{remex1}
    The assumption on the nilpotency of $\mathcal{P}_A$ cannot be removed in Proposition~\ref{propnil}. Consider the polynomial algebra $\mathbb{C}[x, y]$ equipped with the standard hamiltonian Lie bracket 
    \begin{equation}\label{eqbr}
        [f, g] = \dfrac{\partial f}{\partial x}\dfrac{\partial g}{\partial y} - \dfrac{\partial g}{\partial x}\dfrac{\partial f}{\partial y}.
    \end{equation} This is a well-known Poisson algebra. The Lie algebra $U$ spanned by $\left\{1,x,y\right\}$ is nilpotent. In fact, it is isomorphic to the Heisenberg algebra. However, $S_U=\mathbb{C}[x,y]$ is neither Lie nilpotent ($Q_{xy}$ has eigenvector $x$ with non zero eigenvalue) nor associative nilpotent.
\end{remark}

\begin{propo}\label{propsolv}
    Let $\mathcal{P}$ be a Poisson algebra in which $\mathcal{P}_A$ is nilpotent. If $U$ is a solvable subalgebra of $\mathcal{P}_L$, then $S_U$ is solvable.
\end{propo}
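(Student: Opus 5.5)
By Proposition~\ref{charsolv}, a Poisson algebra is solvable if and only if its associative algebra is nilpotent and its Lie algebra is solvable. Since $S_U$ is an associative subalgebra of the nilpotent algebra $\mathcal P_A$, the algebra $(S_U)_A$ is nilpotent. So the whole task is to show that $(S_U)_L$ is solvable. Once that is done, Lemma~\ref{lemLs} (or Proposition~\ref{charsolv} applied to $S_U$) finishes the proof.

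I would mirror the proof of Lemma~\ref{lemLs}: show that the Lie derived series of $S_U$ eventually lands in $U_L^{(k)}$ plus something associatively small. Write $S_U = U + S_U^{\ge 2}$, where $S_U^{\ge 2}=\sum_{k\ge 2}U_A^k=(S_U)_A^2$ is an ideal of $S_U$. By Lemma~\ref{square} applied to the Poisson algebra $S_U$ (whose associative part is nilpotent), $(S_U)_A^2$ is a nilpotent, hence solvable, ideal of $S_U$. I would prove by induction on $k$ that
$$(S_U)_L^{(k)} \subseteq U_L^{(k)} + (S_U)_A^2 .$$
The case $k=0$ is trivial. For the inductive step I would expand
$$[U_L^{(k)}+(S_U)_A^2,\,U_L^{(k)}+(S_U)_A^2]\subseteq U_L^{(k+1)} + [U,(S_U)_A^2]+[(S_U)_A^2,(S_U)_A^2]$$
and use the fact that $(S_U)_A^2$ is a Poisson ideal of $S_U$, so the last two brackets lie in $(S_U)_A^2$. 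Because $U$ is Lie solvable, $U_L^{(m)}=0$ for some $m$, which gives $(S_U)_L^{(m)}\subseteq (S_U)_A^2$.

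Now $(S_U)_A^2$ is solvable as a Poisson algebra, in particular Lie solvable, say $((S_U)_A^2)_L^{(r)}=0$. Since the derived series of an ideal sitting inside another ideal is controlled, $(S_U)_L^{(m+r)}\subseteq ((S_U)_A^2)_L^{(r)}=0$. So $(S_U)_L$ is solvable, and Proposition~\ref{charsolv} gives that $S_U$ is solvable.

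The only delicate point is confirming that $(S_U)_A^2$ is an ideal of $S_U$ under the bracket. This follows from the Leibniz rule: $[xy,z]=[x,z]y+x[y,z]\in S_U\cdot S_U$ for $x,y,z\in S_U$. Lemma~\ref{square} gives it directly in any case. No use of Lemma~\ref{actn} or Proposition~\ref{propnil} is needed, so I expect no real obstacle beyond bookkeeping of the induction.
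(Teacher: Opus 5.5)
Your argument is correct, but it takes a different route from the paper.

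\textbf{The paper's route.} The paper inducts on the associative nilpotency index $m$ of $(S_U)_A$. The top power $I=(S_U)_A^{m-1}$ is an abelian Lie ideal of $S_U$, since $[I,I]\subseteq (S_U)_A^{2m-3}=0$. The quotient $S_U/I$ is associatively generated by the solvable Lie subalgebra $(U+I)/I$ and has smaller nilpotency index, so it is Lie solvable by induction. Solvability then lifts through the abelian ideal $I$.

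\textbf{Your route.} You replace this induction by the single observation $S_U=U+(S_U)_A^2$. This lets you run the derived-series estimate from the proof of Lemma~\ref{lemLs} with the Lie subalgebra $U$ in place of a Poisson subalgebra. It shows that Proposition~\ref{propsolv} is really the same computation as Lemma~\ref{lemLs}, and it avoids quotients altogether.

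\textbf{What your route buys.} It gives a sharper explicit bound. From $[(S_U)_A^r,(S_U)_A^s]\subseteq (S_U)_A^{r+s-1}$ one gets $((S_U)_A^2)_L^{(r)}\subseteq (S_U)_A^{2^r+1}$. So if $(S_U)_A^n=0$ with $n\geq 2$, the derived length of $(S_U)_L$ exceeds that of $U_L$ by at most $\lceil\log_2(n-1)\rceil$. The paper's peeling argument adds one abelian layer per step, so it only bounds the excess by $n-2$.

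\textbf{What the paper's route buys.} It is slightly more self-contained: it never invokes Lemma~\ref{square}, whose Poisson-nilpotency conclusion rests on an external theorem. However, you only use the Lie solvability of $(S_U)_A^2$, and the displayed inclusion above gives that directly. So your proof does not actually depend on that external input.
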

\begin{proof}
    Since $S_U$ is associative nilpotent, we will prove that it is Lie solvable and the result will follow by Proposition~\ref{charsolv}. 
    We proceed by induction on the nilpotency index $m$ of $(S_U)_A$. For $m=2$, the result is trivial since $S_U = U$. Assume the result holds whenever the associative nilpotency index is smaller than $m$.
Suppose $(S_U)_A^m=0$ and $I = (S_U)_A^{m-1}\neq 0$ where $m\geq 3$. Notice that $[S_U, I] = [S_U, (S_U)_A^{m-1}] \subseteq I$ by the Leibniz rule, so $I$ is a Lie ideal of $S_U$.
Clearly, the subalgebra $I$ is abelian because we have $$[I, I]\subseteq [(S_U)_A^{m-1}, (S_U)_A^{m-1}] \subseteq (S_U)_A^{2m-3} = 0.$$ 

Now, $S_U/I$ is associatively generated by $(U+I)/I$, which is a solvable Lie subalgebra of $S_U/I$. Also,
$S_U/I$ has associative nilpotency index at most $m-1$. Hence, $S_U/I$ is Lie solvable by hypothesis. Therefore, since $I$ and $S_U/I$ are Lie solvable,  $S_U$ is Lie solvable too.  
\end{proof}

\begin{theor}\label{nilcharacterization}
Let $\mathcal{P}$ be a Poisson algebra. The following statements are equivalent.
\begin{enumerate}
    \item $\mathcal{P}_A$ is a nilalgebra;
    \item for every finite-dimensional nilpotent subalgebra $U$ of $\mathcal{P}_L$, the subalgebra $S_U$ is nilpotent;
    \item for every finite-dimensional solvable subalgebra $U$ of $\mathcal{P}_L$, the subalgebra $S_U$ is solvable.
\end{enumerate}
Moreover, if $\mathcal{P}$ is finite-dimensional, then these conditions are
equivalent to $\mathcal{P}_A$ being nilpotent.
\end{theor}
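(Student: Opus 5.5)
The plan is to prove (i)$\Rightarrow$(ii), (i)$\Rightarrow$(iii), then (ii)$\Rightarrow$(i) and (iii)$\Rightarrow$(i), and finally treat the finite-dimensional addendum.

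\emph{(i) implies (ii) and (iii).} Let $U$ be a finite-dimensional nilpotent (resp.\ solvable) subalgebra of $\mathcal P_L$. The key step is to show that $(S_U)_A$ is nilpotent. Then $S_U$ is a Poisson algebra whose associative part is nilpotent, and we may apply Proposition~\ref{propnil} (resp.\ Proposition~\ref{propsolv}) inside $\mathcal P':=S_U$, which contains $U$ and has Poisson closure $S_U$.

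Now $(S_U)_A$ is a commutative associative algebra generated by the finite-dimensional space $U$, and $\mathcal P_A$ is nil by (i). Pick a basis $u_1,\dots,u_d$ of $U$ with $u_i^{n_i}=0$. Every monomial in the $u_i$ of total degree $\ge \sum_i (n_i-1)+1$ contains some $u_i^{n_i}$ by commutativity, so it vanishes. Hence $(S_U)_A$ is nilpotent, which proves both implications.

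\emph{(ii) implies (i) and (iii) implies (i).} Let $x\in\mathcal P$. The subspace $U=\mathbb F x$ is a one-dimensional abelian Lie subalgebra, so it is nilpotent and solvable. Its closure $S_U$ is spanned by the powers $x^k$. Under (ii), $S_U$ is nilpotent; under (iii), it is solvable, and then by the forward direction of Proposition~\ref{charsolv} $(S_U)_A$ is nilpotent. In either case some power $x^{n}$ lies in $(S_U)_A^{n}=0$. Thus $x$ is nilpotent, so $\mathcal P_A$ is nil.

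\emph{Finite-dimensional addendum.} If $\mathcal P_A$ is nilpotent, it is nil, giving (i). Conversely, a finite-dimensional commutative associative nil algebra is nilpotent. To see this directly, apply the degree argument above to a basis of $\mathcal P$ itself: $\mathcal P_A$ is generated by finitely many nilpotent commuting elements, so a high enough power vanishes.

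The only point needing care is the counting argument showing that a commutative algebra generated by finitely many nilpotent elements is nilpotent. This argument works for algebras without an identity, which is the setting here, so I do not expect any step to be a serious obstacle.
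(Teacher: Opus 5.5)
Your proposal is correct and follows the paper's proof essentially step for step. You use the monomial-counting argument on a basis of $U$ to get $(S_U)_A$ nilpotent, then apply Propositions~\ref{propnil} and~\ref{propsolv} inside $S_U$, and prove the converse via $U=\mathbb F x$ together with the forward direction of Proposition~\ref{charsolv}. The only cosmetic difference is that you justify the finite-dimensional addendum by rerunning the same counting argument on a basis of $\mathcal P$, whereas the paper simply quotes the standard fact that a finite-dimensional nil commutative associative algebra is nilpotent.
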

\begin{proof}
Assume first that $\mathcal{P}_A$ is a nilalgebra, and let
$U$ be a finite-dimensional Lie subalgebra of $\mathcal{P}_L$. Choose a basis
$u_1,\ldots,u_n$ of $U$. Since $\mathcal{P}_A$ is a nilalgebra, we have $u_i^{k_i} = 0$ for some $k_i$. Since $\mathcal{P}_A$ is commutative, every monomial in $u_1,\ldots,u_n$ of degree at least $N=1+\sum_{i=1}^n (k_i-1)$ is zero. Hence, the algebra $(S_U)_A$ is nilpotent. 

If $U$ is nilpotent as a Lie algebra, then applying
Proposition~\ref{propnil} to the Poisson algebra $S_U$ gives that $S_U$ is
nilpotent. If $U$ is solvable as a Lie algebra, then applying
Proposition~\ref{propsolv} to the Poisson algebra $S_U$ gives that $S_U$ is
solvable. Thus (i) implies both (ii) and (iii).

Conversely, suppose that $S_U$ is nilpotent (resp. solvable) for every finite-dimensional nilpotent (resp. solvable) Lie
subalgebra $U$ of $\mathcal{P}_L$. Let $x\in\mathcal{P}$. Then $\mathbb{F} x$ is an
abelian Lie subalgebra of $\mathcal{P}_L$. Under condition $ii)$, $S_{\mathbb F x}$ is nilpotent, and hence $(S_{\mathbb F x})_A$ is nilpotent. Under condition $iii)$, $S_{\mathbb F x}$ is solvable, so $(S_{\mathbb F x})_A$ is nilpotent by Proposition~\ref{charsolv}. In particular, the associative subalgebra generated
by $x$ is nilpotent. Therefore, $\mathcal{P}_A$ is a nilalgebra. In finite dimension, a nil commutative
associative algebra is nilpotent, which proves the last assertion.
\end{proof}

\begin{remark}
The finite-dimensional hypothesis on $U$ in Theorem~\ref{nilcharacterization} cannot be
removed. Let 
$$\mathcal{P}=(x_1,x_2,\ldots)\subseteq
    \mathbb{F}[x_1,x_2,\ldots]/(x_1^2,x_2^2,\ldots),$$
and endow $\mathcal{P}$ with the zero Lie bracket. Then $\mathcal{P}_A$ is nil but not nilpotent.
Now $\mathcal{P}_L$ is abelian, hence nilpotent. Taking
$U=\mathcal{P}_L$, we have $S_U=\mathcal{P}$, which is not nilpotent.
Thus condition $(i)$ does not imply condition $(ii)$ for arbitrary
infinite-dimensional Lie subalgebras $U$.
\end{remark}

\section{Poisson closures of some Lie subalgebras}\label{sec4}

We now consider the complementary question of when the Poisson closure does not enlarge the
original Lie subalgebra. In this section we study the Poisson closures of some canonical Lie subalgebras of
$\mathcal{P}_L$. A Lie subalgebra $U$ of $\mathcal{P}_L$ will be called {\it stable}
under Poisson closure if $S_U=U$. Equivalently, $U$ is stable when it
is a Poisson subalgebra of $\mathcal{P}$. Otherwise, $U$ will be called {\it unstable} under Poisson closure.

\subsection{Stability of Lie subalgebras} The following proposition collects the main examples of stable Lie subalgebras. For every $a\in\mathcal P$, recall that the Engel subalgebra associated with $a$ is 
$$
E_a:=\{x\in\mathcal P:Q_a^n(x)=0\text{ for some }n=n(x)\geq 1\}.
$$

\begin{propo}
    Let $\mathcal{P}$ be a Poisson algebra over a field $\mathbb{F}$. Then the following subalgebras of  $\mathcal{P}_L$ are stable:
    \begin{enumerate}
        \item the centralizer of any set $X\subseteq \mathcal{P}$. Moreover, the center of $\mathcal{P}_L$ is stable;
        \item the maximal abelian subalgebras of $\mathcal{P}_L$ are stable; 
        \item Engel subalgebras are stable;
        \item if $\mathcal{P}$ has dimension $n$ and $|\mathbb{F}|\geq n$, then any Cartan subalgebra of $\mathcal{P}_L$ is stable; 
        \item if $\mathcal{P}$ is finite-dimensional and solvable, then any Cartan subalgebra of $\mathcal{P}_L$ is stable.
    \end{enumerate}
\end{propo}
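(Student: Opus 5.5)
The plan is to derive each item from the Leibniz rule, written as the operator identities $Q_{xy}=P_yQ_x+P_xQ_y$ and $Q_a P_y = P_y Q_a + P_{[a,y]}$. In every case the task is to show that the given Lie subalgebra is closed under the associative product.

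For (i), let $C=\{y:[y,X]=0\}$ and take $y,z\in C$ and $x\in X$. Then
$$[yz,x]=[y,x]z+y[z,x]=0,$$
so $yz\in C$. The centre $Z(\mathcal P_L)$ is the case $X=\mathcal P$. For (ii), we simply cite Lemma~\ref{sub}.

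For (iii), fix $a\in\mathcal P$. The key observation is that $Q_a$ is a derivation of $\mathcal P_A$; this is just the Leibniz rule read as $[a,yz]=[a,y]z+y[a,z]$. The general Leibniz formula for derivations then gives
$$Q_a^N(yz)=\sum_{i+j=N}\binom{N}{i}Q_a^i(y)\,Q_a^j(z).$$
Suppose $Q_a^{n}(y)=0$ and $Q_a^{m}(z)=0$, and take $N=n+m$. Every summand has $i\ge n$ or $j\ge m$, so it vanishes. This argument uses no hypothesis on the characteristic, because the binomial coefficients only multiply terms that are already zero. Hence $E_a$ is closed under the product. (That $E_a$ is a Lie subalgebra is standard, and in any case it is not what we need to prove.)

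For (iv) and (v), the plan is to reduce to (iii). For (iv), we use the classical fact that, for a finite-dimensional Lie algebra over a field with $|\mathbb F|\ge \dim$, every Cartan subalgebra is a minimal Engel subalgebra, i.e.\ $H=E_a$ for a regular element $a$. This is Barnes' theorem; for infinite fields it is the usual result on regular elements. Stability then follows from (iii). This citation is the step to check most carefully.

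For (v), the field may be small, so the plan is different. Since $\mathcal P$ is solvable, $\mathcal P_A$ is nilpotent by Proposition~\ref{charsolv}. A Cartan subalgebra $H$ is nilpotent, so $S_H$ is nilpotent by Proposition~\ref{propnil}. Moreover $S_H\subseteq N_{\mathcal P_L}(S_H)$, because $S_H$ is a Poisson subalgebra. Now let $x$ satisfy $[x,H]\subseteq H$. The operator $Q_x$ is a derivation of both products, so
$$[x,S_H]\subseteq S_H .$$
Hence $N_{\mathcal P_L}(H)\subseteq N_{\mathcal P_L}(S_H)$. To conclude $S_H=H$, I would pass to the nilpotent Lie algebra $S_H$, which contains $H$ as a subalgebra. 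In a nilpotent Lie algebra a proper subalgebra is strictly contained in its normalizer. So if $H\subsetneq S_H$, there is $y\in S_H\setminus H$ with $[y,H]\subseteq H$, which contradicts the self-normalizing property of $H$ in $\mathcal P_L$. Therefore $S_H=H$.

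This argument actually shows that any Cartan subalgebra $H$ for which $S_H$ is Lie nilpotent is stable, and that in turn yields (v). I expect the main obstacle to be (iv), namely making sure the cited characterization of Cartan subalgebras as Engel subalgebras holds under exactly the bound $|\mathbb F|\ge n$.
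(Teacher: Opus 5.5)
Your proposal is correct and follows the paper's proof essentially item by item. Items (i), (iii) and (iv) match the paper directly: the same Leibniz computation for centralizers, the same binomial expansion of $Q_a^N(yz)$ for Engel subalgebras, and the same appeal to Barnes' theorem. The differences are cosmetic: for (ii) the paper deduces stability from (i) via $U=C_{\mathcal P_L}(U)$ instead of citing Lemma~\ref{sub}, and for (v) it notes that a Cartan subalgebra is maximal nilpotent and invokes Proposition~\ref{lie2}, which is your normalizer-condition argument with Proposition~\ref{propnil} packaged as a lemma.
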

\begin{proof} Let $\mathcal{P}$ be a Poisson algebra over a field $\mathbb{F}$. We prove each claim separately.
    \begin{enumerate}
        \item Let $X$ be a subset of $\mathcal{P}$. Then the centralizer $C_{\mathcal{P}_L}(X)$ is a subalgebra of $\mathcal{P}_L$. Also, for every $x,y\in C_{\mathcal{P}_L}(X)$ we have 
        $$[xy, X] = y[x, X] + x[y, X] = 0.$$ So $C_{\mathcal{P}_L}(X)$ is a subalgebra of $\mathcal{P}$.
        In particular, the center of $\mathcal{P}_L$ is $C_{\mathcal{P}_L}(\mathcal{P}_L)$. 
        \item Let $U$ be a maximal abelian subalgebra of $\mathcal{P}_L$. Then $U = C_{\mathcal{P}_L}(U)$ and $U$ is stable by the previous paragraph.
        
        \item 
    Given $a\in\mathcal{P}$, we show that for every $x, y \in E_a$, we obtain $xy\in E_a$. Indeed, we have 
    $$Q_a^k(xy) = \sum_{i=0}^{k}\binom{k}{i} Q^i_a(x)Q_a^{k-i}(y).$$
    If $k> r + s$, for $r,s$ with $Q^r_a(x) = 0$ and $Q^s_a(y) = 0$, we have $Q_a^k(xy) = 0$, so $xy\in E_a$.  Therefore, any Engel subalgebra $E_a$ is stable.

    \item Let $C$ be a Cartan subalgebra of $\mathcal{P}_L$. If the field has at least $n$ elements, then there is a regular element $a\in \mathcal{P}$ such that $C = E_a$, by \cite[Theorem 1]{barnes} ($C$ is a minimal Engel in Barnes' terminology).  Hence, $C$ is stable.
    
    \item Assume that $\mathcal{P}$ is solvable. By Proposition~\ref{charsolv},
$\mathcal{P}_A$ is nilpotent and $\mathcal{P}_L$ is solvable. Let $C$ be a
Cartan subalgebra of $\mathcal{P}_L$. Since $C$ is nilpotent and
self-normalising, it is a maximal nilpotent subalgebra of $\mathcal{P}_L$. By
Proposition~\ref{lie2}, $C$ is a subalgebra of $\mathcal{P}$.
\end{enumerate}    
This proves all the claims.
\end{proof}

Under additional assumptions, maximal nilpotent and maximal solvable Lie
subalgebras are also stable under Poisson closure as a consequence of results of the previous section. 

\begin{propo}\label{lie2} \label{lie2s}
    Let $\mathcal{P}$ be a Poisson algebra. Suppose $\mathcal{P}_A$ is nil.
\begin{enumerate}
    \item If $U$ is a finite-dimensional maximal nilpotent subalgebra of
    $\mathcal{P}_L$, then $U$ is stable.

    \item If $U$ is a finite-dimensional maximal solvable subalgebra of
    $\mathcal{P}_L$, then $U$ is stable.
\end{enumerate}
\end{propo}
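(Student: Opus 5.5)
The plan is to reduce both statements to Theorem~\ref{nilcharacterization}, via the implication (i)$\Rightarrow$(ii) (resp.\ (i)$\Rightarrow$(iii)), and then use maximality. Throughout, let $U$ be a finite-dimensional maximal nilpotent (resp.\ maximal solvable) subalgebra of $\mathcal{P}_L$. Since $\mathcal{P}_A$ is nil, Theorem~\ref{nilcharacterization} gives that $S_U$ is a nilpotent (resp.\ solvable) Poisson subalgebra of $\mathcal{P}$.

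Next we pass from Poisson to Lie properties. A nilpotent Poisson algebra has $S_U^k=0$ for some $k$. Since $(S_U)^k_L\subseteq S_U^k$, the underlying Lie algebra $(S_U)_L$ is nilpotent. Likewise $(S_U)^{(k)}_L\subseteq S_U^{(k)}$, so solvability of $S_U$ forces $(S_U)_L$ to be solvable.

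We now conclude with maximality. We have $U\subseteq S_U$, and $(S_U)_L$ is a nilpotent (resp.\ solvable) Lie subalgebra of $\mathcal{P}_L$. Maximality of $U$ among nilpotent (resp.\ solvable) Lie subalgebras then yields $S_U=U$. In other words, $U$ is stable.

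The main point to check is that Theorem~\ref{nilcharacterization} really applies in possibly infinite dimension. Its proof only uses that $U$ is finite-dimensional, so that $(S_U)_A$ is nilpotent. It then applies Propositions~\ref{propnil} and \ref{propsolv} to the Poisson algebra $S_U$, which is legitimate because $(S_U)_A$ is nilpotent and $U$ is a nilpotent (resp.\ solvable) Lie subalgebra of $S_U$. No finiteness of $S_U$ itself is needed, because maximality is used only for the Lie subalgebra $(S_U)_L$. So I do not expect a genuine obstacle; the only care needed is the comparison of the mixed series with the Lie series.
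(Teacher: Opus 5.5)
Your proposal is correct and follows the paper's own proof. The paper likewise invokes Theorem~\ref{nilcharacterization} to get $S_U$ nilpotent (resp.\ solvable), observes that $(S_U)_L$ is then a nilpotent (resp.\ solvable) Lie subalgebra containing $U$, and concludes $S_U=U$ by maximality. Your explicit comparison of the mixed series with the Lie series is a detail the paper leaves implicit.
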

\begin{proof}
    Let $U$ be a finite-dimensional maximal nilpotent subalgebra of
$\mathcal{P}_L$. Since $\mathcal{P}_A$ is a nilalgebra, Theorem~\ref{nilcharacterization}
implies that $S_U$ is a nilpotent Poisson subalgebra of $\mathcal{P}$. Hence
$(S_U)_L$ is a nilpotent Lie subalgebra of $\mathcal{P}_L$ containing $U$.
By the maximality of $U$, we obtain
$S_U=U$. 

An analogous argument using Theorem~\ref{nilcharacterization} proves the second claim.
\end{proof}

\begin{propo}
    Let $\mathcal{P}$ be a Poisson algebra. Suppose $\mathcal{P}_A$ is nilpotent.
\begin{enumerate}
    \item If $U$ is a maximal nilpotent subalgebra of
    $\mathcal{P}_L$, then $U$ is stable.

    \item If $U$ is a maximal solvable subalgebra of
    $\mathcal{P}_L$, then $U$ is stable.
\end{enumerate}
\end{propo}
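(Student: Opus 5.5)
The argument follows the template of Proposition~\ref{lie2}, but relies on Propositions~\ref{propnil} and~\ref{propsolv} directly rather than on Theorem~\ref{nilcharacterization}. The finite-dimensionality hypothesis on $U$ is not needed there, because the global nilpotency of $\mathcal P_A$ already gives the uniform bounds those propositions require.

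\textbf{Nilpotent case.} Let $U$ be a maximal nilpotent subalgebra of $\mathcal P_L$. We first note that $(S_U)_A$ is nilpotent: if $\mathcal P_A^n=0$, then $(S_U)_A^n=0$, and in fact $S_U=\sum_{k=1}^{n-1}U_A^k$. Proposition~\ref{propnil} then applies to $U$ as a nilpotent subalgebra of $\mathcal P_L$, and shows that $S_U$ is a nilpotent Poisson subalgebra. In particular $(S_U)_L$ is a nilpotent Lie subalgebra of $\mathcal P_L$ containing $U$. Maximality of $U$ forces $S_U=U$, so $U$ is stable.

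One point needs checking: that Lemma~\ref{actn} and Proposition~\ref{propnil} never used finite dimension of $U$. In Lemma~\ref{actn} the bounds come only from the nilpotency index $n$ of $\mathcal P_A$ and the nilpotency class $m$ of $U_L$, and both are finite by hypothesis. In Proposition~\ref{propnil} the bound $k>nm$ likewise involves only these indices. So the argument goes through for arbitrary $U$.

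\textbf{Solvable case.} Let $U$ be a maximal solvable subalgebra of $\mathcal P_L$. Proposition~\ref{propsolv} gives that $S_U$ is solvable. Its proof is an induction on the associative nilpotency index of $(S_U)_A$, which is finite because $\mathcal P_A$ is nilpotent, and it uses only the solvability of the image of $U$ in the quotients. Hence $(S_U)_L$ is a solvable Lie subalgebra containing $U$, and maximality gives $S_U=U$.

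The only delicate step is making sure the quoted propositions really hold without any dimension assumption on $U$, and in the solvable case that the induction hypothesis in Proposition~\ref{propsolv} is applied within the Poisson algebra $S_U/I$. That application is valid because $S_U/I$ is associatively nilpotent and is generated by the solvable Lie subalgebra $(U+I)/I$. Beyond this check, the argument is a direct application of these results combined with maximality.
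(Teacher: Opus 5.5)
Your proposal is correct and matches the paper's proof. Both arguments apply Proposition~\ref{propnil} (resp.\ Proposition~\ref{propsolv}) to show that $(S_U)_L$ is a nilpotent (resp.\ solvable) Lie subalgebra containing $U$, and then conclude $S_U=U$ by maximality; your extra check that no finite-dimensionality of $U$ is used is accurate and is left implicit in the paper.
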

\begin{proof}
    Let $U$ be a maximal nilpotent subalgebra of $\mathcal{P}_L$. Then we have that $S_U$ is a nilpotent subalgebra and in particular $(S_U)_L$ is a nilpotent subalgebra of  $\mathcal{P}_L$, by Proposition~\ref{propnil}. Hence, the maximality of $U$ implies that $S_U = U$. 
    
    An analogous argument using Proposition~\ref{propsolv} proves the second claim.
\end{proof}

\begin{remark}\label{remex2}
The assumption that $\mathcal{P}_A$ is nil cannot be omitted in
the previous propositions.
    Assume $\mathbb{F}$ has characteristic $p\geq 2$. Consider the finite-dimensional algebra $\mathbb{F}[x, y]/(x^p, y^p)$ with the bracket as in (\ref{eqbr}). This quotient is well-defined since $(x^p, y^p)$ is in the center of the Lie bracket. Choose $U$ the span of $\left\{1, y, x^k: 1\leq k\leq p-1\right\}$. Then $U$ is a maximal nilpotent Lie subalgebra. But $U$ is not a subalgebra of $\mathbb{F}[x, y]/(x^p, y^p)$, because $xy\not\in U$. 
\end{remark}

\subsection{Poisson closure of the derived algebra}  In this section, we write $D := \mathcal{P}_L^2 = [\mathcal{P}, \mathcal{P}]$. 
Following the example discussed in Remark~\ref{remex2}, one can see that for $A= \mathbb{F}[x, y]/(x^p, y^p)$, where $\mathbb{F}$ is a field of characteristic $p\geq 2$, the derived algebra $D:= A_L^{2}$ is not stable under the Poisson closure since $x^{p-1}y^{p-1}\in S_D\setminus D$. One can easily check this using that 
$$[x^a y^b, x^c y^d]  = (ad-bc) x^{a+c-1} y^{b+d-1}.$$

Likewise, for the Laurent polynomials in two variables $B=\mathbb{C}[x^{\pm}, y^{\pm}]$ with its standard Lie bracket (\ref{eqbr}), we have $x^{-1}y^{-1}\in S_D\setminus D$, where $D = B_L^{2}$.

The previous examples show that the derived algebra is not stable in general, which makes its Poisson closure a genuinely nontrivial object to consider.

\begin{propo}\label{wsu}
    Let $\mathcal{P}$ be a Poisson algebra over a field of characteristic $p$. 
    \begin{enumerate}
        \item If $\mathcal{P}_L$ is solvable and $p\neq 2$, then $(S_{D})_A$ is nil.

        \item If $\mathcal{P}_L$ is solvable and $p\geq 3$, then $(S_{D})_A$ is nil of bounded index.

        \item If $\mathcal{P}_L$ is nilpotent and $p>0$, then $(S_{D})_A$ is nil of bounded index.
    \end{enumerate}   
\end{propo}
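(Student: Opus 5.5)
The plan is to reduce everything to one elementary observation: a commutator that is central, in a suitable quotient, has square zero. Then I would induct on the derived (respectively lower central) series, passing to quotients by Poisson ideals generated by square-zero elements.

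\emph{Basic observation.} Let $x,y\in\mathcal P$ and suppose $c=[x,y]$ satisfies $[y,c]=0$. Since $Q_y$ is a derivation of $\mathcal P_A$ with $Q_y(x)=-c$ and $Q_y(c)=0$, we have $Q_y^2(x)=0$. Hence
$$Q_y^2(x^2)=2Q_y(x)^2+2xQ_y^2(x)=2c^2 .$$
If moreover $Q_y^2(x^2)=0$, which holds whenever $c$ is Lie central, then $c^2=0$ in characteristic $\neq 2$. Next, if $K$ is a Poisson ideal of a commutative algebra generated associatively by square-zero elements, then $K$ is nil. This is because in a commutative algebra sums and multiples of nilpotent elements are nilpotent. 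In characteristic $p\ge 3$ the nilpotency index is even uniformly bounded: for $k=\sum r_ia_i$ with $a_i^2=0$, the Frobenius map gives $k^p=\sum r_i^pa_i^p=0$.

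\emph{Induction for (i) and (ii).} Assume $\mathcal P_L^{(m)}=0$, and write $A=\mathcal P_L^{(m-1)}=[B,B]$ with $B=\mathcal P_L^{(m-2)}$. I will need a version of the observation in which $c$ is central only modulo an ideal. So I first pass to a quotient in which $A$ is central. Since $A$ is an abelian Lie ideal, for $a\in A$ we have $Q_a(\mathcal P)\subseteq A$ and $Q_a(A)=0$, so $Q_a^2=0$ on $\mathcal P$. Then $0=Q_a^2(x^2)=2[a,x]^2$, so every element of $[\mathcal P,A]$ has square zero.
\begin{itemize}
\item By Jacobi, $[\mathcal P,A]$ is a Lie ideal, so $J=[\mathcal P,A]+\mathcal P\cdot[\mathcal P,A]$ is a Poisson ideal generated by square-zero elements, hence nil (of exponent $p$ when $p\ge3$).
\item In $\bar{\mathcal P}=\mathcal P/J$, the image $\bar A$ is Lie central and spanned by commutators $[b,c]$ with $b,c\in B$. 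By the basic observation each such commutator has square zero. So the Poisson ideal $\bar A+\bar{\mathcal P}\bar A$ is again nil.
\item The quotient of $\bar{\mathcal P}$ by this ideal has derived length at most $m-1$.
\end{itemize}
By induction, the image of $D$ there generates a nil associative subalgebra, with bounded index if $p\ge3$. Since nil-by-nil is nil, and bounded-by-bounded is bounded (indices multiply), $S_D$ is nil. Indeed $S_D$ is generated by $D$, and every element of $D$ is nilpotent modulo a nil ideal. In characteristic $0$ this gives (i), and the Frobenius bound gives (ii).

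\emph{Part (iii), and the main obstacle.} For $\mathcal P_L$ nilpotent I would run the same induction along the lower central series. At each step the last nonzero term $\mathcal P_L^{c}$ is central, and one quotients by the Poisson ideal it generates. For $p$ odd this is covered by the above. The genuine difficulty is $p=2$, where $Q_y^2(x^2)=2c^2$ gives nothing. There I would try to exploit that $x^{p^e}$ is Lie central in characteristic $p$, because $[x^{p},z]=px^{p-1}[x,z]=0$. I would also try to use the nilpotency $Q_y^{n}=0$ to compute $Q_y^{n}$ on $x^{N}$ for suitable $N$, not divisible by $2$, so as to isolate a term of the form $(\text{unit})\cdot x^{N-n}c^{n}$. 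Failing that, I would argue via $(x+\lambda y)^{2^e}$-type expansions to show $c^{2^e}=0$ with $e$ depending only on the nilpotency class. This characteristic-$2$ step is where I expect the real work to be.
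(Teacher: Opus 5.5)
There is a genuine gap, and it is in the second bullet of your induction, which is where all the difficulty of (i) and (ii) lies.

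\textbf{The square-zero step in (i) and (ii).} Your identity $Q_y^2(x^2)=2c^2$ (when $[y,c]=0$) is correct. The parenthetical ``which holds whenever $c$ is Lie central'' is circular, since by that same identity it just says $c^2=0$. It is also false. Take $\mathcal P=\mathbb F[x,y,t]/(t^3)$ with $[f,g]=t(f_xg_y-f_yg_x)$.
\begin{itemize}
\item We have $[\mathcal P,\mathcal P]\subseteq t\mathcal P$ and $[t\mathcal P,t\mathcal P]=0$, so $\mathcal P_L$ is solvable.
\item $[x,y]=t$ is Lie central, yet $Q_y^2(x^2)=2t^2\neq 0$.
\end{itemize}
In your first bullet the computation is legitimate: the element acting twice lies in the abelian ideal $A$, and the square $x^2$ is taken in $\mathcal P$. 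In the second bullet, however, both $b,c$ lie in $\bar B$, which is only a Lie ideal. In general $b^2\notin\bar B$, so $[c,[c,b^2]]$ lies in $[\bar B,\bar B]=\bar A$ and is merely central, not zero. So you get nothing.

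The only property of $\bar B$ you actually use is that it is a Lie ideal with $[\bar B,\bar B]$ Lie central, and that is not enough. In the example above, $L=\mathbb F x+\mathbb F y+t\mathcal P$ is a Lie ideal with $[L,L]\subseteq \mathbb F t+t^2\mathcal P$, which is central, but $[x,y]^2=t^2\neq 0$. Hence the claim that $\bar A+\bar{\mathcal P}\,\bar A$ is generated by square-zero elements is unsupported, and the induction for (i) and (ii) collapses with it. Repairing it would need an argument that really uses that $\bar B$ is a term of the derived series. That is essentially the nil theorem for Lie solvable Poisson algebras itself.

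\textbf{Part (iii).} This is not established either. For odd $p$ it is already contained in (ii), since nilpotent implies solvable. So its only new content is $p=2$, where you give only tentative strategies. For odd $p$, your lower-central-series version does work if you let $u\in\mathcal P_L^{c-1}$ act twice on $y^2$, because $[u,y^2]\in\mathcal P_L^{c}$ is central. But this adds nothing beyond (ii).

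\textbf{What the paper does.} The paper does not attempt a self-contained argument. It quotes \cite[Theorem 3.6]{SU21b}: $\mathcal P[\mathcal P,\mathcal P]$ is a nil ideal of $\mathcal P_A$, of bounded index under the hypotheses of (ii) and (iii). It then only needs two observations:
\begin{itemize}
\item $S_D\subseteq D+\mathcal P D$;
\item adjoining a unit, $\mathcal P'=\mathcal P+\mathbb F1$ is still Lie solvable (resp. nilpotent), and $S_D\subseteq\mathcal P'[\mathcal P',\mathcal P']$.
\end{itemize}
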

\begin{proof}
    Assume $\mathcal{P}_L$ is solvable.  Then $\mathcal{P} [\mathcal{P}, \mathcal{P}]$ is a nil ideal of $\mathcal{P}_A$, by \cite[Theorem 3.6 1)]{SU21b}. 
    Notice that $S_{D}\subseteq [\mathcal{P}, \mathcal{P}] + \mathcal{P}[\mathcal{P}, \mathcal{P}]$. Now, if $\mathcal{P}$ has a unit, we have $[\mathcal{P}, \mathcal{P}]  \subseteq \mathcal{P}[\mathcal{P}, \mathcal{P}]$.  Otherwise, we can consider the unital Poisson algebra $\mathcal{P}'=\mathcal{P}+ \mathbb{F}1$. This algebra is Lie solvable, so $\mathcal{P}'[\mathcal{P}', \mathcal{P}']$ is associative nil. Then it follows that $$S_{D} = S_{[\mathcal{P}', \mathcal{P}']}\subseteq [\mathcal{P}', \mathcal{P}'] + \mathcal{P}'[\mathcal{P}', \mathcal{P}']\subseteq \mathcal{P}'[\mathcal{P}', \mathcal{P}'].$$
    Therefore, we conclude that $(S_{D})_A$ is nil.

    Analogously, we can prove the other statements using \cite[Theorem 3.6]{SU21b}. 
\end{proof}

Proposition~\ref{wsu} naturally raises the question of 
whether, for a Lie-solvable Poisson algebra, the associative algebra
$(S_D)_A$ is not only nil but actually nilpotent. We show that this is
not the case, even over a field of characteristic zero. For this purpose,
we recall the Poisson--Grassmann algebra introduced in
\cite[Section 6]{MP07}.

Let $\mathbb F$ be a field of characteristic $p\neq2$ and let $V= \textrm{span}( e_1,e_2,\ldots)$. Let $\Lambda(V)$ be the unital Grassmann algebra over $V$, whose multiplication will be denoted by $\wedge$. For $a,b\in\Lambda(V)$, define
$$
a\cdot b=\frac{1}{2}(a\wedge b+b\wedge a)
\qquad\textrm{and}\qquad
[a,b]=a\wedge b-b\wedge a.
$$
Then $\mathbf G=(\Lambda(V),\cdot, [\cdot, \cdot])$ is a Poisson algebra, called the \emph{Poisson--Grassmann algebra}.

\begin{propo}
$\mathbf G_L^2\neq0$ and $\mathbf G_L^3=0$. Moreover, we have
$$
\mathbf G_L^2=
\textrm{span}( e_{i_1}\wedge\cdots\wedge e_{i_{2n}}:
i_1<\cdots<i_{2n},\ n\geq1).
$$
Furthermore, $(\mathbf G_L^2)_A$ is nil but not nilpotent.
\end{propo}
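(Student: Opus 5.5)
Throughout, write $e_I=e_{i_1}\wedge\cdots\wedge e_{i_k}$ for $I=\{i_1<\cdots<i_k\}$; these monomials, together with $1$, form a basis of $\Lambda(V)$. The whole argument rests on the graded-commutativity rule $e_I\wedge e_J=(-1)^{|I||J|}e_J\wedge e_I$, and on the fact that $e_I\wedge e_J=0$ when $I\cap J\neq\emptyset$. From these we get:
\begin{itemize}
\item $[e_I,e_J]=0$ unless $|I|$ and $|J|$ are both odd, in which case $[e_I,e_J]=2\,e_I\wedge e_J$;
\item $e_I\cdot e_J=0$ if $|I|,|J|$ are both odd, and $e_I\cdot e_J=e_I\wedge e_J$ otherwise.
\end{itemize}
Let $\Lambda^{\mathrm{ev}}$ denote the span of the monomials of even degree, and $\Lambda^{+}_{\mathrm{ev}}$ the span of those of even degree $\geq 2$.

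\textbf{Description of $\mathbf G_L^2$.} By the first bullet, $[\Lambda(V),\Lambda(V)]$ is spanned by the elements $2e_I\wedge e_J$ with $|I|,|J|$ odd. Each such element is a multiple of a monomial of even degree $\geq2$, so $\mathbf G_L^2\subseteq\Lambda^{+}_{\mathrm{ev}}$.

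For the reverse inclusion, take $i_1<\cdots<i_{2n}$ with $n\ge1$. Then
$$e_{i_1}\wedge\cdots\wedge e_{i_{2n}}=\tfrac12\,[\,e_{i_1},\,e_{i_2}\wedge\cdots\wedge e_{i_{2n}}\,],$$
since both factors have odd degree and the characteristic is not $2$. This gives the stated spanning set, and since $2e_1\wedge e_2\neq0$ it also gives $\mathbf G_L^2\neq0$.

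\textbf{$\mathbf G_L^3=0$.} Here $\mathbf G_L^3=[\mathbf G_L^2,\mathbf G]$. Every element of $\mathbf G_L^2$ has only even-degree components, and a bracket of monomials vanishes unless both degrees are odd. Hence $[\mathbf G_L^2,\mathbf G]=0$.

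\textbf{$(\mathbf G_L^2)_A$ is nil.} On even elements the product $\cdot$ coincides with $\wedge$, and $\Lambda^{+}_{\mathrm{ev}}$ is closed under $\wedge$. So $(\mathbf G_L^2)_A=(\Lambda^+_{\mathrm{ev}},\wedge)$, a commutative algebra. Take $a\in\Lambda^+_{\mathrm{ev}}$. It is a finite combination of monomials in some $e_1,\dots,e_N$, each of degree $\geq2$. Therefore $a^{\wedge k}$ is a combination of monomials of degree $\geq2k$ in $N$ generators, and $a^{\wedge k}=0$ as soon as $2k>N$.

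\textbf{$(\mathbf G_L^2)_A$ is not nilpotent.} This is the main point needing care, because the nilpotency index has to be shown unbounded. For each $k$, set $x_j=e_{2j-1}\wedge e_{2j}\in\mathbf G_L^2$ for $j=1,\dots,k$. Then
$$x_1\cdot x_2\cdots x_k=e_1\wedge e_2\wedge\cdots\wedge e_{2k}\neq0,$$
because the index sets are pairwise disjoint and all the factors have even degree, so no sign or cancellation appears. Thus $(\mathbf G_L^2)_A^{\,k}\neq0$ for every $k$, and $(\mathbf G_L^2)_A$ is nil but not nilpotent.

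This shows that in Proposition~\ref{wsu} nil cannot be upgraded to nilpotent. Indeed, $D=\mathbf G_L^2$ is already closed under $\cdot$, so $S_D=D$, and $\mathbf G_L$ is nilpotent.
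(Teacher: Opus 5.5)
Your proof is correct and follows the same route as the paper. Both use the parity rules for $\cdot$ and $[\cdot,\cdot]$ on homogeneous elements, identify $\mathbf G_L^2$ with the even monomials of positive degree (so $[\mathbf G_L^2,\mathbf G]=0$), and use $(e_1\wedge e_2)\cdots(e_{2k-1}\wedge e_{2k})=e_1\wedge\cdots\wedge e_{2k}$ for non-nilpotency. You also supply details the paper leaves implicit, namely the reverse inclusion via $\tfrac12[e_{i_1},e_{i_2}\wedge\cdots\wedge e_{i_{2n}}]$ and the degree count showing nil. Your closing remark that $S_D=D$ with $\mathbf G_L$ nilpotent is a valid extra observation.
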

\begin{proof}
    If $a,b$ are homogeneous elements of $\Lambda(V)$, then $a\cdot b=0$ and $[a,b]=2a\wedge b$ if both $a$ and $b$ have odd degree, whereas $a\cdot b=a\wedge b$ and $[a,b]=0$ otherwise. It follows that $\mathbf G_L^2$ is precisely the span of the Grassmann monomials of positive even degree. In particular, we have $[\mathbf G_L^2,\mathbf G]=0$, so $\mathbf G_L^3=0$, whereas $[e_1,e_2]=2e_1\wedge e_2\neq0$, so $\mathbf G_L^2\neq 0$.

    Every element of $\mathbf G_L^2$ is associatively nilpotent. However, $(\mathbf G_L^2)_A^n\neq0$ for every $n\geq1$, as
    $$(e_1\wedge e_2)\cdot(e_3\wedge e_4)\cdots(e_{2n-1}\wedge e_{2n})
    =e_1\wedge\cdots\wedge e_{2n}\neq0.$$
    Thus $(\mathbf G_L^2)_A$ is nil but not nilpotent.
\end{proof}

Now, consider the tensor product $\mathbf T=\mathbf G\otimes\mathbf G$. Recall that the tensor product of two Poisson algebras is again a Poisson algebra, with multiplications given, for $a,b,c,d\in\mathbf G$, by
$$
(a\otimes b)\cdot(c\otimes d)=(a\cdot c)\otimes(b\cdot d), \qquad
[a\otimes b,c\otimes d]=[a,c]\otimes(b\cdot d)+(a\cdot c)\otimes[b,d].
$$

\begin{propo}
The Poisson algebra $\mathbf T$ satisfies the following properties:
\begin{enumerate}
\item $\mathbf T_L$ is solvable non-nilpotent. More precisely,
$\mathbf T_L^{(2)}\neq 0$, $\mathbf T_L^{(3)}=0$, and
$\mathbf T_L^n\neq 0$ for $n\geq 1$;
\item $(S_{\mathbf T_L^2})_A$ is nil but not nilpotent;
\item the Poisson ideal generated by $\mathbf T_L^{(2)}$ is not nilpotent as an associative algebra.
\end{enumerate}
\end{propo}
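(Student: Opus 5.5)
The plan is to use the $\mathbb Z_2$-grading of $\Lambda(V)$. Write $\mathbf G=E\oplus O$, where $E$ is the even part (including scalars), $E_+$ is the even part of positive degree, and $O$ is the odd part. From the proof of the previous proposition, on homogeneous elements:
\begin{itemize}
\item $O\cdot O=0$;
\item every other product is $\wedge$, so $E\cdot E\subseteq E$ and $E\cdot O\subseteq O$;
\item $[O,O]\subseteq E_+$ with $[a,b]=2a\wedge b$;
\item all other brackets vanish.
\end{itemize}
Inserting this into the tensor bracket gives the basic rule. For $a\otimes b$ and $c\otimes d$ homogeneous:
\begin{itemize}
\item the term $[a,c]\otimes(b\cdot d)$ survives only if $a,c\in O$ and $b,d$ are not both odd;
\item the term $(a\cdot c)\otimes[b,d]$ survives only if $b,d\in O$ and $a,c$ are not both odd.
\end{itemize}
All computations reduce to bookkeeping with this rule. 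Nonvanishing always comes from Grassmann monomials in pairwise distinct generators $e_i$.

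For (i), I first show that $\mathbf T_L^2\subseteq E_+\otimes\mathbf G+\mathbf G\otimes E_+$. Hence a homogeneous element of $\mathbf T_L^2$ with odd first factor lies in $O\otimes E_+$, and one with odd second factor lies in $E_+\otimes O$. Applying the rule again, $\mathbf T_L^{(2)}\subseteq E_+\otimes E_+$. For example, $[o\otimes f,o'\otimes f']=2\,o\wedge o'\otimes f\wedge f'$ when $f,f'\in E_+$, because the second term needs odd second factors. Since brackets between elements of $E_+\otimes E_+$ vanish, $\mathbf T_L^{(3)}=0$.

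For $\mathbf T_L^{(2)}\neq0$, I note that $e_a\otimes e_c\wedge e_d=\tfrac12[1\otimes e_c,e_a\otimes e_d]\in\mathbf T_L^2$. Then
$$[e_a\otimes e_c\wedge e_d,\;e_b\otimes e_g\wedge e_h]=2\,e_a\wedge e_b\otimes e_c\wedge e_d\wedge e_g\wedge e_h\neq0 .$$
For non-nilpotency, I build a chain. Start with $z_0=e_1\otimes1$ and set $z_k=[z_{k-1},e_{i_k}\otimes e_{j_k}]$, using fresh indices at every step. The factors then alternate between type odd$\otimes$even and type even$\otimes$odd, and each $z_k$ is a nonzero multiple of a tensor of Grassmann monomials in distinct generators. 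Hence $z_k\in\mathbf T_L^{k+1}\setminus\{0\}$ for every $k$. The step I expect to be the main obstacle is verifying the parity switch at each stage, namely that exactly one term of the bracket survives and that it is nonzero.

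For (ii), nilness of $(S_{\mathbf T_L^2})_A$ follows directly from Proposition~\ref{wsu}(i), since $\mathbf T_L$ is solvable and the characteristic is not $2$. For non-nilpotency, I use $e_{2i-1}\wedge e_{2i}\otimes1=\tfrac12[e_{2i-1}\otimes1,e_{2i}\otimes1]\in\mathbf T_L^2$. The product of $n$ of these is $e_1\wedge\cdots\wedge e_{2n}\otimes1\neq0$, so $(S_{\mathbf T_L^2})_A^n\neq0$ for every $n$.

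For (iii), I show that $(\mathbf T_L^{(2)})_A$ is already not nilpotent; the Poisson ideal it generates then inherits this. Take elements $2\,e_a\wedge e_b\otimes e_c\wedge e_d\wedge e_g\wedge e_h\in\mathbf T_L^{(2)}$ as computed in (i), with pairwise disjoint index sets for different elements. A product of $n$ such elements is a nonzero multiple of a tensor of two Grassmann monomials in distinct generators, and hence is nonzero for every $n$.
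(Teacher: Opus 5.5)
Your proposal is correct and is essentially the paper's argument. Your parity bookkeeping is an unpacked form of $\mathbf T_L^2=\mathbf G_L^2\otimes\mathbf G+\mathbf G\otimes\mathbf G_L^2$ together with $E_+=\mathbf G_L^2$ and $[\mathbf G_L^2,\mathbf G]=0$, and your chain $z_k$ is the paper's $u_n$ with fresh indices; the parity switch you flagged is immediate, since at each step exactly one bracket term survives and doubles the coefficient. As in the paper, nilness in (ii) comes from Proposition~\ref{wsu}. The only cosmetic difference is that you witness associative non-nilpotency in (ii) via $\mathbf G_L^2\otimes 1\subseteq\mathbf T_L^2$ and handle (iii) separately, whereas the paper gets both at once from $(\mathbf G_L^2)_A^2\otimes\mathbf G_L^2\subseteq\mathbf T_L^{(2)}\subseteq S_{\mathbf T_L^2}$.
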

\begin{proof}
    Since $\mathbf G$ has an identity element, we have
$\mathbf T_L^2=\mathbf G_L^2\otimes\mathbf G+\mathbf G\otimes\mathbf G_L^2$.
Using $[\mathbf G_L^2,\mathbf G]=0$, it follows that
$$
\mathbf T_L^{(2)}
=(\mathbf G_L^2)_A^2\otimes\mathbf G_L^2+
\mathbf G_L^2\otimes(\mathbf G_L^2)_A^2.
$$
Hence, we have $\mathbf T_L^{(2)}\neq0$ and $\mathbf T_L^{(3)}=0$, because we have $[(\mathbf G_L^2)_A^2, \mathbf G]=0$ by Leibniz rule.

On the other hand, $\mathbf T_L^n\neq0$ for every $n$. Indeed, put $u_2=[e_1\otimes e_1,e_2\otimes1]$ and, recursively, $u_n=[u_{n-1},e_n\otimes e_{n-1}]$ for $n\geq3$. A straightforward induction yields
$$
u_n=2^{n-1}(e_1\wedge\cdots\wedge e_n)\otimes
(e_1\wedge\cdots\wedge e_{n-1})\neq0.
$$
Since $u_n\in\mathbf T_L^n$, this proves $i)$.

By Proposition~\ref{wsu}, $(S_{\mathbf T_L^2})_A$ is nil. However,
$(\mathbf G_L^2)_A^2\otimes\mathbf G_L^2\subseteq
\mathbf T_L^{(2)}\subseteq S_{\mathbf T_L^2}$ and, for $n\geq1$,
$$
\left((\mathbf G_L^2)_A^2\otimes\mathbf G_L^2\right)_A^n
=(\mathbf G_L^2)_A^{2n}\otimes(\mathbf G_L^2)_A^n\neq0.
$$

Thus $(S_{\mathbf T_L^2})_A$ is not nilpotent, proving $ii)$. Since the Poisson ideal generated by $\mathbf T_L^{(2)}$ contains $(\mathbf G_L^2)_A^2\otimes\mathbf G_L^2$, it is not associative nilpotent either. This proves $iii)$.
\end{proof}

Thus the conclusion of Proposition~\ref{wsu} cannot in general be strengthened from associative nil to associative nilpotent. The same example also shows that the last Lie bracket in \cite[Theorem 3.3]{SU21b} is needed: although the Poisson ideal generated by $[\mathbf T_L^{(2)},\mathbf T]$ is associative nilpotent, the Poisson ideal generated by $\mathbf T_L^{(2)}$ is not.

\section{Poisson $A$-algebras}
\label{sec5}

We now apply the previous results to describe a distinguished class of Poisson
algebras: those whose nilpotent subalgebras are zero algebras. We first establish some basic properties and then obtain a structural
description in the finite-dimensional case. We conclude with results on solvable Poisson $A$-algebras.

\subsection{Basic properties} We begin by introducing Poisson $A$-algebras and recording some elementary properties that will be used throughout the section.

\begin{definition} We say that a Poisson algebra $\mathcal{P}$ is an $A$-algebra if every nilpotent subalgebra of it is a zero algebra. Equivalently, for every nilpotent subalgebra $U$ of $\mathcal{P}$, we have $U^2_A + [U, U] = 0$. 
\end{definition}

\begin{lemma}\label{l:fac}If $\mathcal{P}$ is a finite-dimensional Poisson $A$-algebra over any field and $B$ is an ideal of $\mathcal{P}$, then $\mathcal{P}/B$ is a Poisson $A$-algebra
\end{lemma}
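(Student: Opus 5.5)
The plan is to follow the Lie-algebra proof that factor algebras of $A$-algebras are $A$-algebras, which rests on Frattini-type arguments, and adapt it to Poisson subalgebras. Let $B$ be an ideal of $\mathcal P$. Let $U/B$ be a nilpotent Poisson subalgebra of $\mathcal P/B$, with $U$ a Poisson subalgebra of $\mathcal P$ containing $B$. We must show that $U/B$ is a zero algebra, i.e.\ $U_A^2+[U,U]\subseteq B$. The idea is to find a \emph{nilpotent} Poisson subalgebra $C$ of $U$ with $C+B=U$. Then the $A$-property of $\mathcal P$ makes $C$ a zero algebra, and hence $U/B=(C+B)/B\cong C/(C\cap B)$ is a zero algebra.

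To construct $C$, use finite dimensionality to choose a Poisson subalgebra $C\subseteq U$ of minimal dimension subject to $C+B=U$. First I claim that $C\cap B\subseteq \Phi(C)$, where $\Phi(C)$ denotes the intersection of all maximal Poisson subalgebras of $C$ (taken to be $C$ if there are none). Indeed, suppose $M$ is a maximal Poisson subalgebra of $C$ with $C\cap B\not\subseteq M$. Then $M+(C\cap B)$ is a Poisson subalgebra of $C$, since $C\cap B$ is an ideal of $C$, and it properly contains $M$; hence it equals $C$. It follows that $M+B=U$, contradicting the minimality of $C$. Moreover $C/(C\cap B)\cong U/B$ is nilpotent.

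It then remains to prove a Poisson version of the classical Frattini nilpotency criterion: if $C$ is a finite-dimensional Poisson algebra and $K\subseteq\Phi(C)$ is an ideal with $C/K$ nilpotent, then $C$ is nilpotent. I expect this step to be the main obstacle. I would approach it through maximal subalgebras in three steps.
\begin{itemize}
\item First, show that a finite-dimensional Poisson algebra is nilpotent if and only if every maximal Poisson subalgebra is an ideal. For the converse direction, $C_A$ and $C_L$ act on $C/M$ for each maximal $M$, and nilpotency of the quotient forces the actions to be nilpotent; one then uses an Engel-type argument via \cite[Theorem 2.13]{oo}, combining associative nilpotency with Lie nilpotency.
\item Second, observe that since $K\subseteq\Phi(C)$, the maximal subalgebras of $C$ are exactly the preimages of the maximal subalgebras of $C/K$. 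These are ideals of $C/K$ because $C/K$ is nilpotent, so they are ideals of $C$.
\end{itemize}
If the ``maximal subalgebras are ideals'' characterization turns out to be hard to prove directly, I would instead try the following. Show $C_A$ is nilpotent: $C_A/K$ is nilpotent, and $K\subseteq\Phi$ should force $K\subseteq$ the associative Frattini, so the associative Frattini theorem applies. Then show $C_L$ is nilpotent via the Lie Frattini theorem of Barnes, after checking that $K$ lies in the Frattini subalgebra of $C_L$. Finally, conclude that $C$ is nilpotent by \cite[Theorem 2.13]{oo}.

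Once $C$ is known to be nilpotent, it is a zero algebra, because $\mathcal P$ is a Poisson $A$-algebra. Hence $U/B$ is a zero algebra, which proves that $\mathcal P/B$ is a Poisson $A$-algebra.
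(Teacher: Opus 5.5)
Your reduction is exactly the paper's. You choose $C\subseteq U$ minimal with $C+B=U$, so that $C\cap B\subseteq\phi(C)$ and $C/(C\cap B)\cong U/B$ is nilpotent; you prove the inclusion directly, whereas the paper quotes \cite[Lemma 3.5]{nsfp}. You then conclude from a Frattini-type nilpotency criterion. The paper's separate case $B\subseteq\phi(U)$ is just the case $C=U$ of your choice. The paper takes the criterion from \cite[Theorem 4.1]{nsfp}, and citing it completes your proof.

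The two routes you sketch for proving the criterion yourself, however, do not work as written. The first rests on the claim that a finite-dimensional Poisson algebra is nilpotent if and only if all its maximal subalgebras are ideals, and the ``if'' direction is false. The algebra $\mathbb F e$ with $e^2=e$ and zero bracket has $0$ as its only maximal subalgebra, which is an ideal, yet it is not nilpotent. In your situation each $C/M$ is a one-dimensional zero algebra, but that only says $C^{(1)}\subseteq\phi(C)$. So the unspecified ``Engel-type argument'' would have to carry the entire Frattini theorem, including the associative part, where idempotents are the obstruction. The second route needs $K$ to lie in the Frattini subalgebras of $C_A$ and of $C_L$. But $\phi(C)$ need not be contained in either, because $C$ has fewer subalgebras than $C_A$ or $C_L$. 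For the Heisenberg algebra $[x,y]=z$ with zero associative product and $K=\mathbb F z$, one has $K=\phi(C)$ but $\phi(C_A)=0$. For $C$ spanned by $u,v$ with $u^2=v$, all other products and brackets zero, and $K=\mathbb F v$, one has $K=\phi(C)$ but $\phi(C_L)=0$.

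A working argument treats the two structures separately. By your second bullet, every maximal subalgebra $M$ of $C$ is an ideal.

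\emph{Lie part.} For $x\in C$, the Engel subalgebra $E_x$ of $Q_x$ is a Poisson subalgebra containing $x$, as in the proof that Engel subalgebras are stable. If $E_x\neq C$, it lies in some maximal $M$. Then, with $n=\dim C$, we get $C=E_x+Q_x^n(C)\subseteq M+[x,C]\subseteq M$, which is absurd. So every $Q_x$ is nilpotent, and $C_L$ is nilpotent by Engel's theorem.

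\emph{Associative part.} Here you must use $K\subseteq\phi(C)$ again. If $C_A$ is not nilpotent, it has an idempotent $e\neq0$, and $e\in K$ because $C/K$ is nilpotent. By Lemma~\ref{idem} and the computation in Lemma~\ref{peirce}, $C=eC\oplus(1-e)C$ with both summands ideals, where $(1-e)C=\{y-ey:y\in C\}$. For any maximal subalgebra $N$ of $eC$, the subalgebra $N\oplus(1-e)C$ is maximal in $C$ but does not contain $eC\subseteq K$. This contradicts $K\subseteq\phi(C)$, so $C_A$ is nilpotent.

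Lemma~\ref{lemLs} then shows that $C$ is nilpotent.
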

\begin{proof} Let $U$ be a subalgebra of $\mathcal{P}$ such that $U/B$ is nilpotent. If $B\subseteq \phi(U)$ then $U$ is nilpotent, by \cite[Theorem 4.1]{nsfp}, and hence a zero algebra. 
\par

So suppose that $B \not \subseteq \phi(U)$. Then there is a maximal subalgebra $M$ of $U$ such that $U=B+M$. Choose $C$ to be a subalgebra of $U$ which is minimal with respect to $U=B+C$. Then $B\cap C\subseteq \phi(C)$ and $U/B \cong C/(B\cap C)$, by \cite[Lemma 3.5]{nsfp}. It follows, by \cite{nsfp} again, that $C$ is nilpotent and hence a zero algebra.
\par

Hence, in either case, $U/B$ is a zero algebra and $\mathcal{P}/B$ is an $A$-algebra.  
\end{proof}

\begin{lemma}\label{l:lemm2} Let $B$, $C$ be ideals of the Poisson algebra $\mathcal{P}$.
\begin{itemize}
\item[(i)] If $\mathcal{P}/B$, $\mathcal{P}/C$ are $A$-algebras, then $\mathcal{P}/(B \cap C)$ is an $A$-algebra.
\item[(ii)] If $\mathcal{P} = B \oplus C$, where $B, C$ are $A$-algebras, then $\mathcal{P}$ is an $A$-algebra.
\end{itemize}
\end{lemma}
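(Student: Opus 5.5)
For (i), I will take a subalgebra $U$ of $\mathcal{P}$ with $B\cap C\subseteq U$ and $U/(B\cap C)$ nilpotent, and show that $U/(B\cap C)$ is a zero algebra. The idea is to embed $\mathcal{P}/(B\cap C)$ into $\mathcal{P}/B\oplus\mathcal{P}/C$ via $x+(B\cap C)\mapsto (x+B,x+C)$. This is an injective homomorphism of Poisson algebras, since both products are defined componentwise.

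The image of $U$ under the first projection is $(U+B)/B\cong U/(U\cap B)$. This is a homomorphic image of $U/(B\cap C)$, because $B\cap C\subseteq U\cap B$. Since homomorphic images of nilpotent Poisson algebras are nilpotent (the series $\mathcal{P}^n$ maps onto the corresponding series of the image), $(U+B)/B$ is a nilpotent subalgebra of $\mathcal{P}/B$. As $\mathcal{P}/B$ is an $A$-algebra, $(U+B)/B$ is a zero algebra, so
$$U\cdot U+[U,U]\subseteq B.$$
The same argument with $C$ gives $U\cdot U+[U,U]\subseteq C$. Hence $U\cdot U+[U,U]\subseteq B\cap C$, which says exactly that $U/(B\cap C)$ is a zero algebra. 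No finite-dimensionality is needed here.

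For (ii), I will let $U$ be a nilpotent subalgebra of $\mathcal{P}=B\oplus C$, and let $\pi_B,\pi_C$ be the projections. Since $BC\subseteq B\cap C=0$ and $[B,C]=0$, the sum is a direct sum of Poisson algebras, so both projections are homomorphisms. Then $\pi_B(U)$ is a nilpotent subalgebra of $B$, hence a zero algebra, and likewise $\pi_C(U)$ is a zero algebra. For $u,v\in U$ we get
$$uv=\pi_B(u)\pi_B(v)+\pi_C(u)\pi_C(v)=0,$$
and similarly $[u,v]=0$. So $U$ is a zero algebra. Alternatively, (ii) follows from (i): $\mathcal{P}/B\cong C$ and $\mathcal{P}/C\cong B$ are $A$-algebras, and $B\cap C=0$.

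I do not expect a real obstacle. The only point that needs checking is that nilpotency passes to quotients and projections for the mixed series, which holds because a surjective homomorphism maps $U^n$ onto $(\text{image})^n$.
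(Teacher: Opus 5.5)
Your proposal is correct and matches the paper's proof: for (i) the paper likewise notes that $(U+B)/B$ is a nilpotent subalgebra of the $A$-algebra $\mathcal{P}/B$, deduces $U\cdot U+[U,U]\subseteq B$ (and similarly for $C$), and for (ii) it simply says the claim follows from (i), as in your alternative remark. Your direct projection argument for (ii) and your check that nilpotency passes to homomorphic images are valid additions, while the embedding into $\mathcal{P}/B\oplus\mathcal{P}/C$ is harmless but not actually needed.
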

\begin{proof} (i) Let $U/(B \cap C)$ be a nilpotent subalgebra of $\mathcal{P}/(B \cap C)$. Then $(U + B)/B$ is a nilpotent subalgebra of $\mathcal{P}/B$, which is an $A$-algebra. It follows that $U^2 \subseteq B$. Similarly, $U^2 \subseteq C$, whence the result.
\par
(ii) This follows from (i).
\end{proof}

\begin{lemma}\label{cor33}
    If $\mathcal{P}$ is a Poisson $A$-algebra over a field of characteristic $p\neq 2$ in which $\mathcal{P}_A$ is nil, then $\mathcal{P}_A^2 = 0$ and $\mathcal{P}$ is just a Lie algebra. Moreover, $\mathcal{P}_L$ is a Lie $A$-algebra.
\end{lemma}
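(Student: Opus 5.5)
The plan is to show first that every element squares to zero, then use polarization, and finally read off the Lie statement.

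\emph{Squares vanish.} Fix $x\in\mathcal P$. The line $\mathbb F x$ is an abelian, hence nilpotent, finite-dimensional Lie subalgebra of $\mathcal P_L$. Since $\mathcal P_A$ is a nilalgebra, the implication (i)$\Rightarrow$(ii) of Theorem~\ref{nilcharacterization} shows that $S_{\mathbb F x}$ is a nilpotent Poisson subalgebra of $\mathcal P$. (Alternatively, one can apply Proposition~\ref{propnil} to the associatively nilpotent algebra $S_{\mathbb F x}=\mathrm{span}(x,x^2,\dots)$.) Because $\mathcal P$ is an $A$-algebra, $S_{\mathbb F x}$ must be a zero algebra. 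In particular $x\cdot x=0$. This holds for every $x\in\mathcal P$.

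\emph{Polarization.} For $x,y\in\mathcal P$, expand $(x+y)^2=x^2+2xy+y^2$ using commutativity. Since all squares vanish, this gives $2xy=0$. As $p\neq2$, we get $xy=0$, so $\mathcal P_A^2=0$. Then the Leibniz rule holds trivially with both sides zero, so $\mathcal P$ is nothing more than the Lie algebra $\mathcal P_L$ equipped with the zero associative product. This is the only point where the characteristic assumption enters, and I expect it to be the sole delicate step.

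\emph{Lie $A$-property.} Let $U$ be a nilpotent Lie subalgebra of $\mathcal P_L$. Since the associative product is zero, $U$ is automatically a Poisson subalgebra, and its mixed lower series satisfies $U^{n}=U^n_L$, because the $\cdot$-terms vanish. Hence $U$ is a nilpotent Poisson subalgebra. By the $A$-property it is a zero algebra, so $[U,U]=0$. Therefore every nilpotent subalgebra of $\mathcal P_L$ is abelian, that is, $\mathcal P_L$ is a Lie $A$-algebra.
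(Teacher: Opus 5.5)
Your proposal is correct and follows the paper's proof essentially step for step. You get $x^2=0$ from Theorem~\ref{nilcharacterization} applied to $\mathbb F x$ together with the $A$-property, and then polarize, using $p\neq 2$, to obtain $\mathcal P_A^2=0$. Finally, you observe that with zero associative product every nilpotent Lie subalgebra is a nilpotent Poisson subalgebra, hence abelian; your parenthetical alternative via Proposition~\ref{propnil} is also valid.
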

\begin{proof}
    Given $x\in \mathcal{P}$, consider $U$ the Lie subalgebra $\mathbb{F}x$. Then $S_U$ is a nilpotent subalgebra of $\mathcal{P}$, by Theorem~\ref{nilcharacterization}. Since $\mathcal{P}$ is an $A$-algebra, $S_U$ is a zero algebra, so we have $x^2=0$, in particular. From here, for any $x, y \in \mathcal{P}$ we obtain $0=(x+y)^2= x^2+y^2+ 2xy = 2xy.$
    Hence, we conclude $xy=0$ and $\mathcal{P}_A^2=0$.

    Finally, let $V$ be a nilpotent Lie subalgebra of $\mathcal P_L$. Since the associative product on $\mathcal P$ is zero, $V$ is a nilpotent Poisson subalgebra of $\mathcal P$. As $\mathcal P$ is a Poisson $A$-algebra, $V$ is a zero algebra, and therefore $[V,V]=0$. Thus every nilpotent subalgebra of $\mathcal P_L$ is abelian, so $\mathcal P_L$ is a Lie $A$-algebra.
\end{proof}

The above result is false over a field of characteristic $2$, as the following example shows.

\begin{remark}
Let $\mathcal{P}$ have basis $x,y,z$ where $[x,y]=y$, $[x,z]=z$, $xy=z$. It is straightforward to check that this is a Poisson algebra over a field of characteristic $2$. The two-dimensional subalgebras of $\mathcal{P}$ are $\mathbb{F}z+\mathbb{F}(\lambda x+\mu y)$ for any $\lambda,\mu\in \mathbb{F}$ and the only one of these that is nilpotent is $\mathbb{F}z+\mathbb{F}y$, which is a zero subalgebra. It follows that $\mathcal{P}$ is an $A$-algebra. However, $\mathcal{P}_A$ is nilpotent but not a zero algebra.
\end{remark}

{Regardless of the characteristic, the following result holds.

\begin{propo}
    If $\mathcal{P}$ is a Poisson $A$-algebra with $\mathcal{P}_A$ nilpotent, then $\mathcal{P}_A^4 = 0$ and $(\mathcal{P}_A^2)_L^2 = 0$. Moreover, if $\mathcal{P}$ is finite-dimensional, then $\mathcal{P}_A^3 = 0$.
\end{propo}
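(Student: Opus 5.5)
The first two assertions come straight from Lemma~\ref{square}. Since $\mathcal{P}_A$ is nilpotent, $\mathcal{P}_A^2$ is a nilpotent ideal of $\mathcal{P}$, and in particular a nilpotent subalgebra. Because $\mathcal{P}$ is an $A$-algebra, $\mathcal{P}_A^2$ is then a zero algebra, so
$$\mathcal{P}_A^2\cdot\mathcal{P}_A^2=0\qquad\text{and}\qquad[\mathcal{P}_A^2,\mathcal{P}_A^2]=0.$$
The first equality is $\mathcal{P}_A^4=0$ and the second is $(\mathcal{P}_A^2)_L^2=0$. This part uses no hypothesis on the characteristic or the dimension.

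For the finite-dimensional statement I will show $x\cdot\mathcal{P}_A^2=0$ for every $x\in\mathcal{P}$. I first record two facts.
\begin{enumerate}
\item $x^2=0$. Indeed, $\mathbb{F}x$ is an abelian Lie subalgebra, so $S_{\mathbb{F}x}$ is nilpotent by Proposition~\ref{propnil}, hence a zero algebra.
\item $P_x$ and $Q_x$ commute. This follows from $P_{[x,x]}=Q_xP_x-P_xQ_x$ and $[x,x]=0$.
\end{enumerate}
Put $N:=\mathcal{P}_A^2$. It is an ideal of $\mathcal{P}$ and a zero algebra. Since $\dim\mathcal{P}<\infty$, take the Fitting decomposition $N=N_0\oplus N_1$ with respect to $Q_x$, where $Q_x$ is nilpotent on $N_0$ and invertible on $N_1$. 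By fact ii), $P_x$ preserves both components.

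On $N_0$ the argument is direct. The space $U=\mathbb{F}x+N_0$ is a Poisson subalgebra: $x^2=0$, $x\cdot N_0\subseteq N_0$, $[x,N_0]\subseteq N_0$, and $N_0$ is a zero algebra. Also $U_A$ is nilpotent, since $\mathcal{P}_A$ is, and $U_L$ is nilpotent, because $N_0$ is an abelian Lie ideal of $U$ on which $Q_x$ acts nilpotently. Hence $U$ is nilpotent (for instance by Lemma~\ref{lemLs}), so it is a zero algebra, which gives $x\cdot N_0=0$.

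The main obstacle is the component $N_1$, on which $Q_x$ is invertible; there one must show $x\cdot N_1=0$. My first attempt would use the relations available on $N_1$:
\begin{itemize}
\item $P_x$ is nilpotent on $N_1$, since $\mathcal{P}_A$ is nilpotent;
\item $P_x$ commutes with the invertible map $Q_x|_{N_1}$;
\item the Leibniz rule gives $Q_{xm}=P_mQ_x+P_xQ_m$ for $m\in N$;
\item $N\cdot N=0=[N,N]$.
\end{itemize}
The goal is to find, for $m\in N_1$, a nilpotent subalgebra containing $x\cdot m$ together with an element whose product with it recovers $x\cdot m$. Two candidates are the Engel subalgebra $E_x$, which is stable and contains $x$ and $N_0$, and subalgebras of the form $\mathbb{F}(x+n)+N_0'$ for $n\in N_1$, where $N_0'$ is the Fitting null component of $N$ for $Q_{x+n}$. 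The nilpotency of such a subalgebra should come from Lemma~\ref{actn}.

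If this fails, I would fall back on the structure theory of the following subsections: reduce to $\mathcal{P}$ with trivial Frattini ideal using Lemma~\ref{l:fac} and \cite{nsfp}, where the nilradical is complemented. I expect the finite-dimensional hypothesis to be used exactly at this point.
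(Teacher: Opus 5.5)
There is a genuine gap in the finite-dimensional part. Your proof of $\mathcal P_A^4=0=(\mathcal P_A^2)_L^2$ is fine, and so is your argument on the Fitting null component $N_0$, which is essentially the paper's closing step (the paper uses the Lie subalgebra generated by $a$ and one element $b_0\in N_0$). The component $N_1$, which you flag as the main obstacle, is never handled, and the routes you sketch are not carried out. For instance $E_x\cap N=N_0$, so the Engel subalgebra gives no information about $x\cdot N_1$. The perturbation $x+n$ and the Frattini reduction are, as written, only hopes. The facts you list for $N_1$ cannot suffice on their own: a nilpotent $P_x$ commuting with an invertible $Q_x|_{N_1}$ need not vanish.

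The missing idea is a Leibniz identity showing that $P_x$ annihilates $Q_x(N)$. For $y,z\in\mathcal P$,
\[
x[x,yz]=[x,xyz]=[xy,xz]+[x^2z,y].
\]
Since $x^2=0$, $xy,xz\in N$ and $[N,N]=0$, this gives $P_xQ_x(yz)=0$. As $N$ is spanned by such products, $P_xQ_x(N)=0$. Because $Q_x$ maps $N_1$ onto itself, $x\cdot N_1=x\cdot Q_x(N_1)=0$. Together with your $x\cdot N_0=0$, this yields $x\cdot N=0$ for all $x$, i.e. $\mathcal P_A^3=0$.

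This identity is valid in every characteristic, and that is what is needed. In characteristic $\neq 2$ you could simply use $2x[x,y]=[x^2,y]=0$, but there the statement is already trivial, since Lemma~\ref{cor33} gives $\mathcal P_A^2=0$. The real content is in characteristic $2$. There $x^2=0$ does not in general give $P_xQ_x=0$ on all of $\mathcal P$, so one must use that elements of $N$ are sums of products.
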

\begin{proof}
    Notice that $B = \mathcal{P}_A^2$ is a nilpotent subalgebra of $\mathcal{P}_L$, because $[\mathcal{P}_A^i, \mathcal{P}_A^j]\subseteq \mathcal{P}_A^{i+j-1}$ and $\mathcal{P}_A$ is nilpotent. So $S_B$ is a Poisson nilpotent subalgebra of $\mathcal{P}$. Hence $B$ is a zero algebra as stated.
    
    Now, we prove the second part. The argument in Lemma~\ref{cor33} still gives $x^2=0$ for all $x\in \mathcal{P}$. Suppose $\mathcal{P}_A^3 \neq 0$ and choose $a\in \mathcal{P}$ such that $P_a|_B\neq 0$. By the previous paragraph it follows $a\not\in B$.  Assume $Q_a$ acts invertibly on $B$. Then we have $P_a(B) = 0$. Indeed, we have 
    $$P_aQ_a(xy) = a[a, xy] = [a, axy] = [ax, ay] +[a^2 y, x] = [ax, ay] \in [B,B] = 0$$
    for $x, y \in \mathcal{P}$, so $P_a(B) = P_aQ_a(B) = 0$. 
    Hence $Q_a|_B$ is not invertible. 

    Since $P_a|_B\neq 0$, we can choose $b\in B$ such that $P_a(b)\neq 0$. Since we have $Q_a(B)\subseteq \textrm{ker}(P_a|_B)$, then $b\not\in Q_a(B)$, so $b$ is not in the $1$-space of the Fitting decomposition of $Q_a|_B$. Here we use the finite-dimensionality of $\mathcal{P}$. Take $b_0\neq 0$ the projection of $b$ on the nullspace of $Q_a|_B$. Then $Q_a^k(b_0) =0$ for some $k\in \mathbb{N}$ and $P_a(b_0)\neq 0$. 

    Finally, consider the Lie subalgebra $U$ generated by $a$ and $b_0$. Then $U$ is nilpotent, because $U_L^{k+1}\subseteq Q_a^{k}(U) = 0$, so $S_U$ is a Poisson nilpotent subalgebra of $\mathcal{P}$, by Theorem~\ref{nilcharacterization}. Since $\mathcal{P}$ is a Poisson $A$-algebra, $S_U$ must be a zero algebra, but $ab_0\neq 0$, which is a contradiction. Therefore, we conclude $\mathcal{P}_A^3=0$.
\end{proof}}

\begin{remark}
The question whether $\mathcal{P}_A^3=0$ holds without finite-dimensionality
is only open in characteristic $2$. In characteristic $p\neq 2$, one already has $\mathcal{P}_A^2=0$. The finite-dimensional hypothesis in the last proposition is used on the
Fitting decomposition of the linear map
$Q_a|_{\mathcal{P}_A^2}$. 
\end{remark}

\subsection{Structure of finite-dimensional Poisson $A$-algebras}
We now turn to the general finite-dimensional case, where idempotents and the associative radical play a central role in the structure.

We recall the Peirce decomposition for Poisson algebras given in \cite[Proposition 10]{GR08}, which will be used in our description of finite-dimensional Poisson $A$-algebras.

\begin{lemma}\label{idem} If $e$ is an idempotent of the associative algebra $\mathcal P_A$, then $[\mathcal P,e]=0$.
\end{lemma}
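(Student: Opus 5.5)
We want to show that $[x,e]=0$ for every $x\in\mathcal P$ whenever $e^2=e$. The tool is the Leibniz rule. Since $\mathcal P_A$ is commutative, it says that $Q_x$ (up to sign, $[\cdot,x]$) is a derivation of $\mathcal P_A$.

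\textbf{Step 1: $d=[e,x]$ is absorbed by $e$.} Fix $x\in\mathcal P$ and put $d=[e,x]$. Apply the Leibniz rule to $[e\cdot e,x]$:
$$d=[e,x]=[e^2,x]=[e,x]e+e[e,x]=2ed.$$
Multiplying both sides by $e$ and using $e^2=e$ gives $ed=2ed$. Hence $ed=0$, and then $d=2ed=0$.

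\textbf{Step 2: conclude.} Since $d=0$ for every $x$, we get $[\mathcal P,e]=0$ in every characteristic other than $2$.

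\textbf{Main obstacle: characteristic $2$.} Here $d=2ed$ collapses to $d=0$ only if we already know $2=0$ forces this, which it does not directly. The identity reads $d=2ed=0\cdot ed=0$, since $2=0$. So the same line still gives $d=[e^2,x]=2ed=0$.

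Thus the argument is uniform: in every characteristic, $[e,x]=2e[e,x]$. If $2=0$ this is already $0$. Otherwise, multiplying by $e$ gives $e[e,x]=0$, so $[e,x]=0$. I would write the proof as this single computation with the two-line case split. Nothing from the earlier results of the paper is needed; only the Leibniz rule and the commutativity of $\mathcal P_A$ are used.
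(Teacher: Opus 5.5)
Your proof is correct and is essentially the paper's argument: the paper writes $[x,e]=[x,e\cdot e]=2e[x,e]=4e[x,e]$, which is the same computation as multiplying $d=2ed$ by $e$, and concludes $[x,e]=0$. Your Step~1 already works in every characteristic: from $ed=2ed$ you get $ed=0$, hence $d=2ed=0$, also when $2=0$. So the restriction to characteristic $\neq 2$ in Step~2 and the separate discussion of characteristic $2$ are unnecessary.
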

\begin{proof} For all $x\in \mathcal{P}$, $[x,e]=[x,e\cdot e]=2e\cdot[x,e]=4e[x,e]$, so $[x,e]=0$ irrespective of the characteristic.
\end{proof}

\begin{lemma}\label{peirce} Assume $\mathcal{P}$ is finite-dimensional. If $\mathcal{P}_A$ is not nilpotent, then $\mathcal{P}_A=(1-e)\cdot\mathcal{P}\oplus e\cdot\mathcal{P}$ where $e$ is a principal idempotent and $(1-e)\cdot \mathcal{P}$ and $e\cdot \mathcal{P}$ are ideals of $\mathcal{P}$.
\end{lemma}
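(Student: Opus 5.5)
The plan is to combine the classical Fitting/Peirce theory of the finite-dimensional commutative associative algebra $\mathcal P_A$ with Lemma~\ref{idem}, which shows that idempotents are Lie-central. Throughout, $1-e$ is used formally, so that $(1-e)\cdot\mathcal P$ means $\{x-ex : x\in\mathcal P\}$; no identity element is assumed.

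First, since $\mathcal P_A$ is finite-dimensional and not nilpotent, it is not nil, because in finite dimension a nil commutative algebra is nilpotent. The standard associative theory then gives a principal idempotent $e\neq 0$. Concretely, one can take an element $x$ that is not nilpotent and apply the Fitting decomposition of $P_x$. A suitable polynomial in $x$ without constant term then yields an idempotent, and we choose $e$ to be one for which $(1-e)\cdot\mathcal P$ contains no nonzero idempotent (a principal idempotent). Since $\mathcal P_A$ is commutative, the Peirce decomposition reads
$$\mathcal P=(1-e)\cdot\mathcal P\oplus e\cdot\mathcal P.$$
Both summands are associative ideals, with $e\cdot\mathcal P\cdot(1-e)\cdot\mathcal P=0$, because $x\mapsto ex$ is an idempotent linear map commuting with every $P_y$. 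This part is purely associative and I would quote it rather than reprove it.

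The main step is to show that both summands are Lie ideals. By Lemma~\ref{idem}, $[\mathcal P,e]=0$, so $Q_e=0$. The Leibniz rule then gives
$$[ex,y]=[e,y]x+e[x,y]=e[x,y]\in e\cdot\mathcal P,$$
and similarly
$$[x-ex,y]=[x,y]-e[x,y]\in(1-e)\cdot\mathcal P.$$
Hence $e\cdot\mathcal P$ and $(1-e)\cdot\mathcal P$ are closed under bracketing with all of $\mathcal P$, so both are Poisson ideals.

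I expect the main obstacle to be only the bookkeeping around the non-unital setting: making sense of $1-e$, checking that the sum is direct, and checking that the existence of a principal idempotent is justified without an identity element. For this I would pass to the unitization $\mathcal P+\mathbb F1$, where all of these statements are standard, and then observe that $(1-e)\cdot\mathcal P\subseteq\mathcal P$.
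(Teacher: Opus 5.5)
Your proposal is correct and follows essentially the same route as the paper. Both quote the associative existence of a principal idempotent and the Peirce decomposition, then use Lemma~\ref{idem} together with the Leibniz rule to get $[x-ex,y]=(1-e)[x,y]$ and $[ex,y]=e[x,y]$. The only cosmetic difference is that the paper checks the associative ideal property and the directness of the sum by hand (for $u\in e\cdot\mathcal P\cap(1-e)\cdot\mathcal P$ one has $u=eu=0$), whereas you quote these from the associative theory.
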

\begin{proof} Suppose that $\mathcal{P}_A$ is not nilpotent. Then it is well known that there is a principal idempotent and the Peirce decomposition is $\mathcal{P}_A=(1-e)\cdot\mathcal{P}+e\cdot\mathcal{P}$. Now, for all $x,y\in \mathcal{P}$, $x\cdot (y-e\cdot y)=(1-e)\cdot x\cdot y\in (1-e)\cdot \mathcal{P}$ and $[x,y-e\cdot y]=[x,y]-[x,e\cdot y]=[x,y]-e\cdot[x,y]-y\cdot[x,e]=(1-e)\cdot[x,y]\in (1-e)\cdot \mathcal{P}$, by Lemma~\ref{idem}. Also,
$x\cdot e\cdot y=e \cdot x\cdot y\in e\cdot \mathcal{P}$,  and $[x,e\cdot y]=e\cdot [x,y]\in e\cdot \mathcal{P}$, using Lemma~\ref{idem} again. {Finally, if $u=e\cdot x=y-e\cdot y\in e\cdot \mathcal{P}\cap (1-e)\mathcal{P}$, then $e\cdot u=e\cdot x=0$ so $u=0$.}
\end{proof}

\begin{propo} Let $\mathcal P$ be a finite-dimensional Poisson $A$-algebra over a field $\mathbb F$ of characteristic different from $2$ and $3$ and of cohomological dimension at most $1$. Then
$$
   Z(\mathcal P)\cap\mathcal P^{(1)}=0.
$$
\end{propo}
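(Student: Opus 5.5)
The plan is to reduce to the purely Lie case via the Peirce decomposition of Lemma~\ref{peirce}, and then quote the corresponding result for Lie $A$-algebras.

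\emph{Case 1: $\mathcal P_A$ nilpotent.} Since the characteristic is not $2$ and $\mathcal P_A$ is nil, Lemma~\ref{cor33} gives $\mathcal P_A^2=0$, and $\mathcal P_L$ is a Lie $A$-algebra. Then $\mathcal P^{(1)}=\mathcal P_A^2+[\mathcal P,\mathcal P]=[\mathcal P,\mathcal P]$. Also $Z(\mathcal P)=Z(\mathcal P_L)$, because the associative product is zero. So the claim becomes $Z(\mathcal P_L)\cap\mathcal P_L^2=0$ for a finite-dimensional Lie $A$-algebra over a field of characteristic $\neq 2,3$ and cohomological dimension at most $1$. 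This is the known structural result for Lie $A$-algebras in \cite{Aalg}, and I would invoke it directly. Checking that the field hypotheses match exactly is the step I expect to be the main obstacle. They enter only there: that result comes from the splitting of the nilradical and the decomposition of Lie $A$-algebras, which needs those assumptions on $\mathbb F$.

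\emph{Case 2: $\mathcal P_A$ not nilpotent.} By Lemma~\ref{peirce}, $\mathcal P=B\oplus C$ with $B=(1-e)\cdot\mathcal P$ and $C=e\cdot\mathcal P$ Poisson ideals, where $e$ is a principal idempotent. Since $B$ and $C$ are ideals with $B\cap C=0$, we get $B\cdot C=0=[B,C]$. Hence
\[
Z(\mathcal P)=Z(B)\oplus Z(C) \quad\text{and}\quad \mathcal P^{(1)}=B^{(1)}\oplus C^{(1)},
\]
and the intersection splits componentwise. Both $B$ and $C$ are subalgebras, so every nilpotent subalgebra of them is a nilpotent subalgebra of $\mathcal P$, hence a zero algebra. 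Thus $B$ and $C$ are Poisson $A$-algebras.

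For $C$: it is unital with identity $e$, since $e\cdot(e\cdot x)=e\cdot x$. If $z\in Z(C)$, then $z=e\cdot z=0$, so $Z(C)=0$.

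For $B$: because $e$ is principal, $(1-e)\cdot\mathcal P$ contains no nonzero idempotent. A finite-dimensional commutative associative algebra without nonzero idempotents is nil, hence nilpotent, so $B_A$ is nilpotent. Case 1 then gives $Z(B)\cap B^{(1)}=0$.

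Combining the two components yields $Z(\mathcal P)\cap\mathcal P^{(1)}=0$.
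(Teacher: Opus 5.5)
Your proof is correct and is essentially the paper's argument. The nilpotent case is reduced via Lemma~\ref{cor33} to the Lie statement $Z(\mathcal P_L)\cap\mathcal P_L^2=0$; otherwise the Peirce decomposition of Lemma~\ref{peirce} removes the unital ideal $e\cdot\mathcal P$ (the paper phrases this as $ex=0$ for central $x$, forcing $x\in((1-e)\cdot\mathcal P)_L^2$) and leaves the nilpotent component to the Lie result. The one thing to fix is the reference: the Lie theorem with exactly these field hypotheses, and no solvability assumption, is \cite[Proposition~2]{PS90}, which is what the paper invokes, whereas the results of \cite{Aalg} that the paper uses elsewhere concern the solvable case.
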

\begin{proof} 
If $\mathcal{P}_A$ is nilpotent, then Lemma~\ref{cor33} gives
$\mathcal{P}_A^2=0$ and $\mathcal{P}_L$ is a Lie $A$-algebra. Hence
$Z(\mathcal P)=Z(\mathcal P_L)$ and
$\mathcal P^{(1)}=\mathcal P_L^2$, so the result follows from
\cite[Proposition~2]{PS90}. We may therefore assume that
$\mathcal{P}_A$ is not nilpotent. Let $e\in \mathcal{P}_A$ be a principal idempotent. The associative algebra $(1-e)\mathcal{P}$ is nilpotent. Hence, $((1-e)\mathcal{P})_A^2 = 0$ and $(1-e)\mathcal{P}$ is a Lie $A$-algebra, by Lemma~\ref{cor33}. Moreover, we have $(e\mathcal{P})_A^2 = e\mathcal{P}$, so $\mathcal{P}^{(1)} = ((1-e)\mathcal{P})_L^2 \oplus e\mathcal{P}$. Write $I = (1-e)\mathcal{P}$.

Now, let $x\in Z(\mathcal P)\cap\mathcal P^{(1)}$. Since $x\mathcal{P} = 0$, we have $ex=0$ and hence $x\in I_L^2$. Therefore, we have $x\in Z(I_L)\cap I_L^2$. Since $I_L$ is a finite-dimensional Lie $A$-algebra, we conclude $x=0$, by \cite[Proposition~2]{PS90}.
\end{proof}

\begin{propo}\label{jacobideal}
    Let $\mathcal{P}$ be a finite-dimensional Poisson $A$-algebra over a field of characteristic different from $2$. Then $J(\mathcal{P}_A)$, the radical of $\mathcal{P}_A$, is an ideal of $\mathcal{P}$.
\end{propo}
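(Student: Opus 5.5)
We must show that $J=J(\mathcal P_A)$ satisfies $[\mathcal P,J]\subseteq J$; it is already an associative ideal. The plan is to use the Peirce decomposition of Lemma~\ref{peirce} together with Lemma~\ref{cor33}.

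If $\mathcal P_A$ is nilpotent, then $J=\mathcal P$ and there is nothing to prove. So assume $\mathcal P_A$ is not nilpotent, let $e$ be a principal idempotent, and write $\mathcal P=I\oplus e\mathcal P$ with $I=(1-e)\mathcal P$. By Lemma~\ref{peirce} both summands are Poisson ideals. Since $e$ is principal, $I_A$ is nilpotent, so $I\subseteq J$. Moreover, $J=I\oplus J(e\mathcal P)$, where $J(e\mathcal P)=eJ$ is the radical of the unital algebra $e\mathcal P$.

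It then suffices to show that $eJ$ is a Lie ideal. Because $e\mathcal P$ is a Poisson ideal with $[I,e\mathcal P]\subseteq I\cap e\mathcal P=0$, this reduces to showing that $[e\mathcal P,eJ]\subseteq eJ$. In other words, we may assume $\mathcal P$ is unital with identity $e$. Note that $e\mathcal P$ is again an $A$-algebra, since its subalgebras are subalgebras of $\mathcal P$.

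In the unital case, first note that $N:=J$ is an associative nilpotent ideal. Next, $N$ is a Lie subalgebra: by the Leibniz rule, $[N^i,N^j]\subseteq N^{i+j-1}$, and $[N,N]\subseteq N$ follows from the following step.

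\emph{Key step.} We show that $[\mathcal P,J]\subseteq J$ directly. The first tool is the characterisation of $J$ in a finite-dimensional commutative algebra as the set of nilpotent elements. So it suffices to show that $[x,n]$ is nilpotent for $x\in\mathcal P$ and $n\in J$. Here we use the $A$-property. Consider $B=J$ with the induced Poisson structure. Since $J_A$ is nilpotent, $J_A^2$ is a nilpotent Poisson ideal of $\mathcal P$ by Lemma~\ref{square}, and in particular a nilpotent subalgebra. Hence it is a zero algebra. Applying Lemma~\ref{cor33}-type reasoning (characteristic $\neq2$, using $x^2=0$ for $x$ in a nilpotent closure $S_{\mathbb Fx}$, $x\in J$; this closure is associatively nilpotent and abelian, hence nilpotent by Proposition~\ref{propnil}) gives $x^2=0$ for all $x\in J$, and therefore $J_A^2=0$.

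Now fix $n\in J$ and $x\in\mathcal P$. From $n^2=0$ and the Leibniz rule we get $2n[x,n]=[x,n^2]=0$, so $n[x,n]=0$. The aim is to show that $[x,n]^2=0$ or at least that $[x,n]$ is nilpotent.

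For this, decompose $x$ using a Wedderburn-type splitting. Over a general field this is not available, so instead write $[x,n]=y$ and compute
\[
y^2=[x,n]y=[x,ny]-n[x,y]=-n[x,y].
\]
The element $n[x,y]$ lies in $J$, so $y^2\in J$, and hence $y^2$ is nilpotent. Therefore $y$ is nilpotent, and so $y\in J$.

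I expect the main obstacle to be justifying the $A$-property step that gives $J_A^2=0$. This requires checking that $S_{\mathbb Fx}$ for $x\in J$ is a nilpotent subalgebra, which holds by Proposition~\ref{propnil} applied inside the nilpotent algebra $S_{\mathbb Fx}$. In fact, the last computation only needs $n[x,n]=0$, which requires $n^2=0$, so this step is essential.
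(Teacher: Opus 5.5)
Your key step is correct and is essentially the paper's argument: the $A$-property applied to the nilpotent closure $S_{\mathbb F n}$ gives $n^2=0$, hence $n[x,n]=0$, and a Leibniz computation then shows that $[x,n]$ is nilpotent. You get $[x,n]^2=-n[x,[x,n]]\in J$, whereas the paper gets $[n,x]^3=0$ directly; the Peirce reduction to the unital case and the appeal to Lemma~\ref{square} are unnecessary, and the latter is circular since it presupposes that $J$ is closed under the bracket, but the key step does not depend on either.
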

\begin{proof}
    Let $x \in J(\mathcal{P}_A)$ and $y\in \mathcal{P}$. Since $\mathcal{P}_A$ is finite-dimensional, $x$ is associatively nilpotent. Hence the Poisson subalgebra $S_{\mathbb{F}x}$ is nilpotent and therefore, since $\mathcal{P}$ is an $A$-algebra, it is a zero algebra. In particular, since $x^2=0$, we have $2x[x,y]=0$. Then
    $$[x,y]^3=[x[x,y]^2,y]-x[[x,y]^2,y]=-2x[x,y][[x,y],y]=0.$$ 
    Therefore, $[x,y]$ is associatively nilpotent and hence belongs to $J(\mathcal{P}_A)$.
\end{proof}

\begin{theor}\label{quotient}
    Let $\mathcal{P}$ be a finite-dimensional Poisson $A$-algebra over a field $\mathbb{F}$  of characteristic different from $2$. Then as Poisson algebras
    $$\mathcal{P} / J(\mathcal{P}_A) \cong \mathbb{K}_1 \oplus \ldots \oplus \mathbb{K}_m,$$ 
    where $\mathbb{K}_1, \ldots, \mathbb{K}_m$ are finite field extensions of $\mathbb{F}$. 
\end{theor}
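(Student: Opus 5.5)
By Proposition~\ref{jacobideal}, $J=J(\mathcal P_A)$ is a Poisson ideal of $\mathcal P$. So $\overline{\mathcal P}=\mathcal P/J$ is a Poisson algebra whose associative part $\overline{\mathcal P}_A=\mathcal P_A/J(\mathcal P_A)$ is a finite-dimensional commutative semisimple associative algebra. I do not intend to use that $\overline{\mathcal P}$ is again an $A$-algebra, although this would follow from Lemma~\ref{l:fac}. The only property I need is that the Lie bracket on $\overline{\mathcal P}$ vanishes.

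First I describe $\overline{\mathcal P}_A$. If $J=\mathcal P$, then $\mathcal P_A$ is nilpotent and the quotient is $0$, which corresponds to $m=0$. Otherwise, by Wedderburn--Artin for commutative algebras, $\overline{\mathcal P}_A\cong \mathbb K_1\oplus\cdots\oplus\mathbb K_m$ with each $\mathbb K_i$ a finite field extension of $\mathbb F$. In particular $\overline{\mathcal P}_A$ is unital, and the identity decomposes as $1=e_1+\cdots+e_m$ with the $e_i$ orthogonal primitive idempotents and $e_i\overline{\mathcal P}=\mathbb K_i$.

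The key step is to show that the bracket on $\overline{\mathcal P}$ is zero. By Lemma~\ref{idem}, every idempotent is Lie central, so $[\overline{\mathcal P},e_i]=0$. The Leibniz rule then gives $[e_ix,y]=e_i[x,y]$, so each $\mathbb K_i=e_i\overline{\mathcal P}$ is a Poisson ideal and $[\mathbb K_i,\mathbb K_j]=[e_ie_ix,e_jy]=e_ie_j[\cdot,\cdot]=0$ for $i\neq j$. Hence $\overline{\mathcal P}=\bigoplus \mathbb K_i$ as Poisson algebras, and it remains to show that the bracket on each $\mathbb K_i$ is zero. A Poisson bracket on a field $\mathbb K$ is, for each fixed $a$, a map $[a,-]$ that is an $\mathbb F$-linear derivation of $\mathbb K$. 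For a finite separable extension every such derivation vanishes: if $\alpha$ has minimal polynomial $f$, then $0=D(f(\alpha))=f'(\alpha)D(\alpha)$ with $f'(\alpha)\neq 0$, so $D(\alpha)=0$, and $\alpha$ can be taken to generate $\mathbb K$ as a primitive element.

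The main obstacle is inseparable extensions in characteristic $p>2$. There $\mathbb K$ may admit nonzero derivations, for example $\mathbb F(t^{1/p},s^{1/p})$ over $\mathbb F=\mathbb F_p(t,s)$ carries a nonzero Poisson bracket. To exclude this, I will use the $A$-algebra hypothesis together with Lemma~\ref{l:fac}: $\mathbb K_i$ is a quotient of $\mathcal P$, hence a Poisson $A$-algebra. Suppose $[a,b]\neq0$ in $\mathbb K_i$. I then look for a nilpotent Poisson subalgebra that is not a zero algebra. The natural candidate is $S_{\mathbb F c}$ for a suitable $c$, but inside a field no nonzero element is nil, so nilpotent subalgebras of $\mathbb K_i$ are trivial and this route gives nothing. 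I would therefore instead lift to $\mathcal P$ and work in $\mathcal P$ itself. Choose $a\in\mathcal P$ whose image is purely inseparable of degree $p$ over a subfield, and consider $x=a^p-\lambda$-type elements lying in $J$. By the proof of Proposition~\ref{jacobideal}, such elements satisfy $x^2=0$ and $x[x,y]=0$. I would try to use these relations to force $[a,y]\in J$ for all $y$. I expect this step to be the hardest; if it fails, one would need an additional hypothesis such as $\mathbb F$ perfect. With the bracket shown to be zero, the decomposition above is the required isomorphism of Poisson algebras.
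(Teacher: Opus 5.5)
Your argument up to the sentence ``Hence $\overline{\mathcal P}=\bigoplus\mathbb K_i$ as Poisson algebras'' is essentially the paper's proof. $J(\mathcal P_A)$ is a Poisson ideal by Proposition~\ref{jacobideal}, and Wedderburn gives the fields $\mathbb K_i$. Lemma~\ref{idem} then yields $[e_ix,y]=e_i[x,y]$, so each $\mathbb K_i=e_i\overline{\mathcal P}$ is a Poisson ideal and $[\mathbb K_i,\mathbb K_j]=0$ for $i\neq j$. That already proves the theorem. The phrase ``as Poisson algebras'' means that each summand $\mathbb K_i$ is a field carrying whatever bracket it inherits from $\overline{\mathcal P}$. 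The theorem does not assert that this bracket is zero, so you have misidentified the remaining task.

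The step you call key is in fact false, so your lifting strategy cannot succeed, and your own example refutes it. Take $\mathbb K=\mathbb F_p(t^{1/p},s^{1/p})$ over $\mathbb F=\mathbb F_p(t,s)$ with the symplectic bracket in the generators $t^{1/p},s^{1/p}$. It is a finite-dimensional Poisson algebra in which, as you observe, every nilpotent subalgebra is $0$. Hence it is a Poisson $A$-algebra with $J(\mathcal P_A)=0$ and $[t^{1/p},s^{1/p}]=1\neq 0$. This is exactly the Remark the paper gives shortly after the theorem, with $p=3$. Since $J=0$ there, nothing can be lifted, and no use of the relations $x^2=0$, $x[x,y]=0$ for $x\in J$ can force $[a,y]\in J$.

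Your primitive-element argument does show the bracket vanishes when every $\mathbb K_i$ is separable over $\mathbb F$, in particular over a perfect field. That is the input the paper uses for Corollary~\ref{corosep}, but it is a separate, stronger statement. Delete your final paragraph and the proof is complete.
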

\begin{proof}
    Let $\mathcal{P}$ be a finite-dimensional Poisson $A$-algebra over a field $\mathbb{F}$. By the Wedderburn theorem, we obtain $\mathcal{P}_A/ J(\mathcal{P}_A) \cong  \mathbb{K}_1 \oplus \ldots \oplus \mathbb{K}_m$, where $\mathbb{K}_i$ are finite field extensions of $\mathbb{F}$. Since $J(\mathcal{P}_A)$  is an ideal of $\mathcal{P}$, the quotient $\mathcal{P}/ J(\mathcal{P}_A)$ is well-defined.
    
    It remains to prove that $[\mathbb{K}_i, \mathbb{K}_j] = 0$ when $i\neq j$ and $[\mathbb{K}_i, \mathbb{K}_i] \subseteq \mathbb{K}_i$. Assume $e_i$ is the unit of $\mathbb{K}_i$. Since for $x\in \mathbb{K}_i$ and $y\in \mathbb{K}_j$, we have
    $$[x, y] = [e_i x, y] = e_i [x, y] + x[e_i, y] = e_i [x, y],$$
    and $e_i\mathbb{K}_j = 0$ for $i\neq j$, it follows that $[x, y]\in \mathbb{K}_i$. Similarly, we obtain $[x, y]\in \mathbb{K}_j$. Hence, we deduce $[x,y] = 0$ when $i\neq j$. An analogous argument gives $[\mathbb{K}_i, \mathbb{K}_i] \subseteq \mathbb{K}_i$. 
\end{proof}

\begin{coro}
    If $\mathcal{P}$ is a finite-dimensional Lie solvable Poisson $A$-algebra over a field of characteristic different from $2$, then $\mathcal{P}_L^2\subseteq J(\mathcal{P}_A)$.
\end{coro}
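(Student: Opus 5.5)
We combine Theorem~\ref{quotient} with the Lie solvability hypothesis. By Proposition~\ref{jacobideal}, $J:=J(\mathcal P_A)$ is a Poisson ideal of $\mathcal P$. So the quotient map $\pi:\mathcal P\to\mathcal P/J$ is a homomorphism of Poisson algebras, and in particular of the underlying Lie algebras. By Theorem~\ref{quotient} we have a Poisson isomorphism
\[
\mathcal P/J\cong \mathbb K_1\oplus\cdots\oplus\mathbb K_m ,
\]
with each $\mathbb K_i$ a finite field extension of $\mathbb F$, and the Lie bracket respecting this decomposition. It therefore suffices to show that $(\mathcal P/J)_L$ is abelian. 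Then $\pi([\mathcal P,\mathcal P])=[\pi(\mathcal P),\pi(\mathcal P)]=0$, that is, $\mathcal P_L^2\subseteq J$.

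Since $\mathcal P_L$ is solvable, its homomorphic image $(\mathcal P/J)_L$ is solvable. Each $\mathbb K_i$ is a Lie ideal, being the image of a projection that is a Poisson homomorphism, so each $(\mathbb K_i)_L$ is solvable. The problem thus reduces to the following claim: a Poisson algebra structure on a field $\mathbb K$ (finite over $\mathbb F$, with $\operatorname{char}\mathbb F\neq2$) whose Lie algebra is solvable has zero bracket.

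To prove the claim, note that each $Q_x$ is a derivation of the field $\mathbb K$ by the Leibniz rule, and $[\mathbb K,\mathbb K]$ is a $\mathbb K$-subspace: $a[x,y]=[ax,y]-x[a,y]$ shows $\mathbb K[\mathbb K,\mathbb K]\subseteq[\mathbb K,\mathbb K]$. Hence the derived series of $\mathbb K_L$ consists of $\mathbb K$-subspaces of the one-dimensional $\mathbb K$-space $\mathbb K$, so each term is either $0$ or $\mathbb K$. Since the series terminates in $0$, the first step already satisfies $[\mathbb K,\mathbb K]\ne\mathbb K$, and therefore $[\mathbb K,\mathbb K]=0$. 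Here I will verify that every term of the derived series is a $\mathbb K$-subspace. By induction, suppose $W=\mathbb K_L^{(n)}=\mathbb K$ or $0$; then $\mathbb K_L^{(n+1)}=[W,W]$ equals $[\mathbb K,\mathbb K]$ or $0$. So in fact only the first step matters, and if $[\mathbb K,\mathbb K]=\mathbb K$ then the series is constantly $\mathbb K$, contradicting solvability.

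The main obstacle is the bookkeeping that the bracket on $\mathcal P/J$ truly splits with each summand a Poisson field. This is delivered by Theorem~\ref{quotient}: $[\mathbb K_i,\mathbb K_j]=0$ for $i\neq j$. The $\mathbb K$-linearity observation is the key trick. If needed, I would alternatively argue directly with the identity $[\mathbb K,\mathbb K]=\mathbb K[\mathbb K,\mathbb K]$, since $\mathbb K$ is unital.
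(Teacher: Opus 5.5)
Your reduction is sound: $J:=J(\mathcal P_A)$ is a Poisson ideal by Proposition~\ref{jacobideal}, $\mathcal P/J$ is a direct sum of Poisson fields $\mathbb K_i$ by Theorem~\ref{quotient}, and each $(\mathbb K_i)_L$ is solvable. The gap is in the field step. The identity $a[x,y]=[ax,y]-x[a,y]$ only gives $a[x,y]\in[\mathbb K,\mathbb K]+\mathbb K[\mathbb K,\mathbb K]$. The term $x[a,y]$ is exactly the kind of product you are trying to control, so the inclusion $\mathbb K[\mathbb K,\mathbb K]\subseteq[\mathbb K,\mathbb K]$ is assumed rather than proved.

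That inclusion is also false in general. Take the paper's example $\mathbb K=\mathbb F_3(r,s)$ over $\mathbb F_3(r^3,s^3)$ with the symplectic bracket. There $1=[r,s]\in[\mathbb K,\mathbb K]$, but $r^2s^2\notin[\mathbb K,\mathbb K]$. Indeed, $[r^as^b,r^cs^d]=(ad-bc)r^{a+c-1}s^{b+d-1}$ with $0\le a,b,c,d\le 2$. In the $\mathbb F$-basis $r^is^j$, the $r^2s^2$-coordinate only receives contributions with $a+c=b+d=3$, and then $ad-bc=3(a-b)=0$. So the derived series of a Poisson field need not consist of $\mathbb K$-subspaces. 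The unit only gives the reverse inclusion $[\mathbb K,\mathbb K]\subseteq\mathbb K[\mathbb K,\mathbb K]$.

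A telling symptom is that your field argument never uses $\operatorname{char}\mathbb F\neq 2$, yet both your claim and the corollary fail in characteristic $2$. Take $\mathbb K=\mathbb F_2(r,s)$ over $\mathbb F=\mathbb F_2(r^2,s^2)$ with the same bracket. On the basis $1,r,s,rs$ one has $[r,s]=1$, $[r,rs]=r$ and $[s,rs]=s$. Hence $\mathbb K_L^{(1)}=\operatorname{span}\{1,r,s\}$, which is not a $\mathbb K$-subspace, $\mathbb K_L^{(2)}=\mathbb F1$ and $\mathbb K_L^{(3)}=0$. This is a finite-dimensional, Lie solvable Poisson $A$-algebra (it has no nonzero nilpotent elements) with $J(\mathcal P_A)=0$ and nonzero bracket.

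So Lie solvability plus being a field is not enough; the characteristic has to enter through a genuinely nontrivial input. The paper supplies it via Proposition~\ref{wsu}(i), that is, the Siciliano--Usefi theorem that $\mathcal P[\mathcal P,\mathcal P]$ is associatively nil whenever $\mathcal P_L$ is solvable and $p\neq2$. Applied to $C=\mathcal P/J$, it makes $(S_{C_L^2})_A$ nil, and since $C_A$ is a direct sum of fields this forces $C_L^2=0$. You can repair your proof by replacing the $\mathbb K$-linearity trick with this step on each $\mathbb K_i$: every element of $[\mathbb K_i,\mathbb K_i]$ is then nilpotent in a field, hence zero.
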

\begin{proof}
    Write $C = \mathcal{P} / J(\mathcal{P}_A)$. By Theorem~\ref{quotient}, we have $C \cong \mathbb{K}_1 \oplus \ldots \oplus \mathbb{K}_m,$ for certain finite field extensions of $\mathbb{F}$. Since $C_L$ is solvable, Proposition~\ref{wsu} shows that $(S_{C_L^2})_A$ is nil. Since $C$ is finite-dimensional, $(S_{C_L^2})_A$ is therefore nilpotent. But $C_A$ is a direct sum of fields, so it has no nilpotent subalgebras.  Hence, $S_{C_L^2} = 0$ and $C_{L}^2 =0$. It follows  that $\mathcal{P}_L^2 \subseteq J(\mathcal{P}_A)$.
\end{proof}

\begin{remark}
    Consider the field $\mathbb{F} = \mathbb{F}_3(r^3, s^3)$ and its extension $\mathbb{K} = \mathbb{F}_3(r, s)$, for algebraically independent indeterminates $r,s$. Then $\mathbb{K}$ is a $9$-dimensional algebra over $\mathbb{F}$ with a basis $$\left\{1, r, r^2, s, s^2, rs, r^2s, rs^2, r^2s^2\right\}.$$ Consider the Poisson algebra $\mathcal{P}$ obtained by endowing $\mathbb{K}$ with the usual symplectic bracket
     \begin{equation}\label{eqbr2}
        [f, g] = \dfrac{\partial f}{\partial r}\dfrac{\partial g}{\partial s} - \dfrac{\partial g}{\partial r}\dfrac{\partial f}{\partial s}.
    \end{equation} 
    Notice that $ \frac{\partial }{\partial r}$ and $\frac{\partial }{\partial s}$ are commuting derivations of $\mathbb{K}$.  
    Then $\mathbb{K}$ does not have nilpotent elements, so $\mathcal{P}$ is an $A$-algebra. Also, the Lie bracket is not abelian.
\end{remark}

Recall that if $\mathbb{K}$ is a finite separable extension of $\mathbb F$, then every $\mathbb F$-derivation from $\mathbb K$ into a $\mathbb K$-module is zero. In fact,  in the previous example $\mathbb{K}$ is not separable as the minimal polynomial of $r$ is $x^3-r^3 = (x-r)^3$, which allows a non abelian Lie bracket. 

In the separable case, the situation is different. 
Let $\mathcal{A}$ and $\mathcal{B}$ be two Poisson algebras. We write $\mathcal{P} = \mathcal{A} \ltimes \mathcal{B}$ if $\mathcal{P}$ is a Poisson algebra such that $\mathcal{A}$ is a subalgebra of $\mathcal{P}$, $\mathcal{B}$ is an ideal of $\mathcal{P}$ and, as vector spaces, $\mathcal{P} = \mathcal{A} \oplus \mathcal{B}$.

\begin{coro}\label{corosep}
    Let $\mathcal{P}$ be a finite-dimensional Poisson $A$-algebra over a field $\mathbb{F}$ of characteristic $p\neq 2$. If $\mathcal{P} / J(\mathcal{P}_A)$ is a direct sum of separable field extensions, then as Poisson algebras
    $$\mathcal{P} = (\mathbb{K}_1 \oplus \ldots \oplus \mathbb{K}_m) \ltimes J(\mathcal{P}_A),$$ 
    where $\mathbb{K}_1, \ldots, \mathbb{K}_m$ are finite separable field extensions of $\mathbb{F}$ and 
    \begin{enumerate} 
        \item 
    $J(\mathcal{P}_A)_A^2 = 0$ and $J(\mathcal{P}_A)_L$ is a Lie $A$-algebra; 
        \item $[\mathcal{P}, \mathbb{K}_1 \oplus \ldots \oplus \mathbb{K}_m]=0$.
    \end{enumerate}
\end{coro}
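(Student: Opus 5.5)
We first split off a separable complement of the radical, then read off properties (i) and (ii) from results already proved.

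Put $J=J(\mathcal P_A)$. By Proposition~\ref{jacobideal}, $J$ is an ideal of $\mathcal P$, and by Theorem~\ref{quotient} we have $\mathcal P/J\cong\mathbb K_1\oplus\cdots\oplus\mathbb K_m$ as Poisson algebras. By hypothesis each $\mathbb K_i$ is separable over $\mathbb F$, so $(\mathcal P/J)_A$ is a separable commutative algebra.

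The Wedderburn--Malcev principal theorem then gives an associative subalgebra $K\subseteq\mathcal P_A$ with $\mathcal P_A=K\oplus J$ and $K\cong\mathbb K_1\oplus\cdots\oplus\mathbb K_m$. In the commutative case this complement is even unique. Concretely, the idempotents $\bar e_i$ of $\mathcal P/J$ lift to orthogonal idempotents $e_i\in\mathcal P$. Each $\mathbb K_i=\mathbb F(\bar\alpha_i)$ by the primitive element theorem. The minimal polynomial of $\bar\alpha_i$ is separable, so Hensel/Newton lifting modulo the nilpotent ideal $J$ gives a root $\alpha_i\in e_i\mathcal P$. The subalgebra $K$ is spanned by the resulting copies of the $\mathbb K_i$.

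The main obstacle is showing that $K$ is a Poisson subalgebra, and in fact that $[\mathcal P,K]=0$, which is (ii). We use the remark preceding the corollary. Fix $x\in\mathcal P$ and consider $D=Q_x|_{K}\colon K\to\mathcal P$. By the Leibniz rule, $D$ is an $\mathbb F$-derivation from $K$ into the $K$-module $\mathcal P$. Since $K$ is a finite direct sum of finite separable field extensions, every such derivation vanishes.

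In more elementary terms, the idempotents satisfy $[x,e_i]=0$ by Lemma~\ref{idem}. For a separable generator $\alpha\in K$ with minimal polynomial $f$, we have $0=[x,f(\alpha)]=f'(\alpha)[x,\alpha]$. Here $f'(\alpha)$ is invertible in the corresponding field component, and $[x,\alpha]=e_i[x,\alpha]$ lies in the $e_i$-component. Hence $[x,\alpha]=0$.

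Therefore $[\mathcal P,K]=0$. In particular $K$ is closed under the bracket, so it is a Poisson subalgebra with zero bracket. The isomorphism $K\cong\mathcal P/J$ is then a Poisson isomorphism, and $\mathcal P=K\ltimes J$.

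For (i), note that $J$ is a Poisson subalgebra of $\mathcal P$. Since $\mathcal P$ is finite-dimensional, $J$ is associatively nilpotent, and every subalgebra of $J$ is a subalgebra of $\mathcal P$, so $J$ is itself a Poisson $A$-algebra. Lemma~\ref{cor33} (characteristic $\neq 2$, $J_A$ nil) then gives $J_A^2=0$ and shows that $J_L$ is a Lie $A$-algebra.
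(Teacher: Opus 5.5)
Your proposal is correct and follows essentially the same route as the paper: a Wedderburn--Malcev complement $K\cong\mathbb K_1\oplus\cdots\oplus\mathbb K_m$, then the vanishing of the derivations $Q_x|_K$ from a separable algebra into $\mathcal P$ to get $[\mathcal P,K]=0$, and Lemma~\ref{cor33} applied to the Poisson $A$-algebra $J(\mathcal P_A)$ for (i). Your explicit lift via primitive elements and Newton's method, and your computation $0=[x,f(\alpha)]=f'(\alpha)[x,\alpha]$ (using $[x,e_i]=0$ and $[x,\alpha]=e_i[x,\alpha]$), just fill in details the paper leaves implicit, including the fact that $\mathcal P$ need not be a unital $K$-module.
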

\begin{proof}
    The result follows by a similar argument to that of Theorem~\ref{quotient}, using the Malcev lift of Wedderburn theorem and the fact that separable field extensions do not admit non trivial derivations into any $\mathbb{K}_i$-module, in particular into $\mathcal{P}$.    
\end{proof}

{The preceding decomposition can be refined further. We first introduce
the building blocks that will appear in this description. Let $\mathcal L$ be a Lie algebra over a field $\mathbb K$. We denote by
$\widehat{\mathcal L}^{\,\mathbb K}$
the \emph{unitization} of $\mathcal L$, which consists of the vector space $\mathbb K\oplus\mathcal L$ endowed with the multiplications
$$
(\lambda,x)(\mu,y)
=
(\lambda\mu,\lambda y+\mu x),
\qquad
[(\lambda,x),(\mu,y)]
=
(0,[x,y]),
$$
for $\lambda,\mu\in\mathbb K$ and $x,y\in\mathcal L$. Thus
$\widehat{\mathcal L}^{\,\mathbb K}$ is a Poisson algebra over
$\mathbb K$ in which $\mathcal L$ is an ideal with zero associative
multiplication. If $\mathbb K/\mathbb F$ is a field extension, we
will regard $\widehat{\mathcal L}^{\,\mathbb K}$ as a Poisson
algebra over $\mathbb F$ by restriction of scalars.

\begin{propo}
If $\mathcal L$ is a Lie $A$-algebra over $\mathbb K$, then
$\widehat{\mathcal L}^{\,\mathbb K}$ is a Poisson $A$-algebra.
\end{propo}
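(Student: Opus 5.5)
The plan is to take an arbitrary nilpotent Poisson subalgebra $U$ of $\mathcal P=\widehat{\mathcal L}^{\,\mathbb K}$, regarded as an $\mathbb F$-algebra, and show directly that $U_A^2+[U,U]=0$. The argument has two steps. First, I show that $U$ lies inside the ideal $\mathcal L$, so its associative product vanishes. Second, I reduce the bracket condition to the Lie $A$-property of $\mathcal L$ over $\mathbb K$.

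\emph{Step 1: $U\subseteq\mathcal L$.} Since $U$ is nilpotent as a Poisson algebra, $U_A$ is associative nilpotent, and in particular every element of $U$ is associatively nilpotent. Take $u=(\lambda,x)\in U$. An induction using the product formula gives
$$u^n=(\lambda^n,\,n\lambda^{n-1}x).$$
If $\lambda\neq0$, the first component $\lambda^n$ is never zero, so $u$ cannot be nilpotent. Hence $\lambda=0$, i.e. $U\subseteq 0\oplus\mathcal L$. Since $\mathcal L$ has zero associative multiplication in $\mathcal P$, we get $U_A^2=0$.

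\emph{Step 2: $[U,U]=0$.} Now $U$ is an $\mathbb F$-Lie subalgebra of $\mathcal L$, and it is Lie nilpotent, say $U_L^n=0$. The hypothesis on $\mathcal L$ concerns $\mathbb K$-subalgebras, so I pass to the $\mathbb K$-span $\mathbb K U$. Because the bracket on $\mathcal L$ is $\mathbb K$-bilinear,
$$\Big[\sum\lambda_iu_i,\sum\mu_jv_j\Big]=\sum\lambda_i\mu_j[u_i,v_j]\in\mathbb K U.$$
So $\mathbb K U$ is a $\mathbb K$-Lie subalgebra. Iterating the same identity gives $(\mathbb K U)_L^k=\mathbb K\,U_L^k$ for all $k$, hence $(\mathbb K U)_L^n=0$. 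Thus $\mathbb K U$ is a nilpotent subalgebra of the Lie $A$-algebra $\mathcal L$, so it is abelian. In particular $[U,U]=0$, and $U$ is a zero algebra.

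The only delicate point is this change of scalars in Step 2. Nilpotency of $\mathcal P$ is tested on $\mathbb F$-subalgebras, while the $A$-property of $\mathcal L$ is given over $\mathbb K$. The $\mathbb K$-bilinearity of the bracket is exactly what makes the passage to $\mathbb K U$ work. I would also note briefly, for completeness, that $\widehat{\mathcal L}^{\,\mathbb K}$ is indeed a Poisson algebra. Commutativity and associativity of the product are immediate. The Leibniz rule reduces to
$$[\lambda y+\mu x,z]=\lambda[y,z]+\mu[x,z]$$
on the $\mathcal L$-components, since the product of two elements of $\mathcal L$ is zero. The paper already asserts this, so it need not be reproved.
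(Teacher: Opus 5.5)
Your proof is correct and follows the paper's argument: associative nilpotency forces the $\mathbb K$-component of every element of $U$ to vanish, so $U\subseteq\mathcal L$ and $U_A^2=0$, and the Lie $A$-property of $\mathcal L$ then gives $[U,U]=0$. Your passage to the $\mathbb K$-span $\mathbb K U$, using $(\mathbb K U)_L^k=\mathbb K\,U_L^k$, spells out the restriction-of-scalars step that the paper's one-line proof leaves implicit.
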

\begin{proof}
Let $U$ be a nilpotent subalgebra of
$\widehat{\mathcal L}^{\,\mathbb K}$. Then $U\subseteq\mathcal L$, since
$\mathbb K$ is a field and hence contains no nonzero nilpotent elements.
So $U$ is a zero algebra, because $\mathcal L_A^2=0$ and
$\mathcal L$ is a Lie $A$-algebra.
\end{proof}

\begin{remark}
    A standard example of a Lie $A$-algebra is
    $\mathfrak{aff}_1(\mathbb{C})$. Hence, the previous proposition
    provides a nontrivial example of a Poisson $A$-algebra. Moreover, regarding this algebra over $\mathbb{R}$ by
    restriction of scalars, we obtain a Poisson $A$-algebra over
    $\mathbb{R}$. This shows that, in a structure theorem over a general
    base field, one must allow unitizations over proper field extensions.
\end{remark}

We will show that unitizations of this form are the natural building
blocks of finite-dimensional Poisson $A$-algebras under the
separability assumption appearing above.

\begin{theor}\label{refined}
Let $\mathcal P$ be a finite-dimensional Poisson $A$-algebra over a
field $\mathbb F$ of characteristic different from $2$. If
$\mathcal P/J(\mathcal P_A)$ is separable over $\mathbb F$, then $\mathcal P$ can be written as a direct sum of Poisson ideals
$$
\mathcal P
\cong
\mathcal L_0\oplus
\widehat{\mathcal L}_1^{\,\mathbb K_1}
\oplus\cdots\oplus
\widehat{\mathcal L}_m^{\,\mathbb K_m}
$$
where
$\mathbb K_1,\ldots,\mathbb K_m$ are finite separable field
extensions of $\mathbb F$, $\mathcal L_0$ is a Lie $A$-algebra over
$\mathbb F$, and $\mathcal L_i$ is a Lie $A$-algebra over
$\mathbb K_i$ for $1\leq i\leq m$.
\end{theor}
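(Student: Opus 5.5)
Our plan is to start from Corollary~\ref{corosep}: $\mathcal P=S\ltimes J$ with $S=\mathbb K_1\oplus\cdots\oplus\mathbb K_m$ a Malcev--Wedderburn complement, $J=J(\mathcal P_A)$, $J_A^2=0$, $J_L$ a Lie $A$-algebra and $[\mathcal P,S]=0$. Let $e_i$ be the unit of $\mathbb K_i$ and $e=e_1+\cdots+e_m$; $e$ is a principal idempotent of $\mathcal P_A$. By Lemma~\ref{idem} every $e_i$ is Lie central. The Peirce-type argument of Lemma~\ref{peirce}, applied to each $e_i$, shows that $e_i\mathcal P$ and $(1-e)\mathcal P$ are Poisson ideals, since $[x,e_iy]=e_i[x,y]$. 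Because the $e_i$ are orthogonal, we obtain a direct sum of Poisson ideals
$$\mathcal P=(1-e)\mathcal P\oplus e_1\mathcal P\oplus\cdots\oplus e_m\mathcal P.$$

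Set $\mathcal L_0=(1-e)\mathcal P$. Since $(1-e)\cdot S=0$, we have $\mathcal L_0\subseteq J$, so $(\mathcal L_0)_A^2=0$. A Lie subalgebra of a Lie $A$-algebra is again a Lie $A$-algebra (its nilpotent subalgebras are nilpotent subalgebras of the ambient algebra), and $\mathcal L_0$ is a Lie subalgebra of $J_L$, so $\mathcal L_0$ is a Lie $A$-algebra over $\mathbb F$.

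For $i\ge1$ we have $e_i\mathcal P=\mathbb K_i\oplus\mathcal L_i$ with $\mathcal L_i:=e_iJ$, which is an ideal with $(\mathcal L_i)_A^2=0$ and is Lie $A$ over $\mathbb F$ by the same argument. We then check the following points.
\begin{enumerate}
\item $\mathcal L_i$ is a $\mathbb K_i$-vector space via $\lambda\cdot x=\lambda x$. This is a unital module structure because $e_i$ acts as the identity on $e_iJ$.
\item The bracket on $\mathcal L_i$ is $\mathbb K_i$-bilinear: $[\lambda x,y]=\lambda[x,y]+x[\lambda,y]=\lambda[x,y]$, using $[\mathcal P,S]=0$.
\item The products are those of the unitization: $(\lambda+x)(\mu+y)=\lambda\mu+\lambda y+\mu x$ because $xy\in J_A^2=0$; and $[\lambda+x,\mu+y]=[x,y]$, again using $[\mathcal P,S]=0$.
\end{enumerate}
Hence $e_i\mathcal P\cong\widehat{\mathcal L}_i^{\,\mathbb K_i}$, regarded over $\mathbb F$.

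The step requiring the most care is showing that $\mathcal L_i$ is a Lie $A$-algebra \emph{over $\mathbb K_i$}, i.e.\ that $\mathbb K_i$-subalgebras that are nilpotent are abelian. This is immediate, because every $\mathbb K_i$-subalgebra is in particular an $\mathbb F$-subalgebra of $J_L$, and nilpotency does not depend on the base field. So the only real work lies in the inputs taken from Corollary~\ref{corosep}, notably the centrality $[\mathcal P,S]=0$, which we take as given.
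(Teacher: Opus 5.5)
Your proposal is correct and is essentially the paper's proof. Both start from Corollary~\ref{corosep} and split along the Lie-central idempotents $e_1,\dots,e_m$; your $(1-e)\mathcal P$ coincides with the paper's $\mathcal L_0=\{x\in J(\mathcal P_A): ex=0\}$ because $(1-e)S=0$. Both then verify the unitization structure and the Lie $A$-property over $\mathbb K_i$ in the same way. The only cosmetic difference is that you decompose $\mathcal P$ directly by a Peirce argument, whereas the paper first decomposes $J(\mathcal P_A)$ and then attaches the fields $\mathbb K_i$.
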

\begin{proof}
    By Corollary~\ref{corosep}, we can write $\mathcal{P} = (\mathbb{K}_1 \oplus \ldots \oplus \mathbb{K}_m) \ltimes J(\mathcal{P}_A)$ where $J(\mathcal{P}_A)_A^2 = 0$, $J(\mathcal{P}_A)_L$ is a Lie $A$-algebra and $[\mathcal{P}, \mathbb{K}_1 \oplus \ldots \oplus \mathbb{K}_m]=0$.
    Let $e_i$ be the identity element of the field $\mathbb{K}_i$ and put $e=e_1+\ldots+ e_m$.  
    Define 
    $$\mathcal{L}_i = e_i J(\mathcal{P}_A) = \mathbb{K}_i J(\mathcal{P}_A) \textrm{ for $1\leq i\leq m$} \qquad  \textrm{ and } \qquad \mathcal{L}_0 = \left\{x\in J(\mathcal{P}_A): ex = 0 \right\}.$$

    We claim $J(\mathcal{P}_A) = \mathcal{L}_0 \oplus \mathcal{L}_1 \oplus \ldots \oplus \mathcal{L}_m$ as Poisson algebras. Indeed, for $x\in J(\mathcal{P}_A)$ we have
    $$x = (x-ex) + e_1x + \ldots + e_mx,$$
    where $(x-ex)\in \mathcal{L}_0$ and $e_ix \in \mathcal{L}_i$. Also, if $x_0  +\ldots + x_m = 0$ with $x_i \in \mathcal{L}_i$, then multiplying by $e_i$ gives $x_i = 0$ for every $i\geq 1$, and then $x_0 =0$. So the sum is direct as vector spaces. 
    
    Moreover, we note that each $\mathcal{L}_i$ is an ideal of $J(\mathcal{P}_A)_L$, because for all  $x, y\in J(\mathcal{P}_A)$ and every $1\leq i \leq m$ we have
    $[e_ix, y] = e_i[x,y] \in \mathcal{L}_i$. Also, for all $z\in \mathcal{L}_0$, we have $e[z, y] = [ez, y] = 0$, so $[z,y]\in \mathcal{L}_0$. Hence, for $i\neq j$ we have proven $[\mathcal{L}_i, \mathcal{L}_j] =0$. Together with $J(\mathcal{P}_A)_A^2=0$, this proves the previous claim. Here, we observe that the ideals $\mathcal L_i$ are Lie $A$-algebras over $\mathbb F$, as so is $J(\mathcal{P}_A)_L$.

    Now, notice that $ \mathbb{K}_i\mathcal{L}_j = 0$ and $[\mathbb{K}_i, \mathcal{L}_j] = 0$ for $i\neq j$. Therefore, we can write 
    $$\mathcal{P} = \mathcal{L}_0 \oplus (\mathbb{K}_1\ltimes \mathcal{L}_1) \oplus \ldots \oplus (\mathbb{K}_m\ltimes \mathcal{L}_m)$$
    as Poisson algebras.
    Since $J(\mathcal{P}_A)_A^2=0$ and
    $[\mathcal{P},\mathbb{K}_i]=0$, for
    $\lambda,\mu\in\mathbb{K}_i$ and
    $x,y\in\mathcal{L}_i$ we have
    $$
(\lambda,x)(\mu,y)
=
(\lambda\mu,\lambda y+\mu x),
\qquad
[(\lambda,x),(\mu,y)]
=
(0,[x,y]).
$$
 In particular, $\mathcal L_i$ is a Lie algebra over
    $\mathbb K_i$, because $[\mathbb K_i, \mathcal{L}_i]=0$. Additionally, every nilpotent $\mathbb K_i$-subalgebra
    of $\mathcal L_i$ is also a nilpotent $\mathbb F$-subalgebra of
    $J(\mathcal P_A)_L$, and hence is abelian. Thus $\mathcal L_i$ is
    a Lie $A$-algebra over $\mathbb K_i$.
    Hence, for $1\leq i\leq m$ we have $\mathbb{K}_i\ltimes\mathcal{L}_i    =\widehat{\mathcal{L}}_i^{\,\mathbb{K}_i},$ and the result follows.
\end{proof}

\begin{coro}
Let $\mathcal P$ be a finite-dimensional Poisson $A$-algebra over an
algebraically closed field $\mathbb F$ of characteristic different from
$2$. Then
$$\mathcal P
\cong
\mathcal L_0\oplus
\widehat{\mathcal L}_1^{\,\mathbb F}
\oplus\cdots\oplus
\widehat{\mathcal L}_m^{\,\mathbb F},$$
where $\mathcal L_0,\ldots,\mathcal L_m$ are finite-dimensional Lie
$A$-algebras over $\mathbb F$.
\end{coro}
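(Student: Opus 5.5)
This follows from Theorem~\ref{refined}; what has to be checked is that its hypotheses hold and that all the field extensions collapse to $\mathbb F$. Since $\mathbb F$ is algebraically closed, every finite field extension $\mathbb K/\mathbb F$ is trivial, i.e.\ $\mathbb K=\mathbb F$.

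First, $\mathcal P/J(\mathcal P_A)$ is separable over $\mathbb F$. By Theorem~\ref{quotient}, which needs only characteristic different from $2$, there is a Poisson isomorphism $\mathcal P/J(\mathcal P_A)\cong\mathbb K_1\oplus\cdots\oplus\mathbb K_m$ with each $\mathbb K_i$ a finite extension of $\mathbb F$. Algebraic closedness forces $\mathbb K_i=\mathbb F$ for every $i$, so the quotient is $\mathbb F^m$ as an associative algebra, which is trivially separable. If $\mathcal P_A$ is nilpotent, then $J(\mathcal P_A)=\mathcal P$, $m=0$, and the quotient is zero. This case is either covered vacuously by Theorem~\ref{refined}, or handled directly: Lemma~\ref{cor33} gives $\mathcal P_A^2=0$ and $\mathcal P_L$ is a Lie $A$-algebra, so $\mathcal P=\mathcal L_0$.

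Next, apply Theorem~\ref{refined}. This gives $\mathcal P\cong\mathcal L_0\oplus\widehat{\mathcal L}_1^{\,\mathbb K_1}\oplus\cdots\oplus\widehat{\mathcal L}_m^{\,\mathbb K_m}$, where $\mathcal L_0$ is a Lie $A$-algebra over $\mathbb F$ and each $\mathcal L_i$ is a Lie $A$-algebra over $\mathbb K_i=\mathbb F$. The unitization over $\mathbb K_i$, viewed over $\mathbb F$ by restriction of scalars, is then exactly $\widehat{\mathcal L}_i^{\,\mathbb F}$, because restricting scalars along the identity changes nothing. Each $\mathcal L_i\subseteq\mathcal P$ is finite-dimensional, since $\mathcal P$ is.

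There is no real obstacle. The only point needing care is the identification $\mathbb K_i=\mathbb F$, and this also makes the $\mathbb K_i$-linear structure on $\mathcal L_i$ in Theorem~\ref{refined} agree with its $\mathbb F$-structure. That agreement matters because it means the Lie $A$-property of $\mathcal L_i$ over $\mathbb K_i$ is the same as the Lie $A$-property over $\mathbb F$.
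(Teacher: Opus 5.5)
Your proposal is correct and takes the same route as the paper. The paper states this corollary without a separate proof, as an immediate consequence of Theorem~\ref{refined}: since $\mathbb F$ is algebraically closed, every $\mathbb K_i$ equals $\mathbb F$, so $\mathcal P/J(\mathcal P_A)\cong\mathbb F^m$ is separable and each unitization is $\widehat{\mathcal L}_i^{\,\mathbb F}$. Your extra checks, namely the nilpotent case $m=0$ and the agreement of the $\mathbb K_i$-structure on $\mathcal L_i$ with its $\mathbb F$-structure, are sound.
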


}
\subsection{The maximal zero ideal} We next use the preceding results to identify a canonical zero
ideal of a Poisson $A$-algebra. 
In this section, we assume the field $\mathbb{F}$ has characteristic $p\neq 2, 3$. Define the sets
$$\mathcal{Q}(\mathcal{P}) = \left\{x\in \mathcal{P}: P_x^2 = Q_x^2 = 0\right\} \qquad \textrm{ and }\qquad \mathcal{Q}_L(\mathcal{P}) = \left\{x\in \mathcal{P}: Q_x^2 = 0\right\}.$$

\begin{lemma}\label{sqlm}
    Let $\mathcal{P}$ be a Poisson algebra. For $a\in \mathcal{Q}(\mathcal{P})$, the subalgebra $S_{\mathbb{F}a}$ is nilpotent. Moreover, if $\mathcal{P}$ is a Poisson $A$-algebra, then $a^2 = 0$. 
\end{lemma}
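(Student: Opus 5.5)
We need to show that for $a$ with $P_a^2=0$ and $Q_a^2=0$, the Poisson closure $S_{\mathbb F a}$ is nilpotent. The second claim then follows: if $\mathcal P$ is a Poisson $A$-algebra, the nilpotent subalgebra $S_{\mathbb F a}$ must be a zero algebra, so $a\cdot a=0$.

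The plan is to reduce to Theorem~\ref{nilcharacterization}, or more directly to Proposition~\ref{propnil}, applied to the Poisson algebra $S_{\mathbb F a}$ itself. The Lie subalgebra $U=\mathbb F a$ is one-dimensional, hence abelian and in particular nilpotent. So it suffices to show that $(S_{\mathbb F a})_A$ is nilpotent.

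Since $S_{\mathbb F a}=\sum_{k\ge1}\mathbb F a^k$ is spanned by powers of $a$, the key step is the identity $a^3=P_a^2(a)=0$. Consequently $S_{\mathbb F a}=\mathbb F a+\mathbb F a^2$, and any product of three elements of $S_{\mathbb F a}$ vanishes, so $(S_{\mathbb F a})_A^3=0$. Proposition~\ref{propnil}, applied to the Poisson algebra $S_{\mathbb F a}$ (whose associative part is nilpotent) with $U=\mathbb F a$, then shows that $S_{\mathbb F a}$ is nilpotent.

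Alternatively, one can verify nilpotency by hand. We have $[a,a]=0$ and $[a,a^2]=2a[a,a]=0$, so $S_{\mathbb F a}$ is abelian as a Lie algebra. Together with $(S_{\mathbb F a})_A^3=0$, this gives $S_{\mathbb F a}^{3}\subseteq (S_{\mathbb F a})_A^2\cdot S_{\mathbb F a}=0$. Hence the hypothesis $Q_a^2=0$ is not even needed for the first claim.

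There is no real obstacle here. The only point needing care is to apply the earlier results inside $S_{\mathbb F a}$ rather than in $\mathcal P$, since $\mathcal P_A$ itself need not be nilpotent.
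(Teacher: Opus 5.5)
Your proposal is correct and follows the paper's own argument: the paper simply notes that $S_{\mathbb F a}=\mathbb F a+\mathbb F a^2$ (which relies on $a^3=P_a^2(a)=0$) is nilpotent, and the $A$-algebra hypothesis then forces $a^2=0$. Your verification, either via Proposition~\ref{propnil} or by hand from Lie-abelianness and $(S_{\mathbb F a})_A^3=0$, just supplies the details the paper leaves as ``clearly''. Your remark that $Q_a^2=0$ is not needed for the first claim is also accurate.
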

\begin{proof}
    Clearly, we have $S_{\mathbb{F}a} = \mathbb{F} a + \mathbb{F} a^2$, which is nilpotent. 
\end{proof}

We write $\langle a, b\rangle$ for the subalgebra of $\mathcal{P}_L$ generated by $a$ and $b$. 

\begin{lemma}\label{lmab}
    Let $\mathcal{P}$ be a Poisson $A$-algebra. For $a, b\in \mathcal{Q}(\mathcal{P})$, denote $U:= \langle a, b\rangle$. Then $S_{U}$ is a zero subalgebra of $\mathcal{P}$. In particular, we have $ab = 0$ and $[a,b] = 0$.
\end{lemma}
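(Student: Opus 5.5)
The plan is to show that $S_U$ is a nilpotent Poisson subalgebra. The $A$-algebra hypothesis then forces $S_U$ to be a zero algebra, and the identities $ab=0$ and $[a,b]=0$ follow because $a,b\in U\subseteq S_U$. Recall that we are in characteristic $p\neq 2,3$, and that $P_a^2=Q_a^2=0=P_b^2=Q_b^2$.

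\textbf{Step 1: collapsing $U$.} Start by computing $U=\langle a,b\rangle$. Lemma~\ref{sqlm} gives $a^2=b^2=0$, since $S_{\mathbb F a}$ is nilpotent and therefore a zero algebra. Set $c=[a,b]$. Since $Q_a^2=0$ we have $[a,c]=0$, and since $Q_b^2=0$ we have $[b,c]=-Q_b^2(a)=0$. Hence $U=\mathrm{span}(a,b,c)$ is a Heisenberg-type Lie algebra, nilpotent of class at most $2$, and $c$ is central in $U$.

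\textbf{Step 2: nilpotency of $(S_U)_A$.} The goal here is to show that $S_U$ is associatively nilpotent. Proposition~\ref{propnil} would then apply to the Poisson algebra $S_U$, since it is generated by the nilpotent Lie algebra $U$, and would give that $S_U$ is nilpotent. Because $S_U$ is commutative and generated by $a,b,c$, it suffices to show that $c$ is associatively nilpotent; the monomial counting in the proof of Theorem~\ref{nilcharacterization} then gives associative nilpotency, as $a^2=b^2=0$. To handle $c$, use the Leibniz rule on $a^2=0$: $0=[a^2,b]=2ac$, so $ac=0$, and similarly $bc=0$. Next, $c^2=[a,b]c=[a,bc]-b[a,c]=0$. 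The condition $P_a^2=0$ (that is, $a^2\mathcal P=0$) is not even needed beyond $a^2=0$. Consequently every monomial of degree at least $2$ in $a,b,c$ vanishes except multiples of $ab$. Moreover $ab\cdot a=a^2b=0$, $ab\cdot b=0$ and $ab\cdot c=0$. So $(S_U)_A=\mathrm{span}(a,b,c,ab)$ satisfies $(S_U)_A^3=0$.

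\textbf{Step 3: conclusion.} Apply Proposition~\ref{propnil} with ambient algebra $S_U$, whose associative part is nilpotent, and Lie subalgebra $U$. This shows $S_U$ is nilpotent, hence a zero algebra by the $A$-property. In particular $ab=0$ and $c=[a,b]=0$.

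\textbf{Main obstacle.} The main risk lies in Step 1. The relation $[b,c]=0$ needs care with signs: $[b,[a,b]]=-[b,[b,a]]=-Q_b^2(a)=0$, which is fine. The other delicate point is making sure Lemma~\ref{sqlm}'s conclusion $a^2=0$ is available; it is, since $S_{\mathbb F a}$ is nilpotent and the algebra is an $A$-algebra. If some identity fails, a fallback is to check Lie nilpotency of the four-dimensional space $\mathrm{span}(a,b,c,ab)$ directly. For example, $[a,ab]=a[a,b]=ac=0$ and $[b,ab]=b[b,a]+a[b,b]$, which is a multiple of $bc=0$. This shows $S_U$ is nilpotent by hand.
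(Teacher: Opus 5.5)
Your proposal is correct and follows essentially the same route as the paper. You get $a^2=b^2=0$ from Lemma~\ref{sqlm}, then $ac=bc=c^2=0$ via the Leibniz rule, so $S_U=U+\mathbb{F}ab$ is associatively nilpotent, and Proposition~\ref{propnil} together with the $A$-property makes $S_U$ a zero algebra. You also justify $U=\mathrm{span}(a,b,c)$ from $Q_a^2=Q_b^2=0$, a step the paper states without comment.
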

\begin{proof}
    We have $U = \mathbb{F} a + \mathbb{F} b + \mathbb{F} c$, where $c = [a, b]$.
    By Lemma~\ref{sqlm}, we have $a^2= b^2 = 0$. Therefore, $ac = \frac{1}{2}[a^2, b] = 0$ and $bc = - \frac{1}{2}[a, b^2] = 0$. 
    Also, $c^2 = [a,b]c = [a,bc] - [a,c]b = 0$. Hence, $U^3_A = 0$ and $S_U =  U + \mathbb{F} ab$. This proves $S_U$ is  associative nilpotent. By Proposition~\ref{propnil}, $U$ is a nilpotent subalgebra of $(S_U)_L$ and $(S_U)_A$ is nilpotent, so $S_U$ is nilpotent. Since $\mathcal{P}$ is a Poisson $A$-algebra, $S_U$ is a zero algebra.
\end{proof}

The following useful relations are attributed to Kostrikin in \cite{PS90}.

\begin{lemma}\label{eqKostrikin}
    Let $\mathcal{P}$ be a Poisson algebra over a field of characteristic $p\neq 2$. If $a\in \mathcal{Q}_L(\mathcal{P})$ and $x\in \mathcal{P}$, then
    \begin{enumerate}
        \item  $Q_aQ_xQ_a=0$,
        \item  $[Q_a, Q_x]^2 = - Q_a Q_x^2 Q_a$ and
        \item  if moreover $p\neq 3$, then $Q_aQ_x^2 Q_aQ_x^2 Q_a = 0$.
    \end{enumerate}
\end{lemma}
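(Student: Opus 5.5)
These are purely Lie-theoretic identities: only the Lie structure $\mathcal P_L$ is used, so $Q_x=\operatorname{ad}x$ and the Jacobi identity gives $Q_{[x,y]}=[Q_x,Q_y]$. The plan is to derive everything from the single hypothesis $Q_a^2=0$ by polarization, i.e. by expanding $Q_{[a,[a,x]]}$.

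For (i), apply $Q_a^2=0$ to the element $[a,x]$ read as an operator: $Q_{[a,[a,x]]}=[Q_a,[Q_a,Q_x]]$. Expanding,
$$Q_{[a,[a,x]]}=Q_a^2Q_x-2Q_aQ_xQ_a+Q_xQ_a^2=-2Q_aQ_xQ_a.$$
On the other hand $[a,[a,x]]=Q_a^2(x)=0$, so $Q_{[a,[a,x]]}=0$. Since $p\neq2$, this gives $Q_aQ_xQ_a=0$.

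For (ii), expand $[Q_a,Q_x]^2=(Q_aQ_x-Q_xQ_a)^2$. Of the four terms, $Q_aQ_xQ_aQ_x$ and $Q_xQ_aQ_xQ_a$ contain $Q_aQ_xQ_a$ and vanish by (i), and $Q_xQ_a^2Q_x=0$. What remains is $-Q_aQ_x^2Q_a$.

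For (iii), set $T=Q_aQ_x^2Q_a$, so that $T=-[Q_a,Q_x]^2=-Q_{[a,x]}^2$ by (ii) and Jacobi. The aim is $TQ_x^2Q_a=0$. First I would use (i) with $x$ replaced by $x^{\ast}$ for suitable elements, in particular the polarized form of (i): replacing $x$ by $[x,[x,y]]$ or by $y+x$ yields $Q_aQ_{y}Q_a=0$ for every $y$, including $y=[x,[x,a]]$, for which $Q_y=[Q_x,[Q_x,Q_a]]=Q_x^2Q_a-2Q_xQ_aQ_x+Q_aQ_x^2$. Sandwiching between $Q_a$'s gives
$$Q_aQ_x^2Q_a^2-2Q_aQ_xQ_aQ_xQ_a+Q_a^2Q_x^2Q_a=0,$$
which is trivial. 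So I would instead go one level up and use $y=[x,[x,[x,a]]]$ together with its companion $y'=Q_x^2Q_a^{\ast}$-type expressions multiplied on the right by $Q_x^2Q_a$. The concrete route is to compute $Q_aQ_{y}Q_aQ_x\cdots$ and $Q_a Q_x^2Q_aQ_{y}Q_a$ with $y=[x,[x,[x,[x,a]]]]$. In the expansion $\sum_k\binom4k(-1)^kQ_x^{4-k}Q_aQ_x^k$, the only surviving term after sandwiching between $Q_a$'s is $\binom42 Q_aQ_x^2Q_aQ_x^2Q_a=6\,Q_aQ_x^2Q_aQ_x^2Q_a$. The terms with $k=0,4$ die by $Q_a^2=0$, and those with $k=1,3$ contain $Q_aQ_xQ_a$ after regrouping. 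The main obstacle is making the $k=1,3$ terms vanish: they are $Q_aQ_x^3Q_aQ_xQ_a$ and $Q_aQ_xQ_aQ_x^3Q_a$, and each contains $Q_aQ_xQ_a=0$ directly. Then, since $Q_aQ_yQ_a=0$ by (i) applied to $y$, we get $6\,Q_aQ_x^2Q_aQ_x^2Q_a=0$, and $p\neq2,3$ finishes the argument.
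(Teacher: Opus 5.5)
Your proof is correct and follows the paper's argument: (i) comes from $Q_{[a,[a,x]]}=-2Q_aQ_xQ_a$, (ii) from expanding the square and killing terms with (i) and $Q_a^2=0$, and (iii) from applying (i) to $y=[x,[x,[x,[x,a]]]]$ and expanding $Q_y=\sum_k\binom4k(-1)^kQ_x^{4-k}Q_aQ_x^k$, so that only $6\,Q_aQ_x^2Q_aQ_x^2Q_a$ survives. In a write-up, drop the exploratory detour through $[x,[x,a]]$ and the ``polarization'' remark, since (i) already holds for every element of $\mathcal P$.
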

\begin{proof}
    Since $[Q_a, [Q_x, Q_a]]= Q_{[a, [x, a]]}=0$, we have
    $$[Q_a, [Q_x, Q_a]] = Q_a Q_x Q_a - Q_a Q_a Q_x - Q_x Q_a Q_a + Q_a Q_x Q_a = 2 Q_a Q_x Q_a.$$
    Hence, we obtain $Q_aQ_xQ_a=0$. For the second equation, we obtain the first equation
    $$[Q_a, Q_x]^2 = Q_aQ_xQ_aQ_x - Q_aQ_xQ_xQ_a - Q_xQ_aQ_aQ_x + Q_xQ_aQ_xQ_a= - Q_aQ_xQ_xQ_a.$$
    Similarly, the third equation follows from $Q_a Q_{[x, [x, [x, [x, a]]]]}Q_a = 0$, since
    $$Q_a Q_{[x, [x, [x, [x, a]]]]}Q_a = Q_a [Q_x, [Q_x, [Q_x, [Q_x, Q_a]]]] Q_a = 6 Q_a Q_x^2 Q_a Q_x^2 Q_a.$$
    This proves the last equation when the characteristic of the base field is neither $2$ or $3$. 
\end{proof}

\begin{propo}
    Let $\mathcal{P}$ be a Poisson $A$-algebra. Then $\mathcal{Q}(\mathcal{P})$ is a zero ideal.
\end{propo}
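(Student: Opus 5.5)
The proof has three parts: show $\mathcal Q(\mathcal P)$ is a subspace, show it is stable under $P_x$ and $Q_x$ for every $x\in\mathcal P$, and show it is a zero algebra. Throughout, the standing assumption $p\neq 2,3$ of this subsection is in force.

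The zero-algebra property comes from Lemma~\ref{lmab}: for $a,b\in\mathcal Q(\mathcal P)$ we get $ab=0$ and $[a,b]=0$. Lemma~\ref{sqlm} gives $a^2=0$. So once $\mathcal Q(\mathcal P)$ is known to be a Poisson ideal, it is a zero ideal.

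I would prove ideal-stability before additivity, because additivity reduces to it. Let $a\in\mathcal Q(\mathcal P)$ and $x\in\mathcal P$.

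\emph{Associative product.} Since $P_{ax}^2=P_{a^2x^2}=0$, it remains to check $Q_{ax}^2=0$. Write $Q_{ax}=P_xQ_a+P_aQ_x$ and expand the square. The relation $P_{[u,v]}=Q_uP_v-P_vQ_u$ lets me move the $P$'s to the left. Each resulting term should then contain $P_a^2=0$, or $Q_a^2=0$, or a factor $P_aQ_a=\tfrac12 Q_{a^2}=0$, or a factor $P_{[a,x]}P_a=P_{a[a,x]}=P_{\frac12[a^2,x]}=0$.

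\emph{Bracket.} Put $y=[a,x]$. Then $Q_y^2=[Q_a,Q_x]^2=-Q_aQ_x^2Q_a$ by Lemma~\ref{eqKostrikin}(ii), so I must show $Q_aQ_x^2Q_a=0$. Following the Lie-algebra argument of \cite{PS90}, fix $z$ and set $w=Q_aQ_x^2Q_a(z)$.
\begin{enumerate}
\item Lemma~\ref{eqKostrikin}(i) gives $Q_aQ_uQ_a=0$ for all $u$; combined with (iii), which says $(Q_aQ_x^2)^2Q_a=0$, this should give $Q_w^2=0$.
\item Next I would check that $w$ also satisfies the associative conditions: $P_w^2=0$, and the needed products $aw=0$ and $[a,w]=0$. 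These should follow from $a^2=0$, the Leibniz rule and $P_aQ_a=0$.
\item With $w\in\mathcal Q(\mathcal P)$, I would look at $\langle a,w\rangle$, whose Poisson closure is nilpotent by Proposition~\ref{propnil}. I then want to conclude $w=0$ from the $A$-property, via $\mathcal Q(\mathcal P)$ having no nonzero elements of the form $Q_a(\cdot)$ of this shape.
\end{enumerate}

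\emph{Additivity.} For $a,b\in\mathcal Q(\mathcal P)$ we have $P_{a+b}^2=2P_{ab}=0$. Since $[a,b]=0$, the maps $Q_a$ and $Q_b$ commute, so $Q_{a+b}^2=2Q_aQ_b$. For $x\in\mathcal P$, the ideal step gives $[b,x]\in\mathcal Q(\mathcal P)$, and then Lemma~\ref{lmab} gives $Q_aQ_b(x)=[a,[b,x]]=0$. Scalar multiples are clear.

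The main obstacle is the bracket step, proving $Q_aQ_x^2Q_a=0$. Steps (ii) and (iii) there are where the $A$-hypothesis must really be used, and I am not yet sure they close as planned. If they do not, I would try instead to show that $S_{\langle a,y\rangle}$ is nilpotent and apply Lemma~\ref{lmab}-style reasoning to it directly.
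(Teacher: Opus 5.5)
There is a genuine gap in the bracket step, exactly where you suspected, and your plan there cannot close. In (iii) you want the $A$-property on $\langle a,w\rangle$ to force $w=0$. But your own step (ii) gives $a^2=w^2=aw=[a,w]=0$, so $S_{\langle a,w\rangle}=\mathbb{F}a+\mathbb{F}w$ is already a zero algebra, and the $A$-hypothesis says nothing about $w$. The appeal to ``$\mathcal{Q}(\mathcal{P})$ having no nonzero elements of this shape'' is just a restatement of the claim $Q_aQ_x^2Q_a=0$. Your fallback fails for the same reason: for $y=[a,x]$ one has $[a,y]=Q_a^2(x)=0$, $ay=\tfrac12[a^2,x]=0$ and $y^2=\tfrac12 Q_a^2(x^2)=0$. (Step (i), $Q_w^2=0$, is also left unjustified, but that is moot.) The $A$-property only gives information on a nilpotent subalgebra that is not visibly a zero algebra. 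The missing idea is to realise $w$ itself as a commutator inside such a subalgebra.

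This is the device of the Lie-algebra argument you mention, and it is what the paper does. Take $b=[a,z]$ and $c=[[a,x],x]=Q_x^2(a)$. Since $[a,c]=-Q_aQ_xQ_a(x)=0$, the maps $Q_a$ and $Q_c$ commute. Expanding $Q_c=[Q_x,[Q_x,Q_a]]$ with Lemma~\ref{eqKostrikin}(i) gives $Q_aQ_c=Q_aQ_x^2Q_a$, hence $w=Q_cQ_a(z)=[c,b]$. Moreover $Q_b^2(c)=-Q_aQ_z^2Q_a(c)=0$ and $Q_c^2(b)=Q_aQ_x^2Q_aQ_x^2Q_a(z)=0$, so $U=\langle b,c\rangle=\mathbb{F}b+\mathbb{F}c+\mathbb{F}[b,c]$ is Lie nilpotent. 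One checks $b^2=0$, $c^3=0$ and $[b,c]^2=0$, so $(S_U)_A$ is nilpotent, being commutative and generated by nilpotent elements. By Proposition~\ref{propnil}, $S_U$ is then nilpotent, hence a zero algebra. So $w=[c,b]=0$, i.e.\ $Q_{[a,x]}^2=-Q_aQ_x^2Q_a=0$.

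Separately, your associative step needs one more vanishing factor than you list. After reordering, $Q_{ax}^2$ reduces to $2P_xP_{[a,x]}Q_a$, which is not covered by your four factors. It does vanish, because $0=Q_a^2(uv)=2[a,u][a,v]$. With that addition your associative step is correct and, unlike the paper's version (which uses bracket-stability and Lemma~\ref{lmab}), independent of the bracket step. Your additivity argument matches the paper's.
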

\begin{proof}
    Let $a\in \mathcal{Q}(\mathcal{P})$ and $x\in \mathcal{P}$. First, we show $[\mathcal{Q}(\mathcal{P}), \mathcal{P}]\subseteq \mathcal{Q}(\mathcal{P})$, i.e.   
    $Q_{[a, x]}^2=0$ and $P_{[a, x]}^2=0$.  Notice that $a^2 = 0$ implies that
    \begin{equation}\label{eqt}
        P_aQ_a = 0, \qquad Q_aP_a = 0, \qquad P_aQ_xP_a =0.
    \end{equation}

    Let $x, z\in \mathcal{P}$. Setting $b = [a, z]$ and $c = [[a,x], x]$, we obtain that the subalgebra $\langle b, c\rangle$, is nilpotent, as in \cite[Proposition 1]{PS90}. Indeed, we have
    $$[b, [b, c]] = Q_b^2(c) = Q_{[a,z]}^2(c) = [Q_a, Q_z]^2(c) = - Q_a Q_z^2 Q_a (c) = 0,$$
    $$[c, [c, b]] = Q_c^2(b) = Q_{[[a,x], x]}^2Q_a(z) = [[Q_a, Q_x], Q_x]^2Q_a(z) = Q_aQ_x^2Q_aQ_x^2Q_a(z) = 0.$$
    Write $U = \langle b, c\rangle$. 
    We prove that $S_U$ is associative nilpotent. Note that $U = \mathbb{F} b + \mathbb{F} c + \mathbb{F} [b,c] $. Now
    $$2 b^2 = 2 [a, z] [a, z] = [a, z[a,z]] - 2 [a, [a, z]]z  = [a, [a,z^2]] = 0.$$
Put $d=[a,x]$. Then $d^2=0$ as above. Also, $[b, c] [b, c] = [b[b, c], c] - b [[b,c], c]=0$.
    $$c^2 = [d, x][d, x] = [d[d,x],x]-d[[d,x],x]= -d[[d,x],x], \textrm{ so} $$
    $$2c^3=-2d[d,x][[d,x],x]=[d^2,x][[d,x],x]=0.$$
    
Hence $S_U = \mathbb{F} b + \mathbb{F} c + \mathbb{F} c^2 + \mathbb{F} [b,c] + \mathbb{F} [b,c^2]+\mathbb{F} bc +\mathbb{F}bc^2$, which is nilpotent, since $(S_U)_A^4= 0$.

    Therefore, $U$ is a nilpotent subalgebra of $(S_U)_L$ and $(S_U)_A$ is nilpotent, so $S_U$ is nilpotent and a zero algebra. In particular, we have $[b, c]=0$. It follows
    \begin{equation}
        \begin{split}
            Q_a Q_x^2 Q_a & = Q_a Q_x Q_aQ_x + Q_a Q_x Q_{[x, a]} = Q_a Q_{[x, a]} Q_x + Q_a Q_{[x,[x, a]]}\\
            & = Q_a Q_x Q_aQ_x  - Q_a Q_a Q_xQ_x + Q_a Q_{[x,[x, a]]} = Q_a Q_{[x,[x, a]]}.
        \end{split}
    \end{equation}
    But $Q_a Q_{[x,[x, a]]} = 0$, since $[b, c]=0$. Therefore, $Q_{[a, x]}^2 =  [Q_a, Q_x]^2 = 0$, by 
    Lemma~\ref{eqKostrikin}.   
    Also, we obtain $P_{[a, x]}^2= P_{[a, x]^2} = 0$, because $[a,x]^2 = [a, x[a,x]] - [a, [a, x]]x = 0$. 

    Now, we prove $\mathcal{Q}(\mathcal{P}) \mathcal{P}\subseteq \mathcal{Q}(\mathcal{P})$.
    Therefore, by $[\mathcal{Q}(\mathcal{P}), \mathcal{P}]\subseteq \mathcal{Q}(\mathcal{P})$, Lemma~\ref{lmab} and equation~\eqref{eqt}, we deduce 
    $$Q_{ax}^2 = P_aQ_xP_aQ_x + P_xQ_aP_aQ_x + P_aQ_xP_xQ_a + P_xQ_aP_xQ_a = 0.$$
    Moreover, $P_{ax}^2 = P_{a^2x^2} = 0$. Hence, $\mathcal{Q}(\mathcal{P}) \mathcal{P}\subseteq \mathcal{Q}(\mathcal{P})$. 
    
    Finally, we show $\mathcal{Q}(\mathcal{P})$ is a vector space. For $a,b\in \mathcal{Q}(\mathcal{P})$, we have 
    $$P_{a+b}^2 = P_{a^2} + 2P_{ab} + P_{b^2} = 0 \qquad \textrm{ and }\qquad Q_{a+b}^2 = Q_a^2 + Q_aQ_b + Q_bQ_a + Q_b^2 = 0,$$
    since $[\mathcal{Q}(\mathcal{P}), \mathcal{P}]\subseteq \mathcal{Q}(\mathcal{P})$. From here, we derive that $\mathcal{Q}(\mathcal{P})$ is a vector space. Then Lemma~\ref{lmab}, together with $[\mathcal{Q}(\mathcal{P}), \mathcal{P}]\subseteq \mathcal{Q}(\mathcal{P})$ and $\mathcal{Q}(\mathcal{P}) \mathcal{P}\subseteq \mathcal{Q}(\mathcal{P})$ imply $\mathcal{Q}(\mathcal{P})$  is a zero ideal. 
\end{proof}

\begin{coro}
    Let $\mathcal{P}$ be a Poisson $A$-algebra. Then $\mathcal{Q}(\mathcal{P})$ is the unique maximal zero ideal.
\end{coro}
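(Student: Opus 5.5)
The corollary asserts two things: $\mathcal{Q}(\mathcal{P})$ is itself a zero ideal, and every zero ideal of $\mathcal{P}$ lies inside it. The first part is exactly the preceding proposition, so only the containment remains. Uniqueness of the maximal zero ideal then follows at once: if $I$ is any maximal zero ideal, then $I\subseteq\mathcal{Q}(\mathcal{P})$, and since $\mathcal{Q}(\mathcal{P})$ is a zero ideal, maximality forces $I=\mathcal{Q}(\mathcal{P})$.

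For the containment, let $I$ be a zero ideal of $\mathcal{P}$, meaning $I$ is a Poisson ideal with $I\cdot I=0=[I,I]$, and take $x\in I$. We must show $P_x^2=0$ and $Q_x^2=0$ on $\mathcal{P}$. Fix $y\in\mathcal{P}$. Because $I$ is an ideal, $xy\in I$ and $[x,y]\in I$. Then
\[
P_x^2(y)=x\cdot(xy)\in I\cdot I=0
\qquad\text{and}\qquad
Q_x^2(y)=[x,[x,y]]\in[I,I]=0.
\]
Hence $x\in\mathcal{Q}(\mathcal{P})$, and so $I\subseteq\mathcal{Q}(\mathcal{P})$.

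There is no real obstacle in this argument. All the substantive work, namely showing that $\mathcal{Q}(\mathcal{P})$ is closed under addition, is a Poisson ideal, and has zero products, was done in the preceding proposition, and that is where the standing assumptions on the characteristic and the $A$-algebra hypothesis are used. The containment step itself uses nothing beyond the ideal property and the vanishing of both products on $I$.
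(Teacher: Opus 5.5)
Your proposal is correct and is essentially the paper's proof: invoke the preceding proposition to get that $\mathcal{Q}(\mathcal{P})$ is a zero ideal, then show any zero ideal $I$ lies in $\mathcal{Q}(\mathcal{P})$ via $x(xy)\in I\cdot I=0$ and $[x,[x,y]]\in[I,I]=0$. Your explicit deduction of uniqueness from this containment is a small step the paper leaves implicit.
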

\begin{proof}
By the preceding proposition, $\mathcal Q(\mathcal P)$ is a zero ideal. If $I$ is any zero ideal and $a\in I$, then, for every $x\in\mathcal P$, we have
$$P_a^2(x)=a(ax)=0,\qquad Q_a^2(x)=[a,[a,x]]=0,$$
since $ax,[a,x]\in I$ and $I$ is a zero algebra. Thus $I\subseteq\mathcal Q(\mathcal P)$.
\end{proof}

\subsection{Solvable Poisson $A$-algebras} We conclude the section with an application to solvable Poisson
$A$-algebras, showing that some classical properties of solvable Lie
$A$-algebras admit a natural Poisson analogues.
 
\begin{lemma}\label{cartan} If $C$ is a Cartan subalgebra of a Poisson $A$-algebra $\mathcal{P}$ in which $\mathcal{P}_A$ is nilpotent, and $\mathcal{P}=C\dot{+} P_1$ is the Fitting decomposition of $\mathcal{P}_L$ relative to $Q_C$, then $C\cdot \mathcal{P}_1\subseteq \mathcal{P}_1$. Moreover, if $\mathcal{P}_1$ is a Lie ideal of $\mathcal{P}_L$, it is an ideal of $\mathcal{P}$.
\end{lemma}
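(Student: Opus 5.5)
The plan is to reduce everything to the fact that $C$ is a zero algebra, and then use that multiplication operators $P_x$ with $x\in C$ commute with all of $Q_C$. We work in finite dimension, since a Fitting decomposition is assumed. The Fitting one-component is
$$\mathcal P_1=\bigcap_{k\geq1}Q_C^k(\mathcal P),\qquad Q_C^k(\mathcal P)=\operatorname{span}\{Q_{c_1}\cdots Q_{c_k}(\mathcal P): c_i\in C\},$$
and $C$ is the Fitting null component.

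\emph{Step 1: $C$ is a zero algebra.} A Cartan subalgebra is nilpotent and self-normalising, hence maximal nilpotent in $\mathcal P_L$. Since $\mathcal P_A$ is nilpotent, the proposition on maximal nilpotent subalgebras (the one following Proposition~\ref{lie2}) shows $C$ is stable, i.e.\ a Poisson subalgebra. By Proposition~\ref{propnil} (or simply because it is Lie nilpotent with $\mathcal P_A$ nilpotent, Lemma~\ref{lemLs}) it is a nilpotent Poisson subalgebra. Since $\mathcal P$ is an $A$-algebra, $C$ is therefore a zero algebra: $[C,C]=0$ and $C\cdot C=0$.

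\emph{Step 2: $C\cdot\mathcal P_1\subseteq\mathcal P_1$.} The Leibniz rule gives $Q_cP_x=P_xQ_c+P_{[c,x]}$. For $c,x\in C$, Step 1 gives $[c,x]=0$, so $P_x$ commutes with every $Q_c$. Hence
$$P_x\bigl(Q_{c_1}\cdots Q_{c_k}(\mathcal P)\bigr)=Q_{c_1}\cdots Q_{c_k}(x\mathcal P)\subseteq Q_C^k(\mathcal P)$$
for every $k$. Intersecting over $k$ gives $x\mathcal P_1\subseteq\mathcal P_1$.

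\emph{Step 3: $\mathcal P_1$ is an ideal when it is a Lie ideal.} Since $\mathcal P=C\dotplus\mathcal P_1$ and Step 2 holds, it remains to show $\mathcal P_1\cdot\mathcal P_1\subseteq\mathcal P_1$. In characteristic $\neq2$, Lemma~\ref{cor33} gives $\mathcal P_A^2=0$, so this is trivial; in that case Step 2 is also trivial.

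In general (characteristic $2$), pass to $\overline{\mathbb F}$. Since $C$ is abelian, $\mathcal P$ splits into generalized weight spaces $\mathcal P_\alpha$ for $Q_C$, with $\mathcal P_0=C$ and $\mathcal P_1=\bigoplus_{\alpha\neq0}\mathcal P_\alpha$. Each $Q_c$ is a derivation of the associative product, so the standard binomial identity (as in the Engel subalgebra case) yields $\mathcal P_\alpha\cdot\mathcal P_\beta\subseteq\mathcal P_{\alpha+\beta}$. The only possible failure is therefore a component of $\mathcal P_\alpha\cdot\mathcal P_{-\alpha}$ landing in $C$.

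To rule this out, take $u\in\mathcal P_\alpha$, $v\in\mathcal P_{-\alpha}$ and write $v=\sum_i[c_i,v_i]$ with $c_i\in C$ and $v_i\in\mathcal P_{-\alpha}$; this is possible because $Q_C$ acts surjectively on the non-zero weight spaces. The Leibniz rule gives
$$u[c,v_i]=[c,uv_i]-[c,u]v_i .$$
The first term lies in $[C,\mathcal P]\subseteq\mathcal P_1$ (as $[C,C]=0$). The second has the same shape as $uv$, but with $u$ replaced by $[c,u]$, which again lies in $\mathcal P_1$ because $\mathcal P_1$ is a Lie ideal. Iterating $k$ times replaces $u$ by an element of $Q_C^k(u)$ modulo $\mathcal P_1$.

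I expect this step to be the main obstacle, because the iteration does not terminate on weight grounds alone. To close it, I would combine it with the associative filtration: by the proposition giving $\mathcal P_A^4=0$ and $(\mathcal P_A^2)_L^2=0$, the products $\mathcal P_1\cdot\mathcal P_1\subseteq\mathcal P_A^2$ lie in a Lie-abelian space. I would then argue by downward induction on $j$ that
$$\mathcal P_1\cdot\mathcal P_1\cap\mathcal P_A^j\subseteq\mathcal P_1 .$$
The base case is $j=4$, where the intersection is $0$. For the inductive step, Step 2 shows $C$ acts on $\mathcal P_A^2$ compatibly, and the $C$-component of a product in $\mathcal P_A^j$ is killed by $Q_C$, which lets the inductive hypothesis absorb it.
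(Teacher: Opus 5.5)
Your Steps 1 and 2 are correct and match the paper. The paper also gets $[C,C]=0$ from stability of maximal nilpotent subalgebras, and its chain $C\cdot\mathcal P_1=C\cdot[C,\mathcal P_1]\subseteq[C,C\cdot\mathcal P_1]\subseteq\mathcal P_1$ is your commutation $P_xQ_c=Q_cP_x$ in another form. The characteristic $\neq 2$ case of Step 3 is indeed trivial by Lemma~\ref{cor33}.

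\textbf{The gap.} It is in Step 3 in characteristic $2$, the only case where the second assertion has content. Your rewriting $u[c,v_i]=[c,uv_i]-[c,u]v_i$ is circular, as you say. The proposed downward induction on $\mathcal P_A^j$ is not an argument: every element of $C$ is killed by $Q_C$ (since $[C,C]=0$), so saying that the $C$-component of a product is killed by $Q_C$ gives no information. Write $\pi_C$ for the projection onto $C$ along $\mathcal P_1$. Your identity only shows that the pairing $(u,v)\mapsto\pi_C(uv)$ on $\mathcal P_1$ is $Q_C$-invariant, and invariance alone does not force a pairing $\mathcal P_\alpha\times\mathcal P_{-\alpha}\to C$ to vanish. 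Note also that the hypothesis $[\mathcal P_1,\mathcal P_1]\subseteq\mathcal P_1$ never enters your argument. Your appeal to ``Lie ideal'' for $[c,u]\in\mathcal P_1$ only uses $Q_C$-invariance, which is a sign that the key step is missing.

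\textbf{The missing idea.} Put the Cartan element in the associative slot of the Leibniz rule. For $c\in C$ and $u,v\in\mathcal P_1$,
\[
[c,u]\cdot v=[c\cdot v,u]-c\cdot[v,u].
\]
By your Step 2, $c\cdot v\in\mathcal P_1$, so $[c\cdot v,u]\in[\mathcal P_1,\mathcal P_1]\subseteq\mathcal P_1$ by the Lie ideal hypothesis. Also $c\cdot[v,u]\in C\cdot\mathcal P_1\subseteq\mathcal P_1$, by Step 2 again. The Fitting chain $Q_C^k(\mathcal P)$ stabilizes at $\mathcal P_1$, so $\mathcal P_1=[C,\mathcal P_1]$. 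Hence $\mathcal P_1\cdot\mathcal P_1=[C,\mathcal P_1]\cdot\mathcal P_1\subseteq\mathcal P_1$, in every characteristic. This needs no scalar extension, no weight spaces and no associative filtration, and it is exactly the paper's argument.
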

\begin{proof} We have that $[C,C]=0$, by Proposition~\ref{lie2}, and $[C,\mathcal{P}_1]=\mathcal{P}_1$, so $$C\cdot \mathcal{P}_1=C\cdot [C,\mathcal{P}_1]\subseteq [C,C\cdot \mathcal{P}_1]\subseteq [C,C+\mathcal{P}_1]\subseteq \mathcal{P}_1.$$ If $\mathcal{P}_1$ is a Lie ideal of $\mathcal{P}_L$, then $[\mathcal{P}_1,\mathcal{P}_1]\subseteq \mathcal{P}_1$, so $$\mathcal{P}_1\cdot \mathcal{P}_1=\mathcal{P}_1\cdot [C,\mathcal{P}_1]\subseteq [C\cdot \mathcal{P}_1,\mathcal{P}_1]+C\cdot [\mathcal{P}_1,\mathcal{P}_1]\subseteq \mathcal{P}_1,$$ and $\mathcal{P}_1$ is an ideal of $\mathcal{P}$.
\end{proof}

It follows from Corollary \ref{cor33} that the results in \cite{Aalg} hold for solvable Poisson algebras over a field of characteristic different from two. However, the results generally do not require this restriction on the characteristic of the field, despite the fact that the example given in the remark after Corollary \ref{cor33} is solvable. The proofs only require minor changes to those employed for Lie algebras. We'll demonstrate this by proving just one of them.

\begin{theor} Let $\mathcal{P}$ be a finite-dimensional  solvable Poisson $A$-algebra. Then $Z(\mathcal{P})\cap \mathcal{P}^2=0$.
\end{theor}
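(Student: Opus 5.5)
The plan is to argue by induction on $\dim\mathcal P$, adapting the Lie-algebra argument of \cite{Aalg}. Here $\mathcal P^2=\mathcal P\cdot\mathcal P+[\mathcal P,\mathcal P]$. Suppose, for a contradiction, that $Z(\mathcal P)\cap\mathcal P^2\neq0$, and pick $0\neq z\in Z(\mathcal P)\cap\mathcal P^2$. Because $z$ annihilates $\mathcal P$ under both products, every subspace of $Z(\mathcal P)$ is a Poisson ideal. So $B=\mathbb Fz$ is a one-dimensional ideal with $B\subseteq Z(\mathcal P)\cap\mathcal P^2$.

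\emph{Case $B\not\subseteq\phi(\mathcal P)$.} Then there is a maximal subalgebra $M$ with $\mathcal P=B+M$, and since $\dim B=1$ we get $\mathcal P=B\oplus M$. Since $B$ is central, $[\mathcal P,M]=[M,M]\subseteq M$ and $\mathcal P\cdot M=M\cdot M\subseteq M$, so $M$ is an ideal. Moreover $\mathcal P^2=M\cdot M+[M,M]\subseteq M$. This gives $B\subseteq\mathcal P^2\cap B\subseteq M\cap B=0$, a contradiction.

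\emph{Case $B\subseteq\phi(\mathcal P)$.} This is the main obstacle. By \cite[Theorem 4.1]{nsfp}, $\phi(\mathcal P)$ is a nilpotent ideal and hence lies in the nilradical $N$ of $\mathcal P$.

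\begin{enumerate}
\item I would first show that $N$ is a zero algebra. It is a nilpotent subalgebra, so this follows directly from the $A$-property. Consequently $N\subseteq\mathcal Q(\mathcal P)$.
\item Next, choose a Cartan subalgebra $C$ of $\mathcal P_L$ and the Fitting decomposition $\mathcal P=C\dotplus\mathcal P_1$ relative to $Q_C$.
  \begin{itemize}
  \item By Proposition~\ref{charsolv}, $\mathcal P_A$ is nilpotent, so Proposition~\ref{lie2} shows that $C$ is a nilpotent Poisson subalgebra. It is therefore a zero algebra.
  \item $C$ contains $Z(\mathcal P)$, since central elements lie in the Fitting null component.
  \item By Lemma~\ref{cartan}, $C\cdot\mathcal P_1\subseteq\mathcal P_1$, and also $[C,\mathcal P_1]=\mathcal P_1$.
  \end{itemize}
\item The aim is then to prove $\mathcal P^2\subseteq\mathcal P_1$. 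This would give $z\in C\cap\mathcal P_1=0$, the desired contradiction. Since $C$ is a zero algebra, $C\cdot C+[C,C]=0$, and the terms $C\cdot\mathcal P_1$ and $[C,\mathcal P_1]$ already lie in $\mathcal P_1$. So it remains to control $\mathcal P_1\cdot\mathcal P_1+[\mathcal P_1,\mathcal P_1]$.
\item Following the Lie case, I would first try to show that $\mathcal P_1$ is a Lie ideal. The second part of Lemma~\ref{cartan} would then give that it is a Poisson ideal, and we are done.
  \begin{itemize}
  \item For this I would pass to $\mathcal P/B$, which is a solvable Poisson $A$-algebra by Lemma~\ref{l:fac}.
  \item By induction, $Z(\mathcal P/B)\cap(\mathcal P/B)^2=0$.
  \item Combined with $B\subseteq\phi(\mathcal P)$, I would then compare the Fitting decompositions of $\mathcal P$ and $\mathcal P/B$ with respect to the image of $C$, which is a Cartan subalgebra of the quotient because $B\subseteq Z(\mathcal P)\subseteq C$.
  \end{itemize}
\end{enumerate}

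If this direct route stalls, I would instead use the Lie-theoretic result for solvable Lie $A$-algebras, namely that $N$ is abelian and complemented by $C$, together with the finer structure from Lemma~\ref{cor33} and the proposition on $\mathcal P_A^3$. The goal would be to show that $\phi(\mathcal P)=0$ for solvable Poisson $A$-algebras, which makes the second case vacuous. This step, ruling out a nonzero central element inside the Frattini ideal, is where I expect the real work to lie; everything else reduces to the short computations above.
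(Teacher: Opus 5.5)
\paragraph{Where the proposal falls short.} Your Case~1 is correct. Case~2, however, carries the entire content of the theorem, and there you only state goals without giving an argument. Both goals are also problematic.

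Since $C$ is a zero algebra, you have $\mathcal P^2=\mathcal P_1+\mathcal P_1\cdot\mathcal P_1+[\mathcal P_1,\mathcal P_1]$. So your target $\mathcal P^2\subseteq\mathcal P_1$ is equivalent to $\mathcal P^2\cap C=0$, which is strictly stronger than the theorem. In characteristic $0$ it does hold: $\mathcal P_A^2=0$ by Lemma~\ref{cor33}, and $\mathcal P_1=[C,\mathcal P_1]\subseteq\mathcal P_L^2\subseteq N$, which is abelian. That would even give a proof with no Frattini case split.

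The theorem, however, is stated over an arbitrary field, and in characteristic $p>0$ the target fails for solvable $A$-algebras. Take $\mathcal P=\mathrm{span}(x,y,v_0,\dots,v_{p-1})$ with zero associative product and brackets $[x,y]=y$, $[x,v_i]=iv_i$, $[y,v_i]=v_{i+1}$ for $i<p-1$, $[y,v_{p-1}]=v_0$, $[v_i,v_j]=0$.
\begin{itemize}
\item $V=\mathrm{span}(v_i)$ is an abelian ideal with $\mathcal P/V\cong\mathfrak{aff}_1$.
\item $Q_x|_V$ is semisimple with one-dimensional kernel, $Q_y^p=\mathrm{id}$ on $V$, and $Q_{x+\beta y}^p-Q_{x+\beta y}=\beta^p\,\mathrm{id}$ on $V$. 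From this one checks that every nilpotent subalgebra is abelian.
\item $C=\mathrm{span}(x,v_0)$ is a Cartan subalgebra with $\mathcal P_1=\mathrm{span}(y,v_1,\dots,v_{p-1})$, yet $[y,v_{p-1}]=v_0\in C$.
\end{itemize}
So this step could only succeed by using the hypothetical central element in an essential way. ``Comparing Fitting decompositions of $\mathcal P$ and $\mathcal P/B$'' gives no indication of how to do that.

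Your fallback fails even in characteristic $0$. The algebra $\mathrm{span}(x,n_1,n_2)$ with zero product, $[x,n_1]=n_1$ and $[x,n_2]=n_1+n_2$, is a solvable Poisson $A$-algebra. Its maximal subalgebras are $\mathrm{span}(n_1,n_2)$ and $\mathrm{span}(x+v,n_1)$ for $v\in\mathrm{span}(n_1,n_2)$, so $\phi(\mathcal P)=\mathbb F n_1\neq0$. Likewise, $N$ need not be complemented by $C$: consider $\mathfrak{aff}_1\oplus\mathbb F$.

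\paragraph{The missing idea.} You need to use the inductive hypothesis on more than $\mathcal P/\mathbb F z$. The paper takes a minimal counterexample and proceeds as follows.
\begin{itemize}
\item It applies minimality, via Lemma~\ref{l:fac}, to $\mathcal P/A$ for \emph{every} minimal ideal $A$. This forces $Z(\mathcal P)\cap\mathcal P^2\subseteq A$ for all such $A$, so $\mathcal P$ is monolithic with monolith $Z(\mathcal P)=\mathbb F z$.
\item It applies minimality to a maximal ideal $M$, which is a smaller solvable $A$-algebra, to get $Z(M)\cap M^2=0$. Since $M^2$ is an ideal of $\mathcal P$ and would otherwise contain $z$, this gives $M^2=0$.
\item Now $\mathcal P=M+\mathbb F x$, and the paper uses the Fitting decomposition of the single operator $Q_x$ rather than that of a Cartan subalgebra. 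Its one-component lies in $M$ and is automatically an ideal, because $M^2=0$ and $P_x$ commutes with $Q_x$. It therefore cannot be nonzero without containing $z$, which lies in the null component.
\item Hence $Q_x$ is nilpotent, so $\mathcal P_L$ is nilpotent, so $\mathcal P$ is nilpotent by Lemma~\ref{lemLs}. By the $A$-property $\mathcal P$ is then a zero algebra, contradicting $0\neq z\in\mathcal P^2$.
\end{itemize}
The reduction to $M^2=0$ is exactly what makes the Fitting one-component an ideal. That is the point where your argument stalls.
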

\begin{proof}  Let $\mathcal{P}$ be a minimal counter-example and let $A$ be a minimal ideal of $\mathcal{P}$. Then $A$ is a zero algebra, $\mathcal{P}\cdot A=0$ and $[\mathcal{P},A]=A$ or $0$. Now $$\frac{Z(\mathcal{P})+A}{A}\cap \frac{\mathcal{P}^2+A}{A}\subseteq Z\left(\frac{\mathcal{P}}{A}\right)\cap \left(\frac{\mathcal{P}}{A}\right)^2=\frac{A}{A}.$$ If $[\mathcal{P},A]=A$, then $A\subseteq \mathcal{P}^2$ and $Z(\mathcal{P})\cap \mathcal{P}^2\subseteq A$; if $[\mathcal{P},A]=0$, then $A\subseteq Z(\mathcal{P})$ and  $Z(\mathcal{P})\cap \mathcal{P}^2\subseteq A$ again. Hence, if $A_1$, $A_2$ are minimal ideals of $\mathcal{P}$, $Z(\mathcal{P})\cap \mathcal{P}^2\subseteq A_1\cap A_2=0$. It follows that $\mathcal{P}$ is monolithic with monolith $A=Z(\mathcal{P})=\mathbb{F}z$, by the minimality.
\par

Now let $M$ be a maximal ideal of $\mathcal{P}$. Then $Z(M)\cap M^2=0$ by the minimality of $\mathcal{P}$, so $Z(\mathcal{P})\not\subseteq M^2$, whence $M^2=0$. It follows that $\mathcal{P}=M+\mathbb{F}x$ for some $x\in \mathcal{P}$ and $M^2=0$. Let $\mathcal{P}=\mathcal{P}_0\dot{+}\mathcal{P}_1$ be the Fitting decomposition relative to  $Q_x$. Then $\mathcal{P}_1\subseteq M$ and $[\mathcal{P}_0,\mathcal{P}_1]+\mathcal{P}_0\cdot \mathcal{P}_1\subseteq \mathcal{P}_1$ as in Lemma \ref{cartan}, so $\mathcal{P}_1$ is an ideal of $\mathcal{P}$. If $\mathcal{P}_1\neq 0$, then $Z(\mathcal{P})\subseteq \mathcal{P}_0\cap \mathcal{P}_1=0$, so $\mathcal {P}_1=0$. It follows that $\mathcal{P}_L$ is nilpotent and hence that $\mathcal{P}$ is nilpotent. Since $\mathcal{P}$ is an $A$-algebra, this means that, $\mathcal{P}$ is a zero algebra, a contradiction.
\end{proof}

\end{document}